\newcommand{\cvd}{\hfill\ensuremath{\blacksquare}}
\newtheorem{thm}{Theorem}[section]
\newtheorem{lem}[thm]{Lemma}
\newtheorem{prop}[thm]{Proposition}
\newtheorem{cor}[thm]{Corollary}
\newtheorem{df}[thm]{Definition}
\newtheorem{rem}[thm]{Remark}
\newcommand{\hO}{\mathcal{O}}
\newcommand{\A}{\mathcal{A}}
\newcommand{\B}{\mathcal{B}}
\renewcommand{\E}{\mathcal{E}}
\newcommand{\OO}{\mathcal{A}}
\newcommand{\HH}{\mathcal{H}}
\newcommand{\U}{\mathcal{U}}
\newcommand{\Uq}[2]{U_q(\mathfrak{#1}(#2))}
\newcommand{\R}{\mathbb{R}}
\newcommand{\C}{\mathbb{C}}
\newcommand{\CP}{\mathbb{C}\mathrm{P}}
\newcommand{\N}{\mathbb{N}}
\newcommand{\Z}{\mathbb{Z}}
\newcommand{\Q}{\mathbb{Q}}
\newcommand{\mL}{\mathcal{L}}
\newcommand{\de}{\partial}
\newcommand{\deb}{\bar{\partial}}
\newcommand{\dd}{\mathrm{d}}
\newcommand{\inner}[1]{\left<#1\right>}
\newcommand{\mc}[1]{\mathcal{#1}}
\renewcommand{\bar}{\overline}
\newcommand{\tr}{\mathrm{Tr}}
\newcommand{\id}{\mathtt{id}}
\newcommand{\ket}[1]{\left|#1\right>}
\newcommand{\ma}[1]{\bigg(\begin{array}{cc}#1\end{array}\bigg)}
\newcommand{\az}{\triangleright}
\newcommand{\za}{\triangleleft}
\newcommand{\wprod}{\wedge_q\mkern-1mu}
\newcommand{\dwprod}{\,\dot\wedge_q}
\newcommand{\dotimes}{\,\dot{\otimes}\,}
\newcommand{\vol}{\mathbf{\Phi}}
\newcommand{\kahl}{\mathbf{\Omega}_q}
\begin{document}

\title*{Geometry of Quantum Projective Spaces}

\author{Francesco D'Andrea\inst{1}\and Giovanni Landi\inst{2}}
 
\institute{%
Dipartimento di Matematica e Applicazioni,
Universit\`a di Napoli Federico II, P.le Tecchio 80, I-80125 
Napoli, Italy \\[2pt]
\textit{Email address:} \texttt{francesco.dandrea@unina.it}
\vspace{5pt}\and
Dipartimento di Matematica, Universit\`a di Trieste,
Via A.~Valerio 12/1, I-34127 Trieste, Italy, and INFN, Sezione di Trieste, Trieste, Italy \\[2pt]
\textit{Email address:} \texttt{landi@units.it}}
\maketitle

\bigskip
\noindent {\bf Abridged table of contents}: \\ ~\\
\S\ref{intro} Introduction \\
\S\ref{qsc} The quantum $SU(n+1)$ and $\CP^n$ \\
\S\ref{sec:3} K-theory and K-homology \\
\S\ref{sec:equivST} Dirac operators and spectral triples \\
\S\ref{plncm} The projective line $\CP^1_q$ as a noncommutative manifold \\
\S\ref{sec:Cal} A digression: calculi and connections \\
\S\ref{sec:5} The complex structure of $\CP^n_q$ \\
\S\ref{sec:6} Monopoles and instantons on $\CP^2_q$ \\
\S\ref{app} On Chern characters and Fredholm modules 

\vfill
\noindent {\bf Acknowledgements}.
F.D. was partially supported by the italian ``Progetto FARO 2010'' and by the ``Progetto Giovani GNFM 2011''
(INDAM, Italy). G.L. was partially supported by the Italian Project ``Cofin08 -- Noncommutative Geometry,
Quantum Groups and Applications".  

\newpage


\section{Introduction}\label{intro}
In recent years, several quantizations of real manifolds
have been studied, in particular from the point of view of Connes' noncommutative
geometry \cite{Con94}. Less is known for complex noncommutative
spaces. A natural first step in developing a theory is clearly the study
of quantizations of flag manifolds --- and, in particular, complex
projective spaces ---, thus explaining the increasing
interest for this class of examples.
In this paper, we review various aspects of the geometry of
deformations of complex projective spaces.

Some references on these topics are the following.
For Fredholm modules and classical characteristic classes,
as well as equivariant K-theory and quantum characteristic classes,
one can see \cite{HS02,DL09a,DL09b};
differential calculi have been studied by several authors,
e.g.~\cite{CHZ96,Wel00,Kra04,HK04,HK06b,Bua11a,Bua11b};
for Dirac operators and spectral triples we refer to
\cite{DS03,Kra04,DDL08b,DD09,DL09a};
complex structures and positive cyclic cocycles have been
studied in \cite{KLS10,KM11a,KM11b};
for monopoles and instantons in the $4$-dimensional case,
we refer to \cite{DL09b,DL11}.
The quantum projective line has been also used as
the ``internal space'' for a scheme of equivariant dimensional
reduction leading to $q$-deformations of systems of non-abelian
vortices in \cite{LS10}.
In the (complex) $1$-dimensional case, in \cite{Wag09} there is a study of some of
the ``seven axioms'' of noncommutative geometry.

An original part of the present work is a proof of rational Poincar\'e
duality for a new family of real spectral triples, generalizing
the one in \cite{Wag09}.
Other original results include: the computation in \S\ref{sec:hodge} of the cohomology
of the Dolbeault complex of $\CP^n_q$; in \S\ref{se:cpnhs} we give in full details an easier and complete
proof of a dimension formula (Cor.~4.2 of \cite{KM11b}) for the zero-th cohomology of holomorphic modules;
in \S\ref{sec:exist} we exhibit $n+1$ positive Hochschild twisted cocycles that, in \S\ref{sec:5.4}
we pair with equivariant K-theory, thus also showing the  pairwise inequivalence of the projections 
in Prop.~\ref{prop:psigma}.  

\smallskip
\noindent {\bf Notations}.
We shall have $0<q\leq 1$ as deformation parameter,
with $q=1$ corresponding to the ``classical limit''.
The $q$-analogue of an integer number $n$ is defined as
$ [n]: =(q^n-q^{-n})/(q-q^{-1})$
%
for $q\neq 1$ and equals $n$ in the limit $q\to 1$. 
For any $n\geq 1$, the $q$-factorial is 
$
[n]!:=[n][n-1]\ldots [1] 
$,
with $[0]!:=1$, and, for $j_0,\ldots,j_n$ interger numbers, the $q$-multinomial coefficients is
$$
[j_0,\ldots,j_n]!:=\frac{[j_0+\ldots+j_n]!}{[j_0]!\ldots[j_n]!} \;.
$$
By \mbox{$*$-algebra} we shall always mean an unital involutive associative algebra over
the complex numbers, and by representation of a \mbox{$*$-algebra} we always mean a unital $*$-representation,
unless otherwise stated. For a coproduct we use Sweedler notation,
$\Delta(x)=x_{(1)}\otimes x_{(2)}$, with summation understood.

\section{The quantum $SU(n+1)$ and $\CP^n$}\label{qsc}

\subsection{Quantized `coordinate rings'}\label{sec:2.2}
In the framework of $C^*$-algebras, the compact quantum groups $SU_q(n)$,
for $n\geq 2$, have been introduced in \cite{Wor88}. It is well known that any
compact quantum group has a dense subalgebra which is a Hopf \mbox{$*$-algebra}
with the induced coproduct. This Hopf \mbox{$*$-algebra} is the analogue of the algebra 
of representative functions of a compact group. For $SU_q(n)$ it will be denoted by $\OO(SU_q(n))$;
it has been studied in \cite{FRT88} (among others) and the exact definition can be found for 
example~in \cite[Sect.~9.2]{KS97}.
Here we recall that, for any $n_1>n_2$, there is a surjective
Hopf \mbox{$*$-algebra} morphism $\OO(SU_q(n_1))\to \OO(SU_q(n_2))$ 
or, in other words, $SU_q(n_2)$ is a ``quantum subgroup'' of $SU_q(n_1)$.

For $n\geq 1$, the ``quotient''  $SU_q(n+1)/SU_q(n)$ leads to the so-called
odd-dimensional quantum spheres $S^{2n+1}_q$, the natural ambient
space when studying quantum projective spaces. More precisely,
the coordinate algebra  $\OO(S^{2n+1}_q)$ is defined as the $*$-subalgebra
of $\OO(SU_q(n+1))$ made of coinvariant elements for the coaction
of $SU_q(n)$.
As an abstract \mbox{$*$-algebra}, this is generated by $2(n+1)$ 
elements $\{z_i,z_i^*\}_{i=0}^n$ with commutation relations \cite{VS91}:
\begin{gather*}
z_iz_j =q^{-1}z_jz_i  \;, \quad \;0\leq i<j\leq n \; \qquad \textup{and} \qquad 
z_i^*z_j =qz_jz_i^*  \;, \quad \;i\neq j \;, \\
[z_n^*,z_n]=0 \qquad \textup{and} \qquad 
[z_i^*,z_i] =(1-q^2)\sum\nolimits_{j=i+1}^n z_jz_j^* 
\;, \quad i=0,\ldots,n-1 \;, \\
\intertext{and sphere condition: }
z_0z_0^*+z_1z_1^* +\ldots+z_nz_n^*=1 \;.
\end{gather*}
Using the commutation relations above, an equivalent way to write the sphere condition is 
$\sum_{j=0}^n q^{2j}z_j^* z_j=1$. 
In the case $n=1$, one has $SU_q(1)=\{1\}$ and the quantum sphere $S^3_q$ is the `manifold'  
underlying the quantum 
$SU(2)$ group. The generator $z_0=\alpha$ and $z_1=\beta$ of the algebra $\OO(S^3_q)$ can be assembled into a matrix
$$
U:=\ma{\alpha & \beta \\ -q\beta^*\; & \alpha^*} \;.
$$
The defining relations are then encoded
in the condition $UU^*=U^*U=I_2$, where $I_2$ is the $2\times 2$ identity
matrix. With standard `matrix' coproduct, counit
and antipode, this gives the well known quantum group $SU_q(2)$ of 
\cite{Wor79,Wor87}.

The original notations of \cite{VS91} are obtained by setting $q=e^{\hbar/2}$;
the generators of \cite{DD09} correspond to the replacement $z_i\to z_{n+1-i}$,
while the generators $x_i$ of~\cite{HL04} are related to ours by
$x_i=z_{n+1-i}^*$ and by the replacement $q\to q^{-1}$.

For any $n\geq 1$, the $*$-subalgebra of $\OO(S^{2n+1}_q)$ generated by
$p_{ij}:=z_i^*z_j$ will be denoted $\OO(\CP^n_q)$, and called 
the algebra of `polynomial functions' on the quantum projective space $\CP^n_q$. 
The algebra $\OO(\CP^n_q)$ is made of invariant elements for the $U(1)$ action 
$ z_i  \to \lambda z_i $ for $\lambda \in U(1)$.

From the relations of $\OO(S^{2n+1}_q)$ one gets analogous quadratic
relations for $\OO(\CP^n_q)$ \cite{DL09a}.
In particular, the elements $p_{ij}$ are the matrix entries of a projection $P= (p_{ij})$, i.e. $P^2=P=P^*$ or 
$\sum_{j=0}^n p_{ij} p_{jk} = p_{ik}$ and $p_{ij}^*=p_{ji}$. 
This projection has  $q$-trace:
\begin{equation}\label{q-tr}
\tr_q(P):=\sum\nolimits_{i=0}^n \, q^{2i} p_{ii}=1.
\end{equation}
For $n=1$, $\CP^1_q$ is also a deformation of the unit sphere $S^2$
known as the ``standard'' Podle\'s quantum sphere \cite{Pod87}.

A further generalization is given by quantum weighted projective spaces
$\mathbb{WP}_q(k_0,k_1,\ldots,k_n)$, where $\{k_i\}_{i=0}^n$
are pairwise coprime numbers. The corresponding coordinate algebra is the
fixed point subalgebra of $\OO(S^{2n+1}_q)$ for the action $z_i\mapsto\lambda^{k_i}z_i$
of $U(1)$. For $n=1$, these are called ``quantum teardrops'' and studied
in \cite{BS11}. Their discussion is beyond the scope of this review.

\subsection{Symmetry algebras}\label{sec:2.1}
Let $\A$ be a \mbox{$*$-algebra}, $(\U,\epsilon,\Delta,S)$ a Hopf \mbox{$*$-algebra}, One says that $\A$ is a left $\U$-module \mbox{$*$-algebra} if there is a left action `$\az$' of $\U$ on $\A$ such
that
$$
x\az ab=(x_{(1)}\az a)(x_{(2)}\az b)\;,\quad
x\az 1=\varepsilon(x)1\;,\quad
x\az a^*=\left( S(x)^*\az a \right)^*\;,
$$
for all $x\in\U$, $a,b\in\A$. 
With this data, one defines the left crossed product algebra $\A\rtimes\U$. i.e.~the
$*$-algebra generated by $\A$ and $\U$ with crossed commutation relations
$xa=(x_{(1)}\az a)x_{(2)}$, for all $x\in\U$ and $a\in\A$. There are analogous notions of a 
right $\U$-module \mbox{$*$-algebra} and right crossed product algebra.

Symmetries of $SU_q(n+1)$ and of related quotient spaces are described by
the action of the dual Hopf \mbox{$*$-algebra}, here denoted by $\Uq{su}{n+1}$. 
This is the `compact' real form of the Hopf
algebra denoted $\breve{U}_q(\mathfrak{sl}(n+1,\C))$ in \S6.1.2 of \cite{KS97}.
Left and right canonical (and commuting) actions of $\Uq{su}{n+1}$ on $\OO(SU_q(n+1))$ will be denoted by $\az$ 
and $\za$ respectively.  

If $h:\OO(SU_q(n+1))\to\C$ is the Haar state,
i.e.~the unique invariant state on the algebra, a inner product  
is defined as usual by $\inner{a,b}:=h(a^*b)$. It turns out that the left
action is unitary for this inner product, that is
$\inner{a,x\az b}=\inner{x^*\az a,b}$ for all $a,b\in\OO(SU_q(n+1))$
and $x\in\Uq{su}{n+1}$. The right action is not unitary,
but it can be turned into a second unitary left action $\mL$,
commuting with the former one, via the rule
$$
\mL_xa:=a\za S^{-1}(x) \;.
$$
The algebra $\OO(S^{2n+1}_q)$ can be identified with the $*$-subalgebra
of $\OO(SU_q(n+1))$ fixed by the $\mL$-action of the Hopf $*$-subalgebra
$\Uq{su}{n}\subset\Uq{su}{n+1}$; whereas the projective space $\OO(\CP^n_q)$
is the $*$-subalgebra of $\OO(S^{2n+1}_q)$ fixed by a further $\mL$-action
of a classical Lie algebra $\mathfrak{u}(1)$.

As the two left actions commute, both $\OO(S^{2n+1}_q)$ and $\OO(\CP^n_q)$
are themselves  left $\Uq{su}{n+1}$-module \mbox{$*$-algebra}s for the action `$\az$'.

Let us give few more details for the $n=1$ case needed later on,
while we refer to the literature for the $n>1$ case.
The Hopf \mbox{$*$-algebra} $\Uq{su}{2}$ is generated by
$K=K^*,K^{-1},E,F=E^*$ with relations
$$
KEK^{-1}=qE\;,\qquad
[E,F]=(K^2-K^{-2})/(q-q^{-1})\;,
$$
and coproduct/counit/antipode defined by
$$
\begin{array}{c}
\Delta K=K\otimes K\;,\quad \Delta E=E\otimes K+K^{-1}\otimes E\;, 
\\ \rule{0pt}{3.5ex}\epsilon(K)=1\;,\quad \epsilon(E)=0\;,\\ \rule{0pt}{3.5ex}
S(K)=K^{-1}\;,\quad S(E)=-qE\;.
\end{array}
$$
One passes to the notations of \cite{DLS05} with the change $e=-F$, $f=-E$, $k=K$.
The right canonical action is given, on generators $\alpha,\beta$ of $\OO(SU_q(2))$, by
\begin{align*}
\alpha\za K &=q^{\frac{1}{2}}\alpha\;, &
\alpha\za E &=-q\beta^* \;,&
\alpha\za F &=0 \;,\\
\beta\za K &=q^{\frac{1}{2}}\beta\;, &
\beta\za E &=\alpha^* \;,&
\beta\za F &=0 \;.
\end{align*}
We finally need recalling that the representation theory of $\Uq{su}{2}$ is well known 
(cf.~\cite[Thm.~13]{KS97}). In particular, we are interested in
the irreducible representations in which $K$ has positive spectrum: these 
are labelled by an \emph{integer} $n\in\N$ with the representation space 
$V_n$ of dimension $n+1$. In each of these the Casimir element of $\Uq{su}{2}$, 
\begin{equation}\label{Cas}
\mathcal{C}_q=\left(\frac{\smash[t]{q^{\frac{1}{2}}}K-\smash[t]{q^{-\frac{1}{2}}K^{-1}}}{q-q^{-1}}\right)^2+FE \; ,
\end{equation}
has value $\mathcal{C}_q\big|_{V_n}=[\tfrac{n+1}{2}]^2\, \id_{V_n}$.


\section{K-theory and K-homology}\label{sec:3}

\subsection{Equivariant modules and representations}\label{sec:3.0}
Similarly to the construction of equivariant vector bundles  
associated to a principal bundle on a manifold, here we construct modules --- that we interpret as 
sections of virtual `noncommutative equivariant vector bundles' --- as follows.
Let $\sigma:\Uq{u}{n}\to\mathrm{End}(\C^k)$ be a $*$-representation.
The analogue of (sections of) the vector bundle associated to $\sigma$ is the
$\OO(\CP^n_q)$-module 
$\E (\sigma)$ of elements of $\OO(SU_q(n+1))\otimes\C^k$ which are $\Uq{u}{n}$-invariant for the
Hopf tensor product of the actions $\mL$ and $\sigma$. That is, $\psi\in
\OO(SU_q(n+1))\otimes \C^k$ belongs to $\E(\sigma)$ if and only if
\begin{equation}\label{eq:Esigma}
(\mL_{x_{(1)}} \otimes \sigma(x_{(2)}))\psi=\epsilon(x) \psi \;,
\qquad\forall\;x\in\Uq{u}{n} \;.
\end{equation}
As this set is stable under (left and right) multiplication by an
$\Uq{u}{n}$-
element of $\OO(SU_q(n+1))$, one has that $\E(\sigma)$ is an
$\OO(\CP^n_q)$-bimodule. It is a left $\OO(\CP^n_q)\rtimes\Uq{su}{n+1}$-module as well,
due to the `$\az$' and $\mL$ actions commuting.

Of particular importance are `line bundles', --- bimodules $\E(\sigma)$
coming from one-dimensional representations of $\Uq{u}{n} \simeq\Uq{su}{n}\oplus U(\mathfrak{u}(1))$,
non-trivial only on the $\mathfrak{u}(1)$.
Since the fixed point algebra $\OO(SU_q(n+1))^{\Uq{su}{n}}$ coincides with
$\OO(S^{2n+1}_q)$, (section of) noncommutative
line bundles can be equivalently described as associated to the noncommutative
$U(1)$-principal bundle $S^{2n+1}_q\to\CP^n_q$ via an irreducible
representation of $U(1)$. These are labelled by $N\in\Z$, and the general
line bundle, that we denote by $\Gamma_N$, is given in \S4 of \cite{DD09}.
They are all finitely generated and projective (as one-sided modules), as we shall
explain in detail in \S\ref{sec:3.2}. Note that $\Gamma_0=\OO(\CP^n_q)$. 

The expressions are particularly simple for $n=1$. In this case
\begin{equation}\label{eq:GammaN}
\Gamma_N=\big\{ a\in\OO(SU_q(2))\,\big|\,\mL_K(a)=q^{\frac{N}{2}}a\big\} \;.
\end{equation}
As a left $\Uq{su}{2}$-module, we have a decomposition
(cf.~\S2.2 of \cite{Wag09}, where $\Gamma_N$ is denoted $M_{-N}$):
\begin{equation}\label{deco}
\Gamma_N\simeq\bigoplus_{n-|N|\in 2\N}V_n \;,
\end{equation}
where $V_n$ is the irreducible representation of dimension $n+1$ mentioned before.

An $\OO(\CP^n_q)$-valued Hermitian structure on $\E(\sigma)$ is obtained
by restriction of the canonical Hermitian structure of
$\OO(SU_q(n+1))\otimes \C^k$, that is
$$
(\psi,\eta)_{\E(\sigma)}:=\sum\nolimits_{i=1}^k\psi_i^*\eta_i \;,
$$
for all $\psi=(\psi_1,\ldots,\psi_k)$ and
$\eta=(\eta_1,\ldots,\eta_k)$, with $\psi_i,\eta_i\in\OO(SU_q(n+1))$.

If instead of `representative functions' $\OO(SU_q(n+1))$ one works
with the associated universal $C^*$-algebra $C(SU_q(n+1))$  
of `continuous functions', the above construction yields a full right
Hilbert module over the $C^*$-algebra $C(\CP^n_q)$. Note that
left multiplication by $a\in C(\CP^n_q)$ satisfy
\begin{equation}\label{eq:two}
(a\psi,a\psi)_{\E(\sigma)}=\sum\nolimits_{i=1}^k\psi_i^*a^*a\psi_i 
\leq \|a\|^2\sum\nolimits_{i=1}^k\psi_i^*\psi_i =\|a\|^2(\psi,\psi)_{\E(\sigma)}
\;,
\end{equation}
since $a^*a\leq\|a\|^2$ and conjugation with elements of a $C^*$-algebra
preserves the positivity of an operator. Thus $\E(\sigma)$ is a Morita
equivalence bimodule between $C(\CP^n_q)$ and $\mathrm{End}_{C(\CP^n_q)}\E(\sigma)$
(cf.~\cite[App.~A.3 and A.4]{Lan02}).

By composing the Hermitian structure with the Haar state one gets a pre-Hilbert space
with inner product
\begin{equation}\label{eq:inform}
\inner{\psi,\psi'}:=h\circ(\psi,\psi')_{\E(\sigma)} \;.
\end{equation}
From \eqref{eq:two}, it follows 
$\inner{a\psi,a\psi'}\leq\|a\|^2\inner{\psi,\psi}$, so that one has
a bounded representation of $C(\CP^n_q)$ on the Hilbert space
completion of each of these equivariant modules.
These are the representations used in \S\ref{sec:equivST} for the construction of
covariant differential calculi and equivariant spectral triples 
on $\CP^n_q$.

\subsection{K-theory}\label{sec:3.2}
At the $C^*$-algebra level, by viewing $C(\CP^n_q)$ as the Cuntz--Krieger algebra of a graph \cite{HS02} one proves that $K_0(C(\CP^n_q))\simeq\Z^{n+1}$ (and
$K_1(C(\CP^n_q)) = 0$). The group $K_0$ is given as the cokernel of the incidence matrix canonically associated with the graph. The dual result for K-homology is obtained in an analogous way with the group $K^0$ being the kernel of the transposed matrix \cite{Cun84}; this leads to $K^0(C(\CP^n_q))=\Z^{n+1}$ (and $K^1(C(\CP^n_q))=0$).

Somewhat implicitly, in \cite{HS02} there appear generators of the $K_0$ groups of $C(\CP^n_q)$ as projections in $C(\CP^n_q)$ itself. In \cite{DL09a}
we gave generators of $K_0(C(\CP^n_q))$ in the form of `polynomial functions', so they represent elements of $K_0(\OO(\CP^n_q))$ as well. These latter generators are constructed as follows.

With $N\in\Z$, denote by $\Psi_N=(\psi^N_{j_0,\ldots,j_n})$ the vector-valued `function' on $S^{2n+1}_q$ with $\binom{|N|+n}{n}$ components given by:
\begin{subequations}
\begin{align}
\psi^N_{j_0,\ldots,j_n} &:=[j_0,\ldots,j_n]!^{\frac{1}{2}}q^{-\frac{1}{2}\sum_{r<s}j_rj_s}
(z_0^*)^{j_0}\ldots (z_n^*)^{j_n} \;, &\mathrm{if}\;N\geq 0\;, \\[4pt]
\psi^N_{j_0,\ldots,j_n} &:=[j_0,\ldots,j_n]!^{\frac{1}{2}}q^{\frac{1}{2}\sum_{r<s}j_rj_s+\sum_{r=0}^nrj_r}
z_0^{j_0}\ldots z_n^{j_n} \;,&\mathrm{if}\;N<0\;,\label{psiN}
\end{align}
\end{subequations}
and labeled by non-negative integers satisfying $j_0+\ldots+j_n=|N|$. 
Then $\Psi_N^\dag\Psi_N=1$ and 
\begin{equation}\label{pn}
P_N:=\Psi_N\Psi_N^\dag
\end{equation} 
is a projection: $(P_N)^2=P_N=(P_N)^\dag\,$; the proof is in \cite{DD09,DL09a}, and is a generalization of the case $n=2$ in \cite{DL09b}.
In particular $P_1=P$ is the `defining' projection of the algebra $\OO(\CP^n_q)$ of \S\ref{sec:2.2}. 
As we will see in \S\ref{sec:3.2}, the group $K_0$ is generated by the classes of $P_0,P_{-1},\ldots,P_{-n}$.

The components of $\Psi_N$ are a generating family for $\Gamma_N$ as
a left module, as shown in \S4.1 of \cite{DD09}; hence $\Gamma_N$ is
finitely generated and projective as a left module, and the corresponding
projection is $P_N$. Also it is not difficult to prove that
a generating family for $\Gamma_N$ as a right module, is given by the components of $\Psi^\dag_{-N}$;
hence $\Gamma_N$ is finitely generated and projective as a right module too, with corresponding projection $P_{-N}$. For $n=2$, this is Prop.~3.3 of \cite{DL09b} 
(what we call here $\Gamma_N$, following \cite{DD09}, is denoted $\Sigma_{0,-N}$ in \cite{DL09b}).

The projections $P_N$ are `equivariant' in the following sense.
For an homogeneous space, the equivariant $K^0$-group can be defined as the Grothendieck group of the abelian monoid whose elements are equivalence classes of equivariant vector bundles. It has an algebraic version, denoted $K_0^{\U}(\A)$ where $\U$ is a Hopf \mbox{$*$-algebra} and $\A$ a $\U$-module \mbox{$*$-algebra},  
valid in the non-commutative case as well.
Equivariant vector bundles are replaced by one sided (left, say)
$\A\rtimes\U$-modules which are finitely generated and projective as (left) $\A$-modules; these will be simply called ``equivariant projective modules''. Any such a module is given by a pair $(p,\sigma)$, where $p$ is a $k\times k$ idempotent with entries in $\A$, and 
$\sigma:\U\to\mathrm{Mat}_k(\C)$ is a representation with the following compatibility requirement 
satisfied (see e.g.~\cite[Sect.~2]{DDL08}):
\begin{equation}\label{eq:cov}
(x_{(1)}\az p)  \sigma(x_{(2)})^t=\sigma(x)^t p \;, \qquad \textup{for all} \quad x\in\U\;,
\end{equation}
with `$\phantom{|}^t$' denoting transposition.
The corresponding module $\E=\A^kp$ is made of row vectors elements $v=(v_1,\ldots,v_k)\in\A^k$ 
in the range of the idempotent, $vp=v$, with module structures
$$
(a.v)_i:=av_i\;,\qquad
(x.v)_i:=\sum\nolimits_{j=1}^k(x_{(1)}\az v_j)\sigma_{ij}(x_{(2)})\;, \quad i=1, \dots k \;,
$$
for all $a\in\A$ and $x\in\U$.
An equivalence between two equivariant modules is simply an invertible left $\A\rtimes\U$-module map between them. The group $K_0^{\U}(\A)$ is defined as the Grothendieck group of the abelian monoid whose elements are equivalence classes of $\U$-equivariant projective $\A$-modules; the monoid operation is the direct sum, as usual.

There is an isomorphism $\Gamma_N\simeq\OO(\CP^n_q)^{k_{N,n}}P_N$, with $k_{N,n}={\binom{|N|+n}{n}}$, 
so that $P_N$ are candidates to represent elements in equivariant K-theory.
In fact, it is more convenient to take idempotents $P_N'=R_NP_N R_N^{-1}$, that
are conjugated to $P_N$ through the diagonal matrix $R_N$ having component
$$
q^{\frac{1}{2}\sum_{i=1}^ni(n+1-i)(j_{i-1}-j_i)}
=q^{\frac{1}{2}\sum_{i=0}^n(n-2i)j_i}
$$
in position $(j_0,\ldots,j_N)$.
The need to use idempotents that are not self-adjoint is explained
in Lemma 2.7 of \cite{DDL08}: the module map
$\Gamma_N\to\OO(\CP^n_q)^{k_{N,n}} P_N$ is not unitary,
while the map $\Gamma_N\to\OO(\CP^n_q)^{k_{N,n}}P_N'$ is.
For $n=1$, these are exactly the projections in \cite[Eq.~(33)]{Wag09}.

\begin{prop}\label{prop:psigma}
The pair $(P'_N,\sigma^N)$ is the representative of an element in $K_0^{\Uq{su}{n+1}}(\OO(\CP^n_q))$. Here,
$\sigma^N$ is the irreducible representation 
with highest weight $(N,0,\ldots,0)$ if $N\geq 0$, or
with highest weight $(0,\ldots,0,-N)$ if $N<0$.
\end{prop}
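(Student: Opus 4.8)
The plan is to verify the covariance identity \eqref{eq:cov} directly for the pair $(P'_N,\sigma^N)$, reducing it to an analogous identity for the ``bare'' objects $(P_N,\sigma^N)$ and the known equivariance of the components $\psi^N_{j_0,\ldots,j_n}$ of $\Psi_N$. Recall that $P_N=\Psi_N\Psi_N^\dag$, so $(P_N)_{IJ}=\psi^N_I(\psi^N_J)^*$ for multi-indices $I=(i_0,\ldots,i_n)$, $J=(j_0,\ldots,j_n)$ with $|I|=|J|=|N|$. The first step is to compute how $\Uq{su}{n+1}$ acts on a single component $\psi^N_I$ via the left action `$\az$'. Since each $\psi^N_I$ is (up to a scalar) a product of $|N|$ factors all equal to $z_r^*$ (for $N\geq 0$) or $z_r$ (for $N<0$), and since $\OO(S^{2n+1}_q)$ is a left $\Uq{su}{n+1}$-module $*$-algebra, the action on $\psi^N_I$ is governed by the action on the generators $z_r,z_r^*$ together with the coproduct. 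The key structural fact — which is exactly why the $\Psi_N$ were defined with those particular $q$-power normalizations — is that the span of $\{\psi^N_I : |I|=|N|\}$ is a $\Uq{su}{n+1}$-submodule of $\OO(S^{2n+1}_q)$ isomorphic to the dual of the irreducible representation with highest weight $(N,0,\ldots,0)$ (resp.\ $(0,\ldots,0,-N)$); equivalently, there is a matrix-valued function $x\mapsto\rho^N(x)$ such that $x\az\psi^N_J=\sum_I \psi^N_I\,\rho^N_{IJ}(x)$, and $\rho^N$ is (a twist/transpose of) $\sigma^N$. I would extract this from the fact, recalled in \S\ref{sec:3.0}, that the components of $\Psi_N$ generate the line bundle $\Gamma_N$, whose $\Uq{su}{n+1}$-module structure is the one displayed in \eqref{deco} for $n=1$ and in \S4 of \cite{DD09} in general.

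Granting that, the second step is bookkeeping: plug $(P_N)_{IJ}=\psi^N_I(\psi^N_J)^*$ into the left-hand side of \eqref{eq:cov} and use the module-$*$-algebra axiom for products, namely $x\az(ab)=(x_{(1)}\az a)(x_{(2)}\az b)$, to get
\begin{equation*}
x_{(1)}\az(P_N)_{IJ}=\sum_{K,L}\psi^N_K\,\rho^N_{KI}(x_{(1)})\,(\psi^N_L)^*\,\overline{\rho^N_{LJ}}(S(x_{(2)})^*)\,,
\end{equation*}
and then contract with $\sigma^N(x_{(2)})^t$ on the right. Using $\Delta^{(2)}$ and the defining relation of $\sigma^N$ as a $*$-representation together with the antipode identity $x_{(1)}S(x_{(2)})=\varepsilon(x)$, the two inner sums telescope and one is left with $\sum_{K}\psi^N_K(\psi^N_L)^*\,[\,\sigma^N(x)^t\,]_{\cdot}$, i.e.\ exactly $\sigma^N(x)^t P_N$. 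The precise matching of which of $\rho^N$, its transpose, its $S$-twist appears is the part requiring care, and it is precisely here that the self-adjoint $P_N$ fails the naive covariance and the conjugated $P'_N=R_NP_NR_N^{-1}$ is needed: conjugation by the diagonal matrix $R_N$ absorbs the discrepancy between $\rho^N$ and the genuine $*$-representation $\sigma^N$. So the third step is to repeat the contraction for $P'_N$: writing $(x_{(1)}\az P'_N)\sigma^N(x_{(2)})^t = R_N\,(x_{(1)}\az P_N)\,R_N^{-1}\sigma^N(x_{(2)})^t$ (the entries of $R_N$ are scalars, hence fixed by $\az$), one checks that $R_N^{-1}\sigma^N(x)^tR_N$ is the twisted representation that does satisfy the bare identity, so the whole thing collapses to $\sigma^N(x)^tP'_N$. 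For $n=1$ this is literally the computation behind \cite[Eq.~(33)]{Wag09}, which can be cited as a consistency check.

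Finally, I would remark that \eqref{eq:cov} is precisely the compatibility condition that makes $\E=\OO(\CP^n_q)^{k_{N,n}}P'_N$ a left $\OO(\CP^n_q)\rtimes\Uq{su}{n+1}$-module with the structure maps displayed after \eqref{eq:cov}; combined with the already-established fact (\S\ref{sec:3.2}) that $P'_N$ is an idempotent with entries in $\OO(\CP^n_q)$ and that $\OO(\CP^n_q)^{k_{N,n}}P'_N\simeq\Gamma_N$ is finitely generated projective as a left module, this shows $(P'_N,\sigma^N)$ represents a class in $K_0^{\Uq{su}{n+1}}(\OO(\CP^n_q))$, proving the proposition. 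The main obstacle is the first step — pinning down the exact matrix $\rho^N$ implementing the `$\az$'-action on the span of the $\psi^N_I$ and identifying it, after the $R_N$-twist, with $\sigma^N(\cdot)^t$; everything after that is an application of the Hopf-algebra axioms. This identification is, however, essentially contained in the construction of the $\Gamma_N$ and their decomposition into irreducibles in \cite{DD09}, so I would lean on that reference rather than redo the weight computation from scratch.
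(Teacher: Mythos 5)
Your overall strategy is the same as the paper's: identify the span of the $\psi^N_J$ as the irreducible $\Uq{su}{n+1}$-module, then verify \eqref{eq:cov} by a direct Hopf-algebraic computation in which the conjugation by $R_N$ repairs the failure of covariance for the self-adjoint $P_N$. However, there is a genuine gap at what is in fact the heart of the argument, and you have misplaced where the difficulty lies. The identification of the matrix $\rho^N$ (the paper's $\widetilde{\sigma}^N$ in \eqref{eq:sigmaN}) is the easy part: one exhibits the highest weight vector $(z_0)^{-N}$ (resp.\ a product of $z^*$'s) and matches dimensions via the Weyl character formula. What you leave as ``one checks'' --- that conjugation by $R_N$ absorbs the discrepancy --- is precisely the step that does not follow from general Hopf-algebra axioms and requires the specific form of $R_N$.

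Concretely: after using $x\az\Psi_N^\dag=\bigl(S(x)^*\az\Psi_N\bigr)^\dag$ and contracting, the matrices appear in the order $\sigma^N(x_{(1)})^t\,P\,\sigma^N(S(x_{(2)}))^t\,\sigma^N(x_{(3)})^t$, and since transposition reverses products this is $\sigma^N(x_{(1)})^t\,P\,\sigma^N\bigl(x_{(3)}S(x_{(2)})\bigr)^t$. The identity you invoke, $x_{(1)}S(x_{(2)})=\epsilon(x)$, does not apply here: in a non-cocommutative Hopf algebra $x_{(3)}S(x_{(2)})$ does not collapse; one needs $x_{(3)}S^{-1}(x_{(2)})$, i.e.\ the \emph{inverse} antipode. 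This is exactly why the bare $P_N$ fails and why the particular diagonal matrix $R_N$ works: one must check that $K_{2\rho}\az\Psi_N=q^{\sum_i i(n+1-i)(j_{i-1}-j_i)}\Psi_N$, so that $R_N=\sigma^N(K_{2\rho}^{1/2})^t$ with $K_{2\rho}$ the element \eqref{eia} implementing the square of the antipode, whence $R_N^2\,\sigma^N(S(x))^t\,R_N^{-2}=\sigma^N(S^{-1}(x))^t$ (the paper's \eqref{eq:RN2}); only then does the chain telescope via $x_{(3)}S^{-1}(x_{(2)})\mapsto\epsilon(x_{(2)})x_{(3)}$. Your formulation --- that $R_N^{-1}\sigma^N(x)^tR_N$ ``satisfies the bare identity'', or that $R_N$ mediates between $\rho^N$ and a $*$-representation --- is not the right statement of what must be proved, and without the identification of $R_N$ with $\sigma^N(K_{2\rho}^{1/2})^t$ the verification cannot be completed. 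The concluding reduction (that \eqref{eq:cov} plus projectivity of $\OO(\CP^n_q)^{k_{N,n}}P_N'\simeq\Gamma_N$ yields a class in $K_0^{\Uq{su}{n+1}}(\OO(\CP^n_q))$) is fine and agrees with the paper.
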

\begin{proof}
Let us give the proof for $N<0$, the one for $N\geq 0$ being similar.
We write the components of $\Psi_N$ as $\psi^N_J$, where
$J=(j_0,\ldots,j_n)$ is a multi-index.

The explicit formul{\ae} for the action on $z_j$'s
are in \S4 of \cite{DD09}. One has
\begin{equation}\label{lact}
E_i\az z'_j=\delta_{i,j}z'_{j+1} \;,\qquad
F_i\az z'_j=\delta_{i,j+1}z'_i \;,\qquad
K_i\az z'_j=q^{\frac{1}{2}(\delta_{i+1,j}-\delta_{i,j})}z'_j \;,
\end{equation}
where $\{E_i,F_i,K_i\}$ are the generators of $\Uq{su}{n+1}$ and
$z'_j$ of $\OO(S^{2n+1}_q)$ in the notations of \cite{DD09}, with 
$i=1,\ldots,n$, and $j=1,\ldots,n+1$; 
we recall that our present notations differ from the ones in \cite{DD09}
for a replacement $z_i=z'_{n+1-i}$.

For a fixed $N$, let $V_N$ be the linear span of the components $\psi^N_J$ of $\Psi_N$. 
Then $V_N$ carry a representation of $\Uq{su}{n+1}$.
And $\psi^N_{-N,0,\ldots,0}=(z_0)^{-N}$ is the highest
weight vector of the representation $(0,\ldots,0,-N)$,
as $K_i\az(z_0)^{-N}=0$ for all $i\neq n$ and $K_n\az(z_0)^{-N}=q^{-\frac{1}{2}N}(z_0)^{-N}$, 
and $E_i\az (z_0)^{-N}=0$ for all $i$.
Hence the representation $\widetilde{\sigma}^N$ on $V_N$ defined by
\begin{equation}\label{eq:sigmaN}
x\az \psi^N_J=\sum\nolimits_{J'}\psi^N_{J'}\, \widetilde{\sigma}^N_{J',J}(x)
\end{equation}
contains the irreducible representation $(0,\ldots,0,-N)$.
Having the latter dimension $k_{N,n}=\binom{|N|+n}{n}$ by Weyl character
formula (cf.~Lemma 3.4 of \cite{DD09}), and being this 
the dimension of $V_N$, the two representations coincide.
Let
$$
\sigma^N(x):=R_N\, \widetilde{\sigma}^N(x)\, R_N^{-1} \;.
$$
In matrix notations, \eqref{eq:sigmaN} becomes
$x\az R_N\Psi_N=\sigma^N(x)^t\hspace{1pt}R_N\Psi_N$,
thinking of
$\Psi_N$ as a column vector and with row-by-column
multiplication understood. Also,
\begin{align*}
(x\az \Psi_N^\dag) \, R_N^{-1} &=
\big(S(x)^*\az \Psi_N\big)^\dag R_N^{-1}
=\big(\widetilde{\sigma}^N(S(x)^*)^t\Psi_N\big)^\dag R_N^{-1} \\
&=\Psi_N^\dag\widetilde{\sigma}^N(S(x))^tR_N^{-1}
=\Psi_N^\dag R_N\sigma^N(S(x))^t R_N^{-2}
 \;.
\end{align*}
Therefore
\begin{align*}
(x_{(1)}\az P_N')\sigma^N(x_{(2)})^t &=
(x_{(1)}\az R_N\Psi_N)(x_{(2)}\az \Psi_N^\dag R_N^{-1})\sigma^N(x_{(3)})^t \\
&=\sigma^N(x_{(1)})^t P_N' R_N^2\sigma^N(S(x_{(2)}))^tR_N^{-2}
\sigma^N(x_{(3)})^t \;.
\end{align*}
We need (also for later use in \S\ref{sec:exist}), the element $K_{2\rho}$
--- implementing the square of the antipode --- and given  in \cite[eq.~(3.2)]{DD09}:
\begin{equation}\label{eia}
K_{2\rho} = \left(K_1^n K_2^{2(n-1)} \ldots K_j^{j(n-j+1)} \ldots K_n^n  \right)^2 \;.
\end{equation}
For now, one readily checks that
$K_{2\rho}\az \Psi_N=
q^{\sum_{i=1}^ni(n+1-i)(j_{i-1}-j_i)}\Psi_N$,
so that $R_N=\sigma^N(K_{2\rho}^{\frac{1}{2}})^t$ and 
\begin{equation}\label{eq:RN2}
R_N^2\, \sigma^N(S(x))^t\, R_N^{-2}=\sigma^N(K_{2\rho}^{-1}S(x)K_{2\rho})
=\sigma^N(S^{-1}(x))^t
\end{equation}
for all $x\in\Uq{su}{n+1}$, by (3.3) of \cite{DD09}.
Thus,
\begin{align*}
(x_{(1)}\az P_N')\, \sigma^N(x_{(2)})^t
&=\sigma^N(x_{(1)})^t P_N' \sigma^N(S^{-1}(x_{(2)}))^t
\sigma^N(x_{(3)})^t \\
&=\sigma^N(x_{(1)})^t P_N' \sigma^N(x_{(3)}S^{-1}(x_{(2)}))^t \\
&=\sigma^N(x_{(1)})^t P_N'\epsilon(x_{(2)})=\sigma^N(x)^t P_N' \;,
\end{align*}
that is exactly \eqref{eq:cov}.
\cvd
\end{proof}


\subsection{Fredholm modules and Chern characters}\label{sec:3.1}
As we already mentioned, for $\CP^n_q$ the group $K^1$ is trivial.
Here we describe the group $K^0$, whose elements are represented by
Fredholm modules over $\OO(\CP^n_q)$. An even Fredholm
module $(\A,\HH,F,\gamma)$ over a \mbox{$*$-algebra} $\A$ is a $\Z_2$-graded Hilbert
space $\HH=\HH_+\oplus\HH_-$ (with grading operator $\gamma$), 
together with a graded representation $\pi=\pi_+\oplus\pi_-$
of $\A$ on $\HH$ and an odd bounded self-adjoint operator $F$ such that $F^2=1$
and $[F,\pi(a)]$ is a compact operator for all $a\in\A$.
If $[F,\pi(a)]$ is of trace class for all $a\in\A$, we say that the Fredholm
module is $1$-summable.  
The representation symbol will be usually omitted.

Among the generators of $K^0(\OO(\CP^n_q))$ there is one 
whose representation is faithful, which we call `top' Fredholm module
and describe firstly.

Let $\underline{m}=(m_1,\ldots,m_n)\in\N^n$ and
$\ket{\underline{m}}$ be the canonical orthonormal basis of $\ell^2(\N^n)$.
For $0\leq i<k\leq n$, we denote by $\underline{\varepsilon}^k_i\in\{0,1\}^n$
the array\vspace{-5pt}
$$
\underline{\varepsilon}_i^k:=
(\,\,\stackrel{i\;\mathrm{times}}{\overbrace{0,0,\ldots,0}}\,,
\stackrel{k-i\;\mathrm{times}}{\overbrace{1,1,\ldots,1}}\,,
\stackrel{n-k\;\mathrm{times}}{\overbrace{0,0,\ldots,0}}) \;.
$$

\begin{df}[\cite{DL09a}]
Let $0\leq k\leq n$ and $\mc{V}^n_k\subset\ell^2(\N^n)$ be the 
linear span of basis vectors $\ket{\underline{m}}$ satisfying the constraints
$0\leq m_1\leq m_2\leq\ldots\leq m_k$ and $m_{k+1}>m_{k+2}>\ldots>m_n\geq 0$,
with $m_0:=0$. For any $k>0$,
a representation $\pi_{n,k}:\OO(S^{2n+1}_q)\to\B(\ell^2(\N^n))$
is defined as follows.
We set $\pi_{n,k}(z_i)=0$ for all $i>k\geq 1$, while the remaining generators are
\begin{align*}
\pi_{n,k}(z_i)\ket{\underline{m}} &=q^{m_i}\sqrt{1-q^{2(m_{i+1}-m_i+1)}}
   \ket{\underline{m}+\smash[t]{\underline{\varepsilon}_i^k}}\;, && \textup{for} \quad 0\leq i\leq k-1 \;,\\
\pi_{n,k}(z_k)\ket{\underline{m}} &=q^{m_k}\ket{\underline{m}} \;,
\end{align*}
on the subspace $\mc{V}^n_k\subset\ell^2(\N^n)$, and they are zero on the orthogonal subspace. When $k=0$, we define $\pi_{n,0}(z_i)=0$ if $i>0$, 
while 
\begin{align*}
\pi_{n,0}(z_0)\ket{\underline{m}} &=\ket{\underline{m}} \;, && \textup{for} \quad m_1>m_2>\ldots>m_n\geq 0 , \\
\pi_{n,0}(z_0)\ket{\underline{m}}& =0 \;, && \textup{otherwise}.
\end{align*}
\end{df}
Each representaton $\pi_{n,k}$ is an irreducible $*$-representation of both
$\OO(S^{2n+1}_q)$ and $\OO(\CP^n_q)$ when restricted to $\mc{V}^n_k$, and is identically zero outside $\mc{V}^n_k$. Most importantly, if $|j-k|>1$ we have
\cite{DL09a}:
$$
\pi_{n,j}(a) \,\pi_{n,k}(b)=0 \;,\qquad\forall\;
a,b\in\OO(S^{2n+1}_q)\;.
$$ 
As a consequence, the maps $\pi_\pm:\OO(S^{2n+1}_q)\to\B(\ell^2(\N^n)))$, defined by
$$
\pi_+^{(n)}(a):=\sum_{\substack{ k\;\mathrm{even} \\ 0\leq k\leq n }}\pi_{n,k}(a) \;,\qquad
\pi_-^{(n)}(a):=\sum_{\substack{ k\;\mathrm{odd} \\ 0\leq k\leq n }}\pi_{n,k}(a) \;,
$$
are representations of the algebra $\OO(S^{2n+1}_q)$ and, by restriction,
of $\OO(\CP^n_q)$.

An even Fredholm module for $\OO(\CP^n_q)$ is obtained
with the representation $\pi_n:=\pi_+^{(n)}\oplus\pi_-^{(n)}$ on $\HH_n := 
\ell^2(\N^n)\oplus\ell^2(\N^n)$, obvious grading operator $\gamma_n$,
and
$$
F_n:=\ma{0\; & 1 \\ 1\; & 0} \;.
$$
Its $1$-summability follows from the proposition below \cite{DL09a}.

\begin{prop}\label{prop:3.3}
The difference $\pi_+^{(n)}(a)-\pi_-^{(n)}(a)$ is of trace class on $\HH_n$ for all $a\in\OO(\CP^n_q)$. Furthermore, the trace is given by a series which --- as a function of $q$ --- is absolutely convergent in the open interval $0<q<1$.
\end{prop}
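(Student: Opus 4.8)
The plan is to reduce everything to an explicit computation on the basis vectors $\ket{\underline{m}}$ of $\ell^2(\N^n)$ and to show that, for each generator $p_{ij}=z_i^*z_j$ of $\OO(\CP^n_q)$ (and hence for an arbitrary element, by the Leibniz-type bound below), the operator $\pi_+^{(n)}(a)-\pi_-^{(n)}(a)$ is diagonal up to finitely many "off-diagonal" contributions and its diagonal entries decay like $q^{2m}$ as the $m_i$ grow. First I would record that, because of the orthogonality property $\pi_{n,j}(a)\pi_{n,k}(b)=0$ for $|j-k|>1$, the difference $\pi_+^{(n)}(a)-\pi_-^{(n)}(a)$ for $a=p_{ij}$ only couples consecutive strata $\mc{V}^n_k$ and $\mc{V}^n_{k\pm1}$, so the trace splits as a finite sum $\sum_{k=0}^n(-1)^k \tr\big(\pi_{n,k}(p_{ij})\big)$ plus cross terms that I will argue are zero because the cross terms between $\mc{V}^n_k$ and $\mc{V}^n_{k\pm1}$ are genuinely off-diagonal (they shift $\underline{m}$ by a nonzero $\underline{\varepsilon}$) and hence contribute nothing to the trace. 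Thus the problem becomes: compute $\tr\big(\pi_{n,k}(p_{ij})\big)$ on $\mc{V}^n_k$ for each $k$, show each such trace is a convergent series in $0<q<1$, and show the alternating sum telescopes/cancels enough to remain convergent (indeed each individual piece is already trace class, so the alternating structure is only needed for the sharper statement, not for summability).

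The key computational step is: on $\mc{V}^n_k$, one has $\pi_{n,k}(p_{ij})\ket{\underline{m}} = \pi_{n,k}(z_i)^*\pi_{n,k}(z_j)\ket{\underline{m}}$, and from the explicit formulas $\pi_{n,k}(z_i)\ket{\underline{m}}=q^{m_i}\sqrt{1-q^{2(m_{i+1}-m_i+1)}}\,\ket{\underline{m}+\underline{\varepsilon}^k_i}$ for $i<k$ and $\pi_{n,k}(z_k)\ket{\underline{m}}=q^{m_k}\ket{\underline{m}}$, one sees $p_{ij}=z_i^*z_j$ maps $\ket{\underline{m}}\mapsto\ket{\underline{m}+\underline{\varepsilon}^k_j-\underline{\varepsilon}^k_i}$ up to a scalar; this is diagonal precisely when $i=j$ (or when $\underline\varepsilon^k_i=\underline\varepsilon^k_j$, i.e.\ when both $i,j\ge k$, in which case it vanishes anyway), and off-diagonal otherwise, contributing zero to the trace. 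So $\tr\big(\pi_{n,k}(p_{ij})\big)=\delta_{ij}\sum_{\underline{m}\in\mc{V}^n_k} q^{2m_i}(1-q^{2(m_{i+1}-m_i+1)})$ for $i<k$, with the obvious modification $q^{2m_k}$ at $i=k$ and $0$ for $i>k$. Each of these is a multiple geometric-type sum over the cone $0\le m_1\le\cdots\le m_k$, $m_{k+1}>\cdots>m_n\ge0$; changing variables to the gaps makes it a product of geometric series in powers of $q^2$, manifestly absolutely convergent for $0<q<1$, with poles only at $q=1$ coming from factors like $1/(1-q^{2\ell})$. Absolute convergence on the whole open interval is then immediate since each factor is a rational function of $q$ with no zeros of the denominator in $(0,1)$.

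The main obstacle I anticipate is not the convergence but the careful bookkeeping of the strata constraints: the regions $\mc{V}^n_k$ have mixed weak/strict inequalities ($0\le m_1\le\cdots\le m_k$ versus $m_{k+1}>\cdots>m_n\ge0$, with $m_0:=0$), and one must check that (i) the shift $\underline m\mapsto\underline m+\underline\varepsilon^k_i$ genuinely stays inside $\mc{V}^n_k$ (so the formulas are consistent), (ii) the boundary terms where $m_i=m_{i+1}$ make the square-root factor vanish, which is exactly what keeps the operator bounded and the trace finite, and (iii) the cross terms between adjacent strata really do vanish in the trace — this needs the observation that $\pi_{n,k\pm1}(z_\bullet)$ and $\pi_{n,k}(z_\bullet)$ have disjoint ranges within $\ell^2(\N^n)$ once restricted appropriately, so no diagonal matrix element survives. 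Once the geometry of these index sets is pinned down, summing a finite alternating collection of convergent multi-geometric series and invoking that a finite sum of trace-class operators is trace class completes the argument; the "absolutely convergent on $0<q<1$" clause follows by inspection of the resulting rational expression.
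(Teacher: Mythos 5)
Your reduction to the generators $p_{ij}$ via the ideal property of trace-class operators is sound, as is the identity $\pi_+^{(n)}(a)-\pi_-^{(n)}(a)=\sum_{k=0}^n(-1)^k\pi_{n,k}(a)$ (note there are no ``cross terms'' to dispose of here: each $\pi_{n,k}(a)$ maps $\mc{V}^n_k$ into itself, and the orthogonality $\pi_{n,j}(a)\pi_{n,k}(b)=0$ for $|j-k|>1$ is only needed to make $\pi_\pm^{(n)}$ homomorphisms, hence to justify checking on generators). The fatal step is the parenthetical claim that each $\pi_{n,k}(p_{ij})$ is separately trace class with a convergent stratum-wise trace. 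This is false: take $a=p_{00}=z_0^*z_0$ and any $k\geq 1$; since $m_0:=0$, the explicit formulas give $\pi_{n,k}(p_{00})\ket{\underline{m}}=\bigl(1-q^{2(m_1+1)}\bigr)\ket{\underline{m}}$ on $\mc{V}^n_k$, a diagonal operator whose eigenvalues tend to $1$, so it is not even compact, and your sum $\sum_{\underline{m}\in\mc{V}^n_k}\bigl(1-q^{2(m_1+1)}\bigr)$ diverges. The same divergence occurs for every $i<k$, because the summand $q^{2m_i}\bigl(1-q^{2(m_{i+1}-m_i+1)}\bigr)$ has no decay in the free coordinates $m_{i+1},\ldots,m_n$; the change of variables to gaps does not yield a convergent product of geometric series. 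Hence the proposed splitting of the trace into $\sum_k(-1)^k\,\tr\bigl(\pi_{n,k}(p_{ij})\bigr)$ is meaningless, and your remark that ``the alternating structure is only needed for the sharper statement, not for summability'' is exactly backwards: the alternating sum is the only reason the operator is trace class at all.

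Any correct argument must exploit cancellation between adjacent strata, which is what the proof cited in the paper (in \cite{DL09a}) relies on. For $i,j\leq k$ the shift $\underline{\varepsilon}^k_j-\underline{\varepsilon}^k_i$ is independent of $k$, so $\pi_{n,k}(p_{ij})$ and $\pi_{n,k\pm1}(p_{ij})$ are weighted shifts by the same vector whose coefficients agree up to corrections of order $q^{2m}$; only after pairing consecutive strata do the singular values become summable. (For a weighted shift, trace-classness means summability of the weights, so for $i\neq j$ you also cannot dismiss the off-diagonal parts by saying they contribute zero to the trace before trace-classness is established.) The simplest illustration is $n=1$: $\pi_{1,0}(p_{00})-\pi_{1,1}(p_{00})=\mathrm{diag}\bigl(q^{2(m+1)}\bigr)$, with trace $q^2/(1-q^2)$ --- the series that is absolutely convergent for $0<q<1$ arises precisely from such differences, never from a single $\pi_{n,k}$. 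Your bookkeeping of the index sets $\mc{V}^n_k$ would still be useful, but the estimate you actually need is that the matrix elements of $\sum_k(-1)^k\pi_{n,k}(p_{ij})$, after this pairing of strata, are dominated by geometrically decaying quantities summable over $\N^n$; as written, the proposal contains no mechanism producing that decay.
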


Additional $n$ Fredholm modules $(\OO(\CP^n_q),\HH_k,F_k,\gamma_k)$, $0\leq k< n$,
are obtained using the \mbox{$*$-algebra} morphism $\OO(\CP^n_q)\to\OO(\CP^k_q)$, restriction of the morphism $\OO(S^{2n+1}_q)\to\OO(S^{2k+1}_q)$ given by the map sending
to zero the generators $z_{k+1},z_{k+2},\ldots,z_n$.
With this map, one pull-backs the `top' Fredholm module of $\CP^k_q$
to $\CP^n_q$. For $k=0$, we set $\OO(\CP^0_q):=\C$ and
the `top' Fredholm module --- the generator of $K^0(\C)$ ---, is given by the
(non-unital) representation $\C \ni a\mapsto a\oplus 0$ on $\HH_0:=\mathbb{C}\oplus\mathbb{C}$,
with grading $\gamma_0=1\oplus -1$ and $F_0$ the operator interchanging the
two components, $F_0(x\oplus y)=y\oplus x$ for all $x,y\in\mathbb{C}$.

The pairing of the K-homology class $[F_k]$ of $(\OO(\CP^n_q),\HH_k,F_k,\gamma_k)$
with an element $[p]\in K_0(\OO(\CP^n_q))$ is given by:
$$
\inner{[F_k], [p]}=\tfrac{1}{2}\,\mathrm{Tr}(\gamma_kF_k[F_k,p]) \;.
$$
In particular, for $k=0$ this computes the dimension of the
fiber of the restriction of noncommutative vector bundles over $\CP_q^n$ to the
`classical point' of $\CP_q^n$ (given by the unique character of the algebra):
the computation yields the `rank' of the corresponding projective module.

\begin{prop}\label{q-ind}
For any $N\in\N$ and for all $0\leq k\leq n$,
the pairing between the K-theory classes $[P_{-N}]$ of the (line bundle)
projections $P_{-N}$ described in \S\ref{sec:3.2} and the K-homology classes $[F_k]$ is:
$$
\inner{[F_k],[P_{-N}]}=\tbinom{N}{k} \;,
$$
with $\binom{N}{k}:=0$ when $k>N$.
\end{prop}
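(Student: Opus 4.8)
I outline a proof: the plan is to reduce, by naturality, to the faithful `top' Fredholm module, and then to evaluate a single conditionally convergent operator trace.

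\emph{Reduction to $k=n$.} For $k<n$ the module $(\OO(\CP^n_q),\HH_k,F_k,\gamma_k)$ is, by construction, the pull-back along the surjection $\rho\colon\OO(\CP^n_q)\to\OO(\CP^k_q)$ killing $z_{k+1},\dots,z_n$ of the `top' Fredholm module of $\CP^k_q$, so that $[F_k]=\rho^{*}[F^{\mathrm{top}}_k]$ in $K^0$. On the other hand $\rho$ sends each component $\psi^{-N}_J$ of $\Psi_N$ with $j_{k+1}+\dots+j_n>0$ to $0$ and the others to the corresponding components of $\Psi_N$ on $S^{2k+1}_q$, so $\rho(P_{-N})$ is the $\CP^k_q$-projection $P_{-N}$ bordered by vanishing rows and columns; hence $\rho_{*}[P_{-N}]=[P_{-N}]$ in $K_0(\OO(\CP^k_q))$. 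By functoriality of the index pairing, $\inner{[F_k],[P_{-N}]}=\inner{[F^{\mathrm{top}}_k],[P_{-N}]}$, and it remains to prove, for every $n$, that $\inner{[F_n],[P_{-N}]}=\binom{N}{n}$ for the faithful module $(\OO(\CP^n_q),\HH_n,F_n,\gamma_n)$.

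\emph{Unwinding and diagonalising.} With $F_n$ the off-diagonal flip and $\gamma_n$ the grading, $\inner{[F_n],[p]}=\tfrac12\tr(\gamma_nF_n[F_n,p])$ collapses to $\inner{[F_n],[P_{-N}]}=\tr\big((\pi_+^{(n)}-\pi_-^{(n)})(P_{-N})\big)$, well defined by Proposition~\ref{prop:3.3}. Computing the $\C^{k_{N,n}}$-trace first gives $\tr_{\C^{k_{N,n}}}\pi_{n,j}(P_{-N})=\pi_{n,j}\big(\sum_{|J|=N}\psi^{-N}_J(\psi^{-N}_J)^{*}\big)$, in which, since $\pi_{n,j}(z_i)=0$ for $i>j$, only the $J$ supported in $\{0,\dots,j\}$ contribute. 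For such $J$ one checks that $\pi_{n,j}(\psi^{-N}_J)\ket{\underline m}=c_J(\underline m)\ket{\underline m+s_J}$ is a weighted shift by the fixed vector $s_J=\sum_{i<j}j_i\,\underline{\varepsilon}^{j}_{i}$; hence $\pi_{n,j}(\psi^{-N}_J)\pi_{n,j}(\psi^{-N}_J)^{*}$, and therefore $\tr_{\C^{k_{N,n}}}\pi_{n,j}(P_{-N})$, is diagonal in $\{\ket{\underline m}\}$, with entries $\lambda^{(j)}_{\underline m}=\sum_{J}|c_J(\underline m-s_J)|^{2}$ that vanish off $\mc{V}^n_j$ and on $\mc{V}^n_j$ depend only on $(m_1,\dots,m_j)$. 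One evaluates these in closed form from $\pi_{n,j}(z_iz_i^{*})\ket{\underline m}=q^{2m_i}(1-q^{2(m_{i+1}-m_i)})\ket{\underline m}$ for $i<j$ (with $m_0=0$), $\pi_{n,j}(z_jz_j^{*})\ket{\underline m}=q^{2m_j}\ket{\underline m}$, together with the $q$-multinomial constants in the $\psi^{-N}_J$.

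\emph{The summation.} By Proposition~\ref{prop:3.3} the operator $(\pi_+^{(n)}-\pi_-^{(n)})(P_{-N})=\sum_{j}(-1)^j\pi_{n,j}(P_{-N})$ is trace class, so its trace is $\sum_{\underline m}\langle\underline m|\sum_j(-1)^j\pi_{n,j}(P_{-N})|\underline m\rangle=\sum_{\underline m}\sum_{j:\,\underline m\in\mc{V}^n_j}(-1)^j\lambda^{(j)}_{\underline m}$, an absolutely convergent rearrangement. A short combinatorial check shows that every $\underline m$ on which some $\pi_{n,j}$ acts nontrivially lies in exactly two consecutive cells $\mc{V}^n_j,\mc{V}^n_{j+1}$, so the inner sum is a genuine difference $\pm\big(\lambda^{(j)}_{\underline m}-\lambda^{(j+1)}_{\underline m}\big)$, which by the formulae above decays geometrically in $\underline m$. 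It then remains to evaluate $\sum_{\underline m}\pm\big(\lambda^{(j(\underline m))}_{\underline m}-\lambda^{(j(\underline m)+1)}_{\underline m}\big)=\binom{N}{n}$, which I would do by iterated $q$-binomial summation; I expect this --- organising the cancellations and summing the resulting $q$-series --- to be the main obstacle. (For $n=1$, $\mc{V}^1_0=\mc{V}^1_1=\ell^2(\N)$ and $\lambda^{(0)}_m\equiv1$, so the identity becomes $\sum_{m\geq0}(1-\lambda^{(1)}_m)=N$, which for $N=1$ reads $\sum_{m\geq0}(1-q^2)q^{2m}=1$; the general case is the computation of \cite{DL09a}.)

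A conceptually cleaner alternative would avoid the trace altogether: establish in $K_0(\OO(\CP^n_q))$ the relation $[P_{-N}]=[P_{-1}]^{N}=\sum_k\binom{N}{k}u^k$, with $u:=[P_{-1}]-[P_0]$, using $\Gamma_a\otimes_{\OO(\CP^n_q)}\Gamma_b\cong\Gamma_{a+b}$ and nilpotency of $u$, and then recognise $\{[F_k]\}_{k=0}^{n}$ as the $K^0$-basis dual to $\{u^k\}$, the claim being equivalent to $\inner{[F_k],u^l}=\delta_{kl}$. This relies on the ring structure of $K_0$, not recalled in the present exposition, so I would carry out the direct computation above.
\cvd
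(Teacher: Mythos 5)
Your reduction to the top module is fine (the surviving components of $\Psi_N$ under the quotient map $\OO(\CP^n_q)\to\OO(\CP^k_q)$ are indeed the $\Psi_N$ of $S^{2k+1}_q$, so $\rho_*[P_{-N}]=[P_{-N}]$ and functoriality applies), the collapse of $\tfrac12\tr(\gamma_nF_n[F_n,p])$ to $\tr\big((\pi_+^{(n)}-\pi_-^{(n)})(P_{-N})\big)$ is correct, and your observation that every relevant $\ket{\underline m}$ lies in exactly two consecutive cells $\mc{V}^n_j,\mc{V}^n_{j+1}$ is true. But none of this is the hard part, and the hard part is exactly where you stop: you never evaluate the alternating sum of the diagonal entries $\lambda^{(j)}_{\underline m}$, you only announce that you ``would do'' an iterated $q$-binomial summation and expect it to be the main obstacle. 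As it stands the proposal establishes a setup, not the identity $\inner{[F_n],[P_{-N}]}=\binom{N}{n}$; the only case actually checked is $n=1$, $N=1$. The alternative route via the ring structure of $K_0$ and duality of the $[F_k]$ with powers of $u=[P_{-1}]-[P_0]$ is likewise only sketched and, as you note, rests on facts not established here.

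The device that closes this gap in the paper (with details in \cite{DL09a}) is different from, and much cheaper than, a direct $q$-series evaluation: by Prop.~\ref{prop:3.3} the trace is given by a series which is absolutely convergent for $0<q<1$, hence defines a continuous function of $q$ there; since the pairing is an integer for every $q$, it is constant in $q$, and one may compute it in the limit $q\to 0$. In that limit only finitely many terms of the series survive (the weighted shifts degenerate to partial isometries with explicit ranges), and counting the surviving terms yields $\binom{N}{k}$ directly, with no need to organize cancellations or sum $q$-binomial identities. If you want to complete your argument along your own lines you must actually carry out the summation you defer; otherwise, inserting the $q$-independence-plus-$q\to0$ argument at the point where you currently write ``it then remains to evaluate\dots'' is the missing idea.
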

For the proof in \cite{DL09a} one computes the pairing by evaluating the series giving the trace in
the $q\to 0$ limit, being the series absolutely convergent as in Prop.~\ref{prop:3.3}. 
For $q\to 0$ only finitely many terms survive, and the final result easily follows.
With the above result, we proved in \cite{DL09a} that the elements $[F_0],\ldots,[F_n]$ are generators of $K^0(\OO(\CP^n_q))$, and the elements $[P_0],\ldots,[P_{-n}]$ are generators of $K_0(\OO(\CP^n_q))$.
In particular, similarly to the classical situation, the K-theory is generated by
line bundles.


\section{Dirac operators and spectral triples}\label{sec:equivST}

\subsection{Regular spectral triples}
Spectral triples, or ``unbounded Fredholm modules'', provide a non-commuta\-ti\-ve generalization of the notion of closed Riemannian orientable (or spin$^c$) manifold \cite{Con94,Con08}.
A unital spectral triple is the datum $(\A,\HH,D)$ of a \mbox{$*$-algebra} $\A$ with a bounded representation $\pi:\A\to\B(\HH)$ on a Hilbert space $\HH$, and a selfadjoint operator $D$ on $\HH$ --- the `Dirac' operator --- with compact resolvent, such that $[D,\pi(a)]$ is bounded for all $a\in\A$.
The spectral triple is called \emph{even} if $\HH=\HH_+\oplus\HH_-$ is $\Z_2$-graded and,
for this decomposition, $\pi(\A)$ is diagonal while the operator $D$ is off-diagonal. 
We denote by $\gamma$ the grading operator, and set $\gamma=1$ when the spectral triple is \emph{odd} (no grading then). 
The compact resolvent requirement for the Dirac operator guarantees, for example, in the even case that 
the twisting of $D_\pm=D|_{\HH_\pm}$ with projections are Fredholm operators: a crucial property for the
construction of `topological invariants' via index computations \cite{Con94}.
If there is a $d\in\R^+$ such that $(1+D^2)^{-d/2}$ is in the Dixmier ideal $\mL^{(1,\infty)}(\HH)$,
the spectral triple is said to have ``metric dimension'' $d$ or to be $d$-summable (cf.~Chap.~4 of \cite{Con94}). 

While spectral triples correspond to spin$^c$ or orientable Riemannian manifolds,
\emph{real} spectral triples correspond to manifolds that are spin \cite{Con95}.
A spectral triple $(\A,\HH,D,\gamma)$ is \emph{real} if there is in addition an antilinear isometry $J:\HH\to\HH$, called
the \emph{real structure}, such that
\begin{equation}\label{eq:yyy}
J^2=\epsilon 1\;,\qquad JD=\epsilon' DJ\;,\qquad
J\gamma=\epsilon''\gamma J\;,
\end{equation}
and
\begin{equation}\label{eq:real}
[a,JbJ^{-1}]=0\;,\qquad
[[D,a],JbJ^{-1}]=0\;,
\end{equation}
for all $a,b\in\A$.
The signs $\epsilon$, $\epsilon'$ and $\epsilon''$ determine the KO-dimension (an integer modulo $8$)
of the triple \cite{Con95}.
In some examples (not in the present case) conditions \eqref{eq:real} have to be slightly relaxed 
(see for instance \cite{DLS05}).

As conformal structures are classes of (pseudo-)Rie\-mannian metrics,
similarly Fredholm modules are ``conformal classes'' of spectral triples
\cite{Con94,Bar07}: given
a spectral triple $(\A,\HH,D)$, a Fredholm module $(\A,\HH,F)$ can be obtained
by replacing $D$ with the bounded operator $F:=D(1+D^2)^{-\frac{1}{2}}$ (one can use $F:=D|D|^{-1}$ if $D$ is invertible), and viceversa any K-homology class has a representative that arises from a spectral triple through this construction \cite{BJ83}.
Passing from bounded to unbounded Fredholm modules is convenient since it allows to use powerful tools such 
as local index formul{\ae} \cite{CM95}. 

From now on we shall only consider spectral triples whose
representation is faithful, identify $\A$ with $\pi(\A)$ and omit  the representation symbol.
For the space $\CP_q^n$, we introduced in \cite{DL09a} even spectral
triples of any metric dimension $d\in\mathbb{R}^+$ whose conformal class is 
the top Fredholm module $(\OO(\CP^n_q),\HH_n,F_n,\gamma_n)$ in \S\ref{sec:3.1}. 
They are constructed by giving explicitly the spectral decomposition of the Dirac operator.
For example, an $n$-dimensional spectral triple is obtained by taking $D=|D|F_n$ on $\HH_n$ with 
 $$
|D|\ket{\underline{m}}:=(m_1+\ldots+m_n)\ket{\underline{m}} \;.
$$
The eigenvalues are $\pm\lambda$, for $\lambda\in\N$, with multiplicity $\binom{\lambda+n}{n-1}$. This is a polynomial in $\lambda$ of order $n-1$, and so the metric dimension is $n$, as claimed.

\subsection{Equivariant spectral triples}
When $\A$ is a $\U$-module \mbox{$*$-algebra}, for some Hopf \mbox{$*$-algebra} $\U$, one may consider
spectral triples with ``symmetries'', describing the analogue of homogeneous spin structures.
A unital spectral triple $(\A,\HH,D,\gamma)$ is called $\U$-equivariant if\\ 
(i) there is a dense subspace $\mc{M}\subset\mathrm{Dom}(D)$ of $\HH$ where the representation of $\A$ can be extended to a representation of $\A\rtimes\U$, \\
(ii) both $D$ and $\gamma$ commute
with $\U$ on $\mc{M}$. \\
In case there is a real structure $J$, one further asks that
$J|_{\mc{M}}$ is the antiunitary part of a (possibly unbounded) 
antilinear operator $\widetilde{J}:\mc{M}\to\mc{M}$ such that
\begin{equation}\label{inp-ant}
\widetilde{J}\,x=S(x)^*\widetilde{J} \;,\qquad\forall\;x\in\U\;.
\end{equation}
In other words, the antilinear involutive automorphism
$x \mapsto S(x)^*$ of the Hopf \mbox{$*$-algebra}~$\U$ is implemented by the 
operator $\widetilde{J}$. This resonate with a known feature of quantum-group duality 
in the $C^*$-algebra setting of \cite{MNW03}, where, in that setting, it is discusses the relation of the Tomita operator with the antipode $S$ and the $^*$ structure of a quantum group.

As already mentioned, spectral triples for $\CP^n_q$ were constructed
in \cite{DL09a}. They  
are typical of the noncommutative case, as they have no $q\to 1$ analogue.
Although they are not equivariant, they are ``regular'' in the sense of \cite{CM95}.

On the other hand, even regular spectral triples on \mbox{$q$-spaces} usually don't give very interesting local index formul{\ae};
tipically the unique term surviving in Connes-Moscovici local cocycle \cite{CM95} is the non-local one (cf.~\cite{DDLW07,DDL08}).
On $\CP^1_q$ a more complicated local index formula is given in \cite{NT05},
and is obtained using the \emph{non-regular} and \emph{equivariant} spectral triple of \cite{DS03}.
The geometrical nature of the latter is explained in \cite{SW04} where it is also implicitly suggested how to generalize the construction to $\CP^n_q$, by using the action of the Hopf algebra $\Uq{su}{n+1}$,
in particular the action of quasi-primitive elements, which are external derivations on $\CP^n_q$. 
This idea was used in \cite{Kra04} to
construct --- on any quantum irreducible flag manifold, including then $\CP^n_q$
--- a Dirac operator $D$ that realizes by commutators 
the unique covariant (irreducible, finite-dimensional)
first order \mbox{$*$-calculus} of \cite{HK04} for $\CP^n_q$. In particular, the exterior derivative 
is given by $\delta(a)=\sqrt{-1}\, [D,a]$, for $a\in\CP^n_q$ (the coefficient $\sqrt{-1}$
is inserted to get a real derivation). It is not clear whether this leads to
a spectral triples or not, as the compact resolvent condition is yet unproven.

Equivariant spectral triples on $\CP_q^n$ are constructed in \cite{DD09}, in
complete analogy with the $q=1$ case, by using the fact that complex projective
spaces are K{\"a}hler manifolds: in particular they admit a homogeneous (for the action of $SU(n+1)$) K{\"a}hler
metric, the \emph{Fubini-Study metric}.
The result is a family of (equivariant, even) spectral triples $(\OO(\CP^n_q),\HH_N,D_N,\gamma_N)$
labelled by $N\in\Z$ (Although we use the same symbol, the Hilbert
spaces here are not the Hilbert spaces $\HH_k$ of \S\ref{sec:3}.).  The space $\HH_0$ are the noncommutative analogue of
$(0,1)$-forms (in fact, they give a finite-dimensional covariant
differential calculus on $\CP^n_q$) with $D_0$ the analogue of the Dolbeault-Dirac
operator; $\HH_N$ is the tensor product of $\HH_0$ with `sections of
line bundles' with monopole charge $N$ over $\CP^n_q$ with $D_N$ 
the twisting of $D_0$ by the Grassmannian connection of the line bundle.
If $n$ is odd, for $N=\frac{1}{2}(n+1)$ one has 
a \emph{real} spectral triple whose Dirac operator is a deformation of
the Dirac operator of the Fubini-Study metric, in parallel with $\CP^n$ being a spin manifold when $n$ is odd.

The spectrum of $D_N$ is computed by relating its square $D_N^2$ to the Casimir of
$\Uq{su}{n+1}$. One finds that for $q<1$ the
eigenvalues of $D_N$ grow exponentially, hence the spectral triple is of metric dimension $0^+$, or better 
$0^+$-summable. For $q=1$ one finds (as expected) the spectrum given in \cite{DHMOC08}.

\section{The projective line  $\CP^1_q$  as a noncommutative manifold}\label{plncm}

Recall the $\OO(\CP^1_q)$-bimodules $\Gamma_N$ in \eqref{eq:GammaN}.
As proven in \cite{DS03}, modulo unitary equivalences
there is a unique real equivariant even spectral triple
for $\CP^1_q$ on the Hilbert space completion of $\Gamma_1\oplus\Gamma_{-1}$.
This spectral triple has metric dimension $0$ (there is
no real equivariant spectral triple on $\Gamma_1\oplus\Gamma_{-1}$ with summability
different from $0^+$). Its geometrical nature --- that the Dirac operator
is coming from the right action of $\Uq{su}{2}$ ---,
is explained in \cite{SW04}. 

Twisting the Dirac operator on the tensor product
of $\Gamma_1\oplus\Gamma_{-1}$ with a line bundle \cite{Si08},
leads to spectral triples which are in general not real \cite[Sect.~2]{DD09}.
In \cite{DDLW07} we constructed spectral triples of any summability
with a real structure $J$ satisfying a weaker version of the reality
and first order condition in \eqref{eq:real}.

In the next section we shall describe a new family of equivariant real
spectral triples for $\CP^1_q$. They generalize the ones of \cite{DS03} and 
are all inequivalent to each other (in particular, not equivalent to the one of \cite{DS03}).

\subsection{ A family of equivariant real spectral triples for $\CP^1_q$}\label{sec:new}
With the $\OO(\CP^1_q)$-bimodules $\Gamma_N$ in \eqref{eq:GammaN}, for $n\in\frac{1}{2}\Z$, let
$W_n$ be the Hilbert space completion of $\Gamma_{-2n}$ with respect to the inner product coming 
from the Haar state of $SU_q(2)$ as in \eqref{eq:inform}.
For a fixed $j\in\N+\frac{1}{2}$, we call $\HH_j$ the space of vectors
$\vec{a}=(a_{-j},a_{-j+1},...,a_j)^t$
with components $a_n\in W_n$, for $n=-j,-j+1,\ldots,j$. 

The
representation of $\OO(\CP^1_q)$ is the obvious left module structure
of $\HH_j$. The Dirac operator
$D_j$, the grading $\gamma_j$ and the real structure $J_j$ are given by
\begin{align*}
D_j\vec{a} &=(\mL_Ea_{-j+1},\, \mL_Fa_{-j},\mL_Ea_{-j+3},\, \mL_Fa_{-j+2},\ldots,
\, \mL_Ea_j, \, \mL_Fa_{j-1})^t \;,\\
\gamma_j\vec{a} &=(-a_{-j},\, a_{-j+1},\, -a_{-j+2},\, a_{-j+3},\ldots,\, -a_{j-1},\, a_j)^t \;,\\
J_j\vec{a} &=K\az (q^{-j}a_j^*,\, -q^{-j+1}a_{j-1}^*, 
\ldots,\, q^{j-1}a_{-j+1}^*,\, -q^ja_{-j}^*)^t \;.
\end{align*}
Note that $\gamma_j|_{W_n}$ is $1$ if $j+n$ is odd and
is $-1$ if $j+n$ is even.
\begin{prop}\label{jest}
The datum $(\OO(\CP^1_q),\HH_j,D_j,\gamma_j,J_j)$ is a real
even $\Uq{su}{2}$-equivariant spectral triple, with KO-dimension $2$ and
metric dimension $0^+$.
\end{prop}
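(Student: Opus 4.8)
The plan is to verify the four groups of conditions in turn: (a) boundedness of the representation and of commutators $[D_j,a]$; (b) compact resolvent of $D_j$; (c) the real-structure axioms \eqref{eq:yyy} with the correct signs for KO-dimension $2$; (d) the first-order and commutant conditions \eqref{eq:real}; together with the equivariance clauses (i)--(ii) and \eqref{inp-ant}. The underlying principle is that everything reduces, via the decomposition \eqref{deco} of each $\Gamma_N$ into irreducibles $V_n$, to representation theory of $\Uq{su}{2}$, exactly as in \cite{DS03}; the new point is only that we stack $2j+1$ copies of the basic module rather than two.

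First I would set up the equivariant structure. Each $W_n$ is, by \eqref{deco}, a unitary $\Uq{su}{2}$-module for the commuting left action $\mL$, so $\HH_j=\bigoplus_{n=-j}^j W_n$ carries the diagonal $\mL$-action; since the $\OO(\CP^1_q)$-action is the `$\az$'-action which commutes with $\mL$, the dense subspace $\mc{M}$ of finite vectors (finite sums of $V_n$'s) is an $\OO(\CP^1_q)\rtimes\Uq{su}{2}$-module, giving (i). The operators $\mL_E$, $\mL_F$ shift between consecutive $W_n$'s inside the fixed $\HH_j$ (note $a_n\in W_n=\Gamma_{-2n}$ and $\mL_E$ raises the $\mL_K$-eigenvalue, landing in $W_{n-1}$ or $W_{n+1}$ appropriately), so $D_j$ is built from the $\mL$-action of elements of $\Uq{su}{2}$ and hence commutes with the `$\az$'-action on $\mc{M}$; the same is obvious for the diagonal $\gamma_j$. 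This gives (ii). Self-adjointness of $D_j$ on the natural domain follows because $\mL_E^*=\mL_F$ for the Haar inner product (the $\mL$-action is unitary, as recalled before \eqref{eq:Esigma}), so $D_j$ is a symmetric operator that is block-diagonal with finite blocks on the isotypical pieces, hence essentially self-adjoint.

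Next, the spectrum: on $\mc{M}$, $D_j^2$ is diagonal and, block by block, computing $\mL_F\mL_E$ and $\mL_E\mL_F$ in terms of the Casimir \eqref{Cas} shows that on the $V_\ell$-isotypical component the eigenvalue of $D_j^2$ is (up to the shift in \eqref{Cas}) of the form $[\tfrac{\ell+1}{2}]^2-[\text{const}]^2$, which for $q<1$ grows like $q^{-\ell}$ and for $q=1$ grows linearly; in all cases the eigenvalues tend to $\infty$ with finite multiplicities, so $D_j$ has compact resolvent, and the growth rate shows the metric dimension is $0^+$ when $q<1$ (this is the same phenomenon as in \cite{DS03,DD09}). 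For boundedness of $[D_j,a]$ one uses that $a\in\OO(\CP^1_q)$ acts by a polynomial in the $p_{ij}$, which act boundedly and map $\mc{M}$ to $\mc{M}$; the commutator $[D_j,a]$ is then expressed, via the module-algebra relation $\mL_x a=(\mL_{x_{(1)}}\az a)\,\mL_{x_{(2)}}$ on $\mc{M}$ — more precisely using that $E,F$ act as twisted derivations — as left multiplication by the finitely many elements $E\az p_{ij}$, $F\az p_{ij}\in\OO(\CP^1_q)$ dressed with bounded $K$-factors, hence bounded. The real structure: $J_j$ is antilinear, and $J_j^2=\epsilon\cdot 1$ with $\epsilon=-1$ is a short check from the alternating signs $q^{\pm n}$ together with $K\az$ acting on the $*$-ed components (using $(a^*)^*=a$ and that $\mL_K$ becomes $\mL_{K^{-1}}$ under $*$); $J_j D_j=\epsilon' D_j J_j$ with $\epsilon'=+1$ follows because conjugating $\mL_E$ by the antilinear $a\mapsto a^*$ combined with $K\az$ produces $\mL_F$ with a sign that the interleaving of $\mL_E$/$\mL_F$ blocks in $D_j$ exactly absorbs; and $J_j\gamma_j=\epsilon''\gamma_j J_j$ with $\epsilon''=-1$ since $J_j$ reverses the order of the components and $\gamma_j$ alternates in sign. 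The triple $(\epsilon,\epsilon',\epsilon'')=(-1,+1,-1)$ is precisely KO-dimension $2$.

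Finally, conditions \eqref{eq:real}: the commutant condition $[a,J_jbJ_j^{-1}]=0$ holds because $J_j b J_j^{-1}$ acts by a \emph{right} multiplication operator (conjugating the left `$\az$'-action by $*$ turns $\OO(\CP^1_q)$ into its opposite acting on the right on $\Gamma_N\subset\OO(SU_q(2))$), and left and right multiplications on $\OO(SU_q(2))$ commute; the first-order condition $[[D_j,a],J_jbJ_j^{-1}]=0$ holds for the same reason, since $[D_j,a]$ is still a left-multiplication operator by the computation above, hence commutes with the right multiplications implementing $J_j b J_j^{-1}$. I expect the main obstacle to be bookkeeping rather than conceptual: getting the signs $\epsilon,\epsilon',\epsilon''$ right requires carefully tracking how the antilinear $J_j$ interacts with the $*$-structure on each $W_n$, with the $K\az$ prefactor, and with the interleaved block pattern of $D_j$ — a single misplaced sign changes the claimed KO-dimension. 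Verifying that $J_j$ is a genuine \emph{isometry} for the Haar inner product (so that $J_j$ really extends to $\HH_j$) also needs the unitarity of $\mL_K$ and the compatibility of the Haar state with $*$, and this is where one must be most careful that the $q$-powers $q^{\pm n}$ are the correct normalization.
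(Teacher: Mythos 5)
Your overall strategy is the paper's own: $D_j$ is built from the $\mL$-action and therefore commutes with the `$\az$'-action (equivariance), the twisted Leibniz rule turns $[D_j,a]$ into a multiplication operator (boundedness), $D_j^2$ is essentially the $\mL$-action of the Casimir \eqref{Cas} whose eigenvalues grow exponentially along the decomposition \eqref{deco} (compact resolvent and $0^+$-summability), $J_jbJ_j^{-1}$ is right multiplication by $b^*$ (both conditions \eqref{eq:real}), and the signs give KO-dimension $2$. But two steps need repair. First, you systematically conflate the two commuting left actions, and in the commutator step this produces a false claim: the element appearing in $[D_j,a]$ on $W_n$ is $q^{n}\,\mL_E(a)$ (and the analogous expression with $\mL_F$), which lies in $\Gamma_{\pm 2}\subset\OO(SU_q(2))$ and \emph{not} in $\OO(\CP^1_q)$ --- it cannot be $E\az p_{ij}\in\OO(\CP^1_q)$ as you write, since $[D_j,a]$ must shift $W_n\to W_{n\mp1}$ whereas multiplication by an element of $\OO(\CP^1_q)$ preserves each $W_n$. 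The conclusion survives (any element of $\OO(SU_q(2))$ acts boundedly in the GNS representation of the Haar state), but the step as written is wrong and should be restated with the $\mL$-derivatives.

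Second, the isometry of $J_j$, which you rightly single out as the delicate point, cannot be obtained from ``unitarity of $\mL_K$'': the operator entering $J_j$ is $K\az$, which is positive and unbounded (and $\mL_K$ acts on $W_n$ as the non-unitary scalar $q^{-n}$). The ingredient you are missing is the modular property of the Haar state, $h(ab)=h\big(b\,(K^2\az a\za K^2)\big)$, used together with bi-invariance; it is precisely this failure of traciality that the prefactors $q^{\pm n}$ in $J_j$ are designed to compensate, and without it the computation does not close --- this is a genuine gap in the proposal as it stands. Relatedly, you announce but never verify the equivariance condition \eqref{inp-ant}: one must introduce the antilinear operator $\widetilde J_j$ obtained from $J_j$ by dropping the factor $K\az$, observe that $J_j=K\az\,\widetilde J_j$ with $K\az$ positive (so $J_j$ is the antiunitary part of $\widetilde J_j$), and check $S(x^*)\az\widetilde J_j(\vec a)=\widetilde J_j(x\az\vec a)$ from $x\az a^*=\left(S(x)^*\az a\right)^*$; it is short, but it is part of what the proposition asserts.
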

\begin{proof}
The proof is analogous to the one in \cite{SW04} 
(and generalizations in \cite{DDL08b,DD09}).
By definition $a_n\in \Gamma_{-2n}$ satisfies
$\mL_K(a_n)=q^{-n}a_n$, and 
$\mL_K\mL_E= q \mL_E\mL_K$
proves that $\mL_E$ is a densely defined
operator $W_n\to W_{n-1}$. Similarly, $\mL_F$
is a densely defined operator $W_n\to W_{n+1}$.
Hence $D_j$ is a well defined symmetric operator
on $\mc{M}:=\bigoplus_n\Gamma_{2n}$. It can be closed
to a self-adjoint operator on $\HH_j$ (in fact, one can
diagonalize it and give its domain of self-adjointness
explicitly).

The representation of $\OO(\CP^1_q)$ is clearly bounded.
From the coproduct formula of $E$, and the defining property
$\mL_K(a)=a$ of $a\in\OO(\CP^1_q)$, one gets
$$
\mL_E(a \eta)=
(\mL_Ea)(\mL_{K^{-1}}\eta)+
(\mL_Ka)(\mL_E \eta)=q^n(\mL_Ea)\eta+a(\mL_E \eta) \;,
$$
for all $a\in\OO(\CP^1_q)$ and $\eta\in\Gamma_{-2n}$. Thus the commutator 
$[\mL_E,a]=q^n\mL_E(a)$, is the multiplication
operator for an element $\mL_E(a)\in\OO(SU_q(2))$, and
hence a bounded operator $W_n\to W_{n-1}$. A similar formula
holds for $[\mL_F,a]$, proving that $[D_j,a]$ is a
bounded operator, for all $a\in\OO(\CP^1_q)$.

The grading commutes with any $a\in\OO(\CP^1_q)$ and anticommutes
with $D_j$. The square of the Dirac operator is
$$
D_j^2\vec{a}=
\big(\,\mL_E \mL_F a_{-j}\,,\,\mL_F \mL_E a_{-j+1}\,,\,\ldots, \, \mL_E\mL_F a_j, \, \mL_F\mL_E a_{j-1} \,\big)^t \;.
$$
Since $\mL_K$ is proportional to the identity on each $\Gamma_{-2n}$,
modulo a constant matrix, $D_j^2$ is given by the $\mL$ action
of the central element $\mc{C}_q$ in \eqref{Cas}.
For a central element left and right canonical actions coincide,
and with respect to the left action of $\Uq{su}{2}$ we have the
decomposition into irreducible representations
$\Gamma_{-2n}\simeq\bigoplus_{\ell-|n|\in\N}V_{2\ell}$,
as given in \eqref{deco}. Since the eigenvalues
of $\mc{C}_q$ grow exponentially with $\ell$, the operator $\mc{C}_q$
has compact resolvent on each $W_n$;
the operator $D_j$ has compact resolvent too, given that there
are only finitely many $W_n$ in $\HH_j$. 
This proves that $(\OO(\CP^1_q),\HH_j,D_j,\gamma_j)$ is a spectral
triple. In fact, the eigenvalues of $D_j$ growing exponentially as well, the operator 
$(1+D_j^2)^{-\epsilon}$ is of trace class for any $\epsilon>0$. Hence
the metric dimension of the spectral triple is $0^+$ (the spectral triple is $0^+$-summable).

Next the real structure. The operator $J_j$ is an isometry:
\begin{align*}
\inner{J_j\vec{a},J_j\vec{b}} &=
\sum\nolimits_nq^{-2n}h\big((K^{-1}\az a_n)(K\az b_n^*)\big) \\
&=\sum\nolimits_n h\big((K^{-1}\az a_n\za K^{-1})(K\az b_n^*\za K)\big) \\
&=\sum\nolimits_n h\big((K\az b_n^*\za K)(K\az a_n\za K)\big) \\
&=\sum\nolimits_n h\big(K\az (b_n^*a_n)\za K\big) \\
&=\sum\nolimits_n h(b_n^*a_n)=\inner{\vec{b},\vec{a}} \;,
\end{align*}
where we used the bi-invariance and the modular property
of the Haar state $h$, i.e.~$h(ab)=h\left( b\, (K^2\az a\za K^2) \right)$
(cf.~eq.~(3.4) in \cite{DD09}). Clearly $J_j\gamma_j=-\gamma_jJ_j$. And,
since $x\az a^*= \left(S(x)^*\az a \right)^*$ for all $x\in\Uq{su}{2}$ and any $a\in\OO(SU_q(2))$, 
one also easily checks that $J_j^2=-1$. As for the antilinear operator $\widetilde{J}$ as in \eqref{inp-ant},
let $\widetilde{J}_j:\mc{M}\to\mc{M}$ be
the (unbounded) operator
$$
\widetilde{J}_j\vec{a}=(q^{-j}a_j^*,-q^{-j+1}a_{j-1}^*,\ldots,q^{j-1}a_{-j+1}^*,-q^ja_{-j}^*)^t \;.
$$
Note that $J_j\vec{a}=K\az(\widetilde{J}_j\vec{a})$. Since $K\az $ is a positive operator,
$J_j$ is the antiunitary part of $\widetilde{J}_j$. 
Furthermore, from $x\az a^*= \left( S(x)^*\az a \right)^* $ and $S(S(x^*)^*)=x$
it follows $S(x^*)\az\widetilde{J}_j(\vec{a})=\widetilde{J}_j(x\az\vec{a})$
for all $x\in\Uq{su}{2}$, i.e.~the relation \eqref{inp-ant}.

Since 
$\mL_E(a^*)=-q^{-1}(\mL_Fa)^*$ and $\mL_F(a^*)=-q(\mL_Ea)^*$, we have:
\begin{align*}
& D_j\widetilde{J}_j\vec{a} =\big(-q^{-j+1}\mL_E(a_{j-1}^*),q^{-j}\mL_F(a_j^*),\ldots,-q^j\mL_E(a_{-j}^*),q^{j-1}\mL_F(a_{-j+1}^*)\big)^t \\
& \quad =\big(q^{-j}(\mL_Fa_{j-1})^*,-q^{-j+1}(\mL_Ea_j)^*,\ldots,q^{j-1}(\mL_Fa_{-j})^*,-q^j(\mL_Ea_{-j+1})^*\big)^t \\
& \quad =\widetilde{J}_j(\mL_Ea_{-j+1},\mL_Fa_{-j},\mL_Ea_{-j+3},\mL_Fa_{-j+2},\ldots,\mL_Ea_j,\mL_Fa_{j-1})^t \\
& \quad =\widetilde{J}_jD_j\vec{a} \;.
\end{align*}
As left and right actions of $\Uq{su}{2}$ commute,
it follows that $[D_j, J_j]=0$.

The signs in \eqref{eq:yyy} are then $\epsilon=-1$, $\epsilon'=1$ and $\epsilon''=-1$ and correspond to KO-dimension $2$. 
One easily checks that $J_jaJ_j^{-1}$ is the operator of right multiplication by $a^*$, for all $a\in\OO(\CP^1_q)$,
and hence it commutes with $b$ and $[D_j,b]$ for $b\in\OO(\CP^1_q)$. This proves both conditions \eqref{eq:real}.

The left action of $\Uq{su}{2}$ on $\mc{M}$ commutes with $D_j$ (it commutes
with the right action), it clearly commute with the grading, since each
$\Gamma_{-2n}$ is a left $\Uq{su}{2}$-module, and in fact the representation
of $\OO(\CP^1_q)$ extends to a representation of $\OO(\CP^1_q)\rtimes\Uq{su}{2}$.
Thus we have a real equivariant spectral triple, as claimed.\cvd
\end{proof}

Spectral triples of Prop.~\ref{jest} corresponding to
different values of $j$ are `topologically' inequivalent since, as we shall see in next section, they give different
values when paired with the generator $p=P_1$ of $K_0(\OO(\CP^1_q))$,
\begin{equation}\label{eq:P1}
p=\ma{\alpha^*\alpha\; & \alpha^*\beta \\ \beta^*\alpha\; & \beta^*\beta} =\ma{1-q^2A & \;B^* \\ \;B & A} \;,
\end{equation}
having used generators $A=\beta^*\beta$ and $B=\beta^*\alpha$ for the algebra $\OO(\CP^1_q)$. 

In the next section this result will be also used to establish rational Poincar{\'e} duality 
for the spectral triples, thus generalizing the analogous result proven in  \cite{Wag09}  
for the spectral triple of \cite{DS03}.

\subsection{Index computations and rational Poincar\'e duality}\label{sec:7.2}
We consider here the spectral triple $(\OO(\CP^1_q),\HH_j,D_j,\gamma_j,J_j)$ of Prop.~\ref{jest}, 
being $j\in\N+\frac{1}{2}$ a fixed number. With $n\in\Z+\frac{1}{2}$ and condition $|n|\leq j$, let
$$
\HH_j^+:=(1+\gamma_j)\HH_j=\!\bigoplus_{j+n\;\mathrm{odd}}\!W_n \,,\qquad
\HH_j^-:=(1-\gamma_j)\HH_j=\!\bigoplus_{j+n\;\mathrm{even}}\!W_n \,,
$$
and let $D_j^+:=\smash[b]{D_j|_{\HH_j^+}\otimes \id_{\C^2}}$.
Let $p$ be  the `defining' projection in \eqref{eq:P1}.
We aim at computing the index of the (unbounded) operator 
$$
pD^+_jp \,:\,p(\HH_j^+\otimes\C^2)\to p(\HH_j^-\otimes\C^2) \;,
$$
yielding the
pairing of the K-homology class of the spectral triple with
the non-trivial generator of $K_0(\OO(\CP^1_q))$.

\begin{prop}\label{prop:indexj}
It holds that
$$
\mathrm{Index}(pD^+_jp)=
\begin{cases}
\tfrac{1}{2}(j^2-\tfrac{9}{4}) & \mathrm{if}\;j\in 2\N+\frac{1}{2} \;,\\[2pt]
\tfrac{1}{2}(j^2-\tfrac{1}{4}) & \mathrm{if}\;j\in 2\N+\frac{3}{2} \;.
\end{cases}
$$
The index being never zero, these spectral triples are ``topologically" non-trivial.
\end{prop}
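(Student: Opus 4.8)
The plan is to compute the index by diagonalizing the Dirac operator $D_j$ using the representation-theoretic decomposition already established, and then tracking how the twisting by $p=P_1$ shifts the multiplicities. First I would recall from \eqref{deco} that each $W_n$ decomposes as a left $\Uq{su}{2}$-module as $\bigoplus_{\ell \geq |n|,\, \ell-|n|\in\Z}V_{2\ell}$, where I index so that the spin-$\ell$ representation $V_{2\ell}$ has dimension $2\ell+1$; in each such block $D_j^2$ acts (up to the constant matrix coming from $\mL_K$) as the Casimir $\mc{C}_q$, hence has eigenvalue $[\ell+\tfrac12]^2$ by \eqref{Cas}. The operators $\mL_E:W_n\to W_{n-1}$ and $\mL_F:W_n\to W_{n+1}$ intertwine the left action, so they map the spin-$\ell$ component isomorphically onto the spin-$\ell$ component of the neighbouring $W_{n\mp1}$ whenever $\ell\geq\max(|n|,|n-1|)$, i.e.\ on the ``generic'' part; the kernel of $D_j$ therefore comes precisely from the finitely many ``edge'' spin-$\ell$ components that have no partner, i.e.\ those with $\ell=|n|$ sitting at the top index $n=j$ or bottom index $n=-j$. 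Pairing $\gamma_j$ with these edge contributions gives $\mathrm{Index}(D_j^+)$ before twisting, which one checks is $0$ (consistent with $\CP^1_q$ being its own ``classical point'' contributing trivially at $N=0$).

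Next I would bring in the twist by $p$. Since $p=P_1$ is the defining projection, $\OO(\CP^1_q)^2 p \simeq \Gamma_1$ as a left module, and tensoring $\HH_j$ with this module over $\OO(\CP^1_q)$ has the effect, on each $W_n=\overline{\Gamma_{-2n}}$, of replacing it by $\overline{\Gamma_{-2n}\otimes_{\OO(\CP^1_q)}\Gamma_1}$. Using the standard fusion $\Gamma_{-2n}\otimes_{\OO(\CP^1_q)}\Gamma_1$ (which at the level of left $\Uq{su}{2}$-modules reproduces, via the decompositions in \eqref{deco}, a Clebsch--Gordan-type shift), the net effect is to move the ``edge'' of each $W_n$ by a half-integer, so the uncancelled spin components that form $\ker(pD_j^+p)$ and $\mathrm{coker}(pD_j^+p)$ are now a slightly different finite set of representations than in the untwisted case. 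The index is then $\sum (\pm)(\dim\text{ of the surviving blocks})$, the sign being the value of $\gamma_j$ on that block, namely $+1$ if $j+n$ is odd and $-1$ if $j+n$ is even as noted before Prop.~\ref{jest}.

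Concretely I would list, for each $n$ with $|n|\leq j$ and $n\in\Z+\tfrac12$, the finitely many spin-$\ell$ components of $\overline{\Gamma_{-2n}\otimes\Gamma_1}$ with small $\ell$ that fail to be matched by $\mL_E$ or $\mL_F$ to a neighbouring $W$; each contributes $\pm(2\ell+1)$ to the index. Summing this finite alternating series over $n=-j+\tfrac12,\dots,j-\tfrac12$ (the range being constrained by $|n|\leq j$ and the off-diagonal structure of $D_j$), collecting the arithmetic progressions that appear, and separating the two residue classes of $j$ modulo $2$ (which enter because the sign $(-1)^{j+n}$ and the precise edge index depend on the parity of $j\mp\tfrac12$), should yield the two quadratic expressions $\tfrac12(j^2-\tfrac94)$ and $\tfrac12(j^2-\tfrac14)$. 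Finally, since for $j\in\N+\tfrac12$ both quantities $j^2-\tfrac94$ and $j^2-\tfrac14$ are nonzero (the only potential zero, $j=\tfrac32$, falls in the first case where the value is $0$ — so one must double-check: at $j=\tfrac32$ one gets $\tfrac12(\tfrac94-\tfrac94)=0$; thus the ``never zero'' claim needs $j\geq\tfrac52$ in that branch, or the edge-counting at $j=\tfrac32$ must be redone more carefully, and I expect the correct bookkeeping to remove this apparent zero), the index is nonzero, so the pairing with $[p]$ is nontrivial and the spectral triples are topologically nontrivial and pairwise inequivalent.

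The main obstacle I anticipate is the precise edge-counting: correctly identifying, after the twist by $p$, exactly which low-spin components survive in kernel versus cokernel for each $n$, getting the boundary terms at $n=\pm(j-\tfrac12)$ right, and keeping the signs $(-1)^{j+n}$ consistent — it is easy to be off by one representation, which would change the constant in the quadratic. The representation theory (\eqref{deco}, \eqref{Cas}) and the module identifications ($\Gamma_1\simeq\OO(\CP^1_q)^2 P_1$, fusion of the $\Gamma_N$) are all available from the excerpt; the real work is the careful combinatorial sum and the parity split.
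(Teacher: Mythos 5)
Your strategy (reduce the index to finitely many unmatched low-spin components and sum their dimensions with signs) is in the same spirit as the paper's proof, but as written it has a genuine gap at exactly the point where the work lies: you never establish how the \emph{compressed} operator $pD_j^+p$ acts. Two concrete problems. First, the identification of the range of $p$: since $p$ acts on column vectors in $W_n\otimes\C^2$ by left multiplication, the relevant projective-module identification is the \emph{right}-module one, $p\,\OO(\CP^1_q)^2\simeq\Gamma_{-1}$, so that $p(W_n\otimes\C^2)$ is a completion of $\Gamma_{-1}\otimes_{\OO(\CP^1_q)}\Gamma_{-2n}\simeq\Gamma_{-2n-1}$, not of $\Gamma_{-2n}\otimes_{\OO(\CP^1_q)}\Gamma_{1}$ as you write; the left-module isomorphism $\OO(\CP^1_q)^2p\simeq\Gamma_1$ is not the one that describes $p(\HH_j\otimes\C^2)$, and the direction of the half-integer shift changes which edge components survive and the sign of the answer. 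Second, and more seriously, even with the correct identification the operator in question is the compression $p\,\mL_{E/F}\,p$, which is \emph{not} simply the shifted Dirac operator on the fused modules; one must either prove directly that the generic components cancel between kernel and cokernel of the compression, or argue by a homotopy/compact-perturbation argument that the compression has the same index as a twisted Dirac operator and then compute the latter. The paper does the former explicitly: it diagonalizes $p$ on each $W_n\otimes\C^2$ via the rotated basis $\{w^{n,||}_{l,m},w^{n,\perp}_{l,m}\}$, proves the nontrivial identity $B_{l,m,n}=A_{l,m,n+1}\neq 0$ for the matrix elements of $\mL_E,\mL_F$ in that basis (this is what forces the pairwise cancellation of all generic vectors), and then checks by hand that the only contributions come from the vectors $v^{n,\downarrow}_{|n|-\frac12,m}$ with $n<0$ (all in the kernel) plus, for one parity of $j$, the single cokernel vector $v^{-1/2,\downarrow}_{0,0}$. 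Nothing in your sketch substitutes for this step; it is precisely the ``careful combinatorial sum'' you defer.

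A further sign that the bookkeeping has not been carried out is your discussion of $j=\tfrac32$: you place it in the branch $\tfrac12(j^2-\tfrac94)$ and worry the index vanishes there, but $\tfrac32\in2\N+\tfrac32$, so the proposition assigns it the value $\tfrac12(j^2-\tfrac14)=1$; the stated formula is never zero on its actual domain of validity (the paper even checks $j=\tfrac12$ separately, getting $-1$, and notes the second-branch formula happens to cover it). So the ``never zero'' claim needs no repair --- but your uncertainty about it traces back to the unfinished edge-counting, which is the heart of the proof.
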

\begin{proof}
To compute the index, we look for a ``nice'' basis of $\HH_j^\pm\otimes\C^2$.
We begin by recalling the left regular representation of $\OO(SU_q(2))$,  
found for instance in \cite{DLS05}. An orthonormal basis of $\OO(SU_q(2))$ is given by
$$
\ket{l,m,n}=q^n[2l+1]^{\frac{1}{2}}t^l_{nm}\;,\qquad
l\in\tfrac{1}{2}\N,\;l-|m|\in\N,\;l-|n|\in\N.
$$
with $t^l_{nm}$ the matrix elements of irreducible corepresentations
\cite[Sect.~4.2.4]{KS97} (with respect to the
notations of \cite{DLS05} we exchanged the labels $m$ and $n$).
The left regular representation is given on generators by \cite[Prop.~3.3]{DLS05}
\begin{align*}
\alpha\ket{l,m,n} =      q^{-l+\frac{1}{2}(m+n-1)} \left(\frac{[l+m+1][l+n+1]}{[2l+1][2l+2]}\right)^{\frac{1}{2}} & \ket{l^+,m^+,n^+} \\     +q^{l+\frac{1}{2}(m+n+1)} \left(\frac{[l-m][l-n]}{[2l][2l+1]}\right)^{\frac{1}{2}} & \ket{l^-,m^+,n^+} \;, \\
\beta\ket{l,m,n} =      q^{\frac{1}{2}(m+n-1)} \left(\frac{[l-m+1][l+n+1]}{[2l+1][2l+2]}\right)^{\frac{1}{2}} & \ket{l^+,m^-,n^+} \\
  -q^{\frac{1}{2}(m+n-1)} \left(\frac{[l+m][l-n]}{[2l][2l+1]}\right)^{\frac{1}{2}} & \ket{l^-,m^-,n^+} \;,
\end{align*}
with the notation $k^\pm:=k\pm\tfrac{1}{2}$.
Also, from \cite[eq.~(3.1)]{DLS05} and the definition of the automorphism $\vartheta$ there
(i.e.~$K=k=\vartheta(k^{-1})$, $E=-f=\vartheta(e)$ and $F=-e=\vartheta(f)$) we deduce
\begin{align*}
\mL_K\ket{l,m,n} &=q^{-n}\ket{l,m,n} \;,\\
\mL_F\ket{l,m,n} &=\sqrt{[l-n][l+n+1]}\ket{l,m,n+1} \;,\\
\mL_E\ket{l,m,n} &=\sqrt{[l-n+1][l+n]}\ket{l,m,n-1} \;.
\end{align*}
The Hilbert space $W_n$ has basis $\ket{l,m,n}$, with
$n\in\Z+\frac{1}{2}$ fixed, $l=|n|,|n|+1,\ldots$ and $m=-l,-l+1,\ldots,l$.
Using this, a basis of $W_n\otimes\C^2$ (already employed in \cite[Sect.~3.8]{Dan07}),  
is given by:
\begin{align*}
v^{n,\uparrow}_{l,m} &:=
\frac{1}{\sqrt{[2l]}} \left(\begin{array}{r}
\sqrt{q^{-l+m}[l+m]} \ket{l^-,m^-,n} \\[2pt]
\sqrt{q^{l+m}[l-m]} \ket{l^-,m^+,n} 
\end{array}\right)
 \;, \qquad l=|n|+\tfrac{1}{2},|n|+\tfrac{3}{2},\ldots,
\\[5pt]
v^{n,\downarrow}_{l,m} &:=
\frac{1}{\sqrt{[2l+2]}} \left(\begin{array}{r}
\sqrt{q^{l+m+1}[l-m+1]} \ket{l^+,m^-,n} \\[2pt]
-\sqrt{q^{-l+m-1}[l+m+1]} \ket{l^+,m^+,n}
\end{array}\right) \;,
 \\
& \hspace{7.3cm} l=|n|-\tfrac{1}{2},|n|+\tfrac{1}{2},\ldots,
\end{align*}
where $m=-l,-l+1,\ldots,l$. Notice that in previous equation $l$ and $m$
are integers (while $n$ is not). 
For notational convenience, we set
$v^{n,\uparrow}_{|n|-\frac{1}{2},m}:=0$ and start
counting from $l=|n|-\frac{1}{2}$ for both $v^\uparrow$
and $v^\downarrow$.
An easy exercise checks that  passing from
the vectors $\{\ket{l,m,n}\otimes\binom{1}{0},\ket{l,m,n}\otimes\binom{0}{1}\}$
to the vectors $\{v^{n,\uparrow}_{l,m},v^{n,\downarrow}_{l,m}\}$
is an isometry, and thus we got an orthonormal basis
of $W_n\otimes\C^2$. 

The restriction of the left regular representation of $\OO(SU_q(2))$
to $\OO(\CP^1_q)$ is given on generators $A$ and $B$ by:
\begin{align*}
 A\ket{l,m,n} & = 
-q^{m+n-1}\tfrac{1}{[2l+2]}\sqrt{\tfrac{[l+m+1][l-m+1][l+n+1][l-n+1]}{[2l+1][2l+3]}}  \ket{l+1,m,n} \\
&\phantom{=} +q^{m+n-1}\left(\tfrac{[l-m+1][l+n+1]}{[2l+1][2l+2]}+\tfrac{[l+m][l-n]}{[2l][2l+1]}\right) \ket{l,m,n} \\
&\phantom{=} -q^{m+n-1}\tfrac{1}{[2l]}\sqrt{\tfrac{[l+m][l-m][l+n][l-n]}{[2l-1][2l+1]}} \ket{l-1,m,n} \;,
 \\[10pt]
B\ket{l,m,n} &= 
-q^{-l+m+n-\frac{1}{2}}\tfrac{1}{[2l+2]}\sqrt{\tfrac{[l+m+1][l+m+2][l+n+1][l-n+1]}{[2l+1][2l+3]}}  \ket{l+1,m+1,n} \\
&\phantom{=} +q^{m+n}\tfrac{\sqrt{[l+m+1][l-m]}}{[2l+1]}\left(\tfrac{q^{-l-\frac{1}{2}}[l+n+1]}{[2l+2]} -\tfrac{q^{l+\frac{1}{2}}[l-n]}{[2l]}\right)  \ket{l,m+1,n} \\
&\phantom{=} +q^{l+m+n+\frac{1}{2}}\tfrac{1}{[2l]}\sqrt{\tfrac{[l-m][l-m-1][l+n][l-n]}{[2l-1][2l+1]}} \ket{l-1,m+1,n} \;.
\end{align*}
Using these, for the action of the projection $p$ in \eqref{eq:P1}, we get:
$$
\begin{array}{rrr}
p\,v^{n,\uparrow}_{l,m}=
& q^{-l+n-\frac{1}{2}}\frac{[l+n+\frac{1}{2}]}{[2l+1]} \;v^{n,\uparrow}_{l,m} \;\;+
& q^n\frac{\sqrt{[l+n+\frac{1}{2}][l-n+\frac{1}{2}]}}{[2l+1]} \;v^{n,\downarrow}_{l,m} \;,\\[10pt]
p\,v^{n,\downarrow}_{l,m}=
& q^n\frac{\sqrt{[l+n+\frac{1}{2}][l-n+\frac{1}{2}]}}{[2l+1]} \;v^{n,\uparrow}_{l,m} \;\;+
& q^{l+n+\frac{1}{2}}\frac{[l-n+\frac{1}{2}]}{[2l+1]} \;v^{n,\downarrow}_{l,m} \;,
\end{array}
$$
if $l>|n|-\frac{1}{2}$, while for $l=|n|-\frac{1}{2}$:
$$
p\,v^{n,\downarrow}_{|n|-\frac{1}{2},m}=
\begin{cases}
0 & \mathrm{if}\;n>0\;, \\
v^{n,\downarrow}_{|n|-\frac{1}{2},m} & \mathrm{if}\;n<0\;.
\end{cases}
$$
We rewrite previous equations in the form
\begin{align*}
\begin{pmatrix}
p\, v^{n,\uparrow}_{l,m} \\[2pt] p\, v^{n,\downarrow}_{l,m}
\end{pmatrix}
&=\frac{q^n}{[2l+1]} \times \\ & \quad \times \begin{pmatrix}
q^{-l-\frac{1}{2}} [l+n+\frac{1}{2}]  &
\sqrt{[l+n+\frac{1}{2}][l-n+\frac{1}{2}]} \; \\[8pt]
\sqrt{[l+n+\frac{1}{2}][l-n+\frac{1}{2}]}  &
q^{l+\frac{1}{2}} [l-n+\frac{1}{2}]  
\end{pmatrix}
\begin{pmatrix}
v^{n,\uparrow}_{l,m} \\[2pt] v^{n,\downarrow}_{l,m}
\end{pmatrix}
\\[5pt] &=:
\begin{pmatrix}
P^{11}_{l,m,n} & P^{12}_{l,m,n} \\[2pt] P^{12}_{l,m,n} & P^{22}_{l,m,n}
\end{pmatrix}
\begin{pmatrix}
v^{n,\uparrow}_{l,m} \\[2pt] v^{n,\downarrow}_{l,m}
\end{pmatrix}
\end{align*}
and notice that --- for any $l,m,n$ --- the $2\times 2$ matrix
$$
P_{l,m,n}
=\begin{pmatrix}
P^{11}_{l,m,n} & P^{12}_{l,m,n} \\[2pt] P^{12}_{l,m,n} & P^{22}_{l,m,n}
\end{pmatrix}
$$
is a rank $1$ projection. Thus, since
$(P^{11}_{l,m,n})^2+(P^{12}_{l,m,n})^2=P^{11}_{l,m,n}$,
the matrix
$$
R_{l,m,n}
:=\frac{1}{\sqrt{P^{11}_{l,m,n}}}\begin{pmatrix}
P^{11}_{l,m,n} & \phantom{-}P^{12}_{l,m,n} \\[2pt] P^{12}_{l,m,n} & -P^{11}_{l,m,n}
\end{pmatrix} \;, 
$$
is a rotation. It is, in fact, unipotent, i.e.~$R_{l,m,n}^2=1$.
The vectors
$$
\begin{pmatrix}
w^{n,||}_{l,m} \\[2pt] w^{n,\perp}_{l,m}
\end{pmatrix}
:=R_{l,m,n}
\begin{pmatrix}
v^{n,\uparrow}_{l,m} \\[2pt] v^{n,\downarrow}_{l,m}
\end{pmatrix}
$$
together with $v^{n,\downarrow}_{|n|-\frac{1}{2},m}$ form an
orthonormal basis of $W_n\otimes\C^2$ made of eigenvectors
of the projection $p$, i.e.
$$
p\,w^{n,||}_{l,m}=w^{n,||}_{l,m}\;,\qquad
p\,w^{n,\perp}_{l,m}=0\;.
$$
The space $p(\HH_N^+\otimes\C^2)$ is the span of the vectors:
\begin{align*}
& w^{n,||}_{l,m} &&
\forall\;\;n=-j+1,-j+3,\ldots,j,\;\;
l=|n|+\tfrac{1}{2},|n|+\tfrac{3}{2},\ldots,
\\[2pt]
& v^{n,\downarrow}_{|n|-\frac{1}{2},m} &&
\forall\;\;n=-j+1,-j+3,\ldots,j:n<0\;,
\end{align*}
and for all $m=-l,-l+1,\ldots,l$, with $l=|n|-\tfrac{1}{2}$ in the latter case.\\
Similarly $p(\HH_N^-\otimes\C^2)$  is the span of the vectors:
\begin{align*}
& w^{n,||}_{l,m} &&
\forall\;\;n=-j,-j+2,\ldots,j-1,\;\;
l=|n|+\tfrac{1}{2},|n|+\tfrac{3}{2},\ldots,
\\[2pt]
& v^{n,\downarrow}_{|n|-\frac{1}{2},m} &&
\forall\;\;n=-j,-j+2,\ldots,j-1:n<0\;,
\end{align*}
and for all $m=-l,-l+1,\ldots,l$, with $l=|n|-\tfrac{1}{2}$ in the latter case.

On the vectors $\{w^{n,||}_{l,m}, w^{n,\perp}_{l,m}\}$,  the action of $\mL_E$ and $\mL_F$ will have the form
\begin{align*}
\begin{pmatrix}
\mL_E w^{n,||}_{l,m} \\[2pt] \mL_E w^{n,\perp}_{l,m}
\end{pmatrix} =
\begin{pmatrix}
A_{l,m,n}\; & \ldots \\[2pt]
\ldots & \ldots
\end{pmatrix}
\begin{pmatrix}
w^{n-1,||}_{l,m} \\[2pt] w^{n-1,\perp}_{l,m}
\end{pmatrix} \;,
\end{align*}
and 
\begin{align*}
\begin{pmatrix}
\mL_F w^{n,||}_{l,m} \\[2pt] \mL_F w^{n,\perp}_{l,m}
\end{pmatrix}=
\begin{pmatrix}
B_{l,m,n}\; & \ldots \\[2pt]
\ldots & \ldots
\end{pmatrix}
\begin{pmatrix}
w^{n+1,||}_{l,m} \\[2pt] w^{n+1,\perp}_{l,m}
\end{pmatrix} \;.
\end{align*}
A vector $w^{n,||}_{l,m}$ is in the kernel of $pD^+_j$
if and only if $A_{l,m,n}=0$, and is in the cockernel if
and only if $B_{l,m,n}=0$.
Using the action of $\mL_E$ and $\mL_F$, found out to be given by
\begin{align*}
\mL_Ev^{n,\uparrow}_{l,m} &=\sqrt{[l-n+\tfrac{1}{2}][l+n-\tfrac{1}{2}]}\,v^{n-1,\uparrow}_{l,m} \;,\\
\mL_Ev^{n,\downarrow}_{l,m} &=\sqrt{[l-n+\tfrac{3}{2}][l+n+\tfrac{1}{2}]}\,v^{n-1,\downarrow}_{l,m} \;,\\
\mL_Fv^{n,\uparrow}_{l,m} &=\sqrt{[l-n-\tfrac{1}{2}][l+n+\tfrac{1}{2}]}\,v^{n+1,\uparrow}_{l,m} \;,\\
\mL_Fv^{n,\downarrow}_{l,m} &=\sqrt{[l-n+\tfrac{1}{2}][l+n+\tfrac{3}{2}]}\,v^{n+1,\downarrow}_{l,m} \;,
\end{align*}
a straightforward computation shows that: 
\begin{align*}
B_{l,m,n}=A_{l,m,n+1} &=
\sqrt{ [l-n-\tfrac{1}{2}][l+n+\tfrac{1}{2}] }\, \frac{ P^{11}_{l,m,n}P^{11}_{l,m,n+1} }{ \sqrt{P^{11}_{l,m,n}P^{11}_{l,m,n+1}} }
\\
&\phantom{=} +\sqrt{ [l-n+\tfrac{1}{2}][l+n+\tfrac{3}{2}] }\,
\frac{ P^{12}_{l,m,n}P^{12}_{l,m,n+1} } { \sqrt{P^{11}_{l,m,n}P^{11}_{l,m,n+1}} }
\; .
\end{align*}
Thus if the vector $w^{n,||}_{l,m}$ in the kernel of $pD_j^+$,
the vector $w^{n-1,||}_{l,m}$ is in the cockernel, so that
they give no contribution to the index of $pD_j^+p$,
and the index depends only on the vectors $v^{n,\downarrow}_{|n|-\frac{1}{2},m}$.
From the action above, one finds that for any $n<0$,
$$
\mL_Ev^{n,\downarrow}_{|n|-\frac{1}{2},m}=0 \;,\qquad
\mL_Fv^{n,\downarrow}_{|n|-\frac{1}{2},m}=\sqrt{[-2n]}v^{n+1,\downarrow}_{|n|-\frac{1}{2},m}\neq 0 \;.
$$
The vector \smash[b]{$v^{n+1,\downarrow}_{|n|-\frac{1}{2},m}$} is in the image of $p$
if $n+1<0$ and is in the kernel if $n+1>0$. Thus, all \smash[b]{$v^{n,\downarrow}_{|n|-\frac{1}{2},m}$}
belonging to $p(\HH_j^+\otimes\C^2)$ are in the kernel of $pD^+_jp$
while in the cockernel we have only
\smash[b]{$v^{-1/2,\downarrow}_{0,0}$},
and only in the case it belongs to $p(\HH_j^-\otimes\C^2)$, i.e.~when $j\in 2\N+\frac{1}{2}$. We distinguish then three cases:
\\[10pt]
1.~if $j=\frac{1}{2}$, then $\mathrm{Index}(pD^+_jp)=-1$;
\\[10pt]
2.~if $j=2k+\frac{3}{2}\in 2\N+\frac{3}{2}$, then
\begin{align*}
\mathrm{Index}(pD^+_jp) &=\sum_{n=-j+1,-j+3,\ldots,-1/2}(-2n)  \\
& =\sum_{i=0}^k(4i+1)=(2k+1)(k+1)
=\tfrac{1}{2}(j^2-\tfrac{1}{4})\; ;
\end{align*}
\\[2pt]
3.~if $j=2k+\frac{5}{2}\in 2\N+\frac{5}{2}$, then
\begin{align*}
\mathrm{Index}(pD^+_jp) &=\sum_{n=-j+1,-j+3,\ldots,-3/2}(-2n)-1 =\sum_{i=0}^k(4i+3)-1 \\ & =(2k+1)(k+2) =\tfrac{1}{2}(j^2-\tfrac{9}{4}) \;.
\end{align*}
Note that the equation at point 3.~gives the correct answer also
for $j=\frac{1}{2}$. Since for $k\in\N$, $(2k+1)(k+1)$
and $(2k+1)(k+2)$ are strictly positive, the index is never zero.\cvd
\end{proof}

As anticipated then:
\begin{cor}
Since for different values of $j$ we get different values of 
$\mathrm{Index}(pD^+_jp)$, the
spectral triples $(\OO(\CP^1_q),\HH_j,D_j,\gamma_j,J_j)$ in Prop.~\ref{jest} correspond to distinct K-homology
classes.
\end{cor}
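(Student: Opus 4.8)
The plan is to observe that $\mathrm{Index}(pD^+_jp)$ is, by construction, the value of the index pairing $\langle[D_j],[p]\rangle$ between the K-homology class $[D_j]\in K^0(\OO(\CP^1_q))$ of the spectral triple $(\OO(\CP^1_q),\HH_j,D_j,\gamma_j)$ of Prop.~\ref{jest} --- defined via its bounded transform $D_j(1+D_j^2)^{-1/2}$, a genuine even Fredholm module as recalled in \S\ref{sec:equivST} --- and the K-theory class $[p]\in K_0(\OO(\CP^1_q))$ of the defining projection \eqref{eq:P1}. Indeed, since $p$ has entries in $\OO(\CP^1_q)$ and $[D_j,\,\cdot\,]$ is bounded on $\OO(\CP^1_q)$, the twisted operator $pD^+_jp\colon p(\HH_j^+\otimes\C^2)\to p(\HH_j^-\otimes\C^2)$ is Fredholm, and its index is computed by $\tfrac12\mathrm{Tr}\big(\gamma_j\,F[F,p]\big)$ with $F$ the bounded transform amplified to $\HH_j\otimes\C^2$, a number depending only on the classes $[D_j]$ and $[p]$ (Chap.~4 of \cite{Con94}). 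Hence, if two of the triples $(\OO(\CP^1_q),\HH_j,D_j,\gamma_j,J_j)$ and $(\OO(\CP^1_q),\HH_{j'},D_{j'},\gamma_{j'},J_{j'})$ defined the same K-homology class, their pairings with $[p]$ --- that is, the two indices --- would coincide.

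It then remains to verify the elementary fact that $j\mapsto\mathrm{Index}(pD^+_jp)$ is injective on $j\in\N+\tfrac12$. I would read off from Prop.~\ref{prop:indexj} the two arithmetic progressions of values: writing $j=2k+\tfrac12$ with $k\in\N$ gives $\mathrm{Index}(pD^+_jp)=\tfrac12(j^2-\tfrac94)=2k^2+k-1=(2k-1)(k+1)$, while $j=2k+\tfrac32$ with $k\in\N$ gives $\mathrm{Index}(pD^+_jp)=\tfrac12(j^2-\tfrac14)=2k^2+3k+1=(2k+1)(k+1)$. Each of these is strictly increasing in $k$ --- the successive differences are $4k+3>0$ and $4k+5>0$ --- and the two families interleave: for every $k$ one has $(2k-1)(k+1)<(2k+1)(k+1)<(2k+1)(k+2)$, i.e.\ the value at $j=2k+\tfrac12$ is strictly less than the one at $j=2k+\tfrac32$, which is strictly less than the one at $j=2k+\tfrac52$. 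Hence along the whole chain $\tfrac12<\tfrac32<\tfrac52<\cdots$ the indices strictly increase and are therefore pairwise distinct; combined with the previous paragraph, distinct values of $j$ force distinct K-homology classes.

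I do not expect a real obstacle: granted Prop.~\ref{prop:indexj}, the corollary reduces to the invariance of the index pairing under K-homology equivalence together with the one-line arithmetic above. The only step deserving a word of care is the first, namely that $pD^+_jp$ genuinely computes a pairing with a \emph{class} in $K^0(\OO(\CP^1_q))$ rather than with the (non-invariant) unbounded datum $D_j$ itself; this is guaranteed by the passage to the bounded transform recalled in \S\ref{sec:equivST}, after which $\langle[D_j],[p]\rangle$ is a Kasparov product and thus manifestly an invariant of the two classes.
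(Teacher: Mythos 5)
Your argument is correct and is essentially the paper's own (largely tacit) reasoning: the index of $pD^+_jp$ is the pairing of the K-homology class of the spectral triple with $[p]\in K_0(\OO(\CP^1_q))$, hence an invariant of that class, so the pairwise distinct values from Prop.~\ref{prop:indexj} force distinct classes. You merely spell out what the paper leaves implicit, namely the homotopy invariance of the index pairing via the bounded transform and the elementary monotonicity/interleaving check that the two arithmetic families of index values never coincide.
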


Rational Poincar{\'e} duality allows one to prove several interesting
estimates on the eigenvalues of the twist of the Dirac operator $D$ with a Hermitian finitely
generated projective modules, (cf.~\cite[Thm.~1]{Mos97}).
Let us recall its definition \cite{Con94} for the
particular case of a real spectral triple $(\A,\HH,D,J)$ with $K_1(\A)=0$.
One says that the spectral triple satisfies rational Poincar{\'e} duality if
the pairing $\inner{\,,\,}_D:K_0(\A)\times K_0(\A)\to\Z$, defined by
$$
\inner{[P],[Q]}_D:=\mathrm{Index}(P\otimes JQJ^*)D_j^+(P\otimes JQJ^*) \;,
$$
is non-degenerate, for $P$ a $r\times r$ and $Q$ a $s\times s$ projection. 
Here,
$P\otimes JQJ^*$ is a projection on $\HH\otimes\C^{rs}$
and $D^+_j=D_j|_{\HH_+}\otimes \id_{\C^{rs}}$.

The generators of $K_0(\OO(\CP^1_q))\simeq\Z^2$ can be taken to be the class of the trivial projector $[1]$ 
corresponding to $(1,0)$ and the class of the projector $p$ in  \eqref{eq:P1} corresponding \cite{MNW91,H96} to $(1,1)$. 
Thus a generic element of $K_0(\OO(\CP^1_q))$ can be labelled, with $i,k\in\Z$, as 
$$
(i,k)=(i-k)\,[1]+k[p] \;.
$$

\begin{prop}
The spectral triples $(\OO(\CP^1_q),\HH_j,D_j,\gamma_j,J_j)$ of Prop.~\ref{jest}
satisfy rational Poincar{\'e} duality, for any $j\in\N+\frac{1}{2}$. In particular,
the pairing is given by the explicit formula:
\begin{equation}\label{eq:antilin}
\inner{(i,k),(i',k')}_{D_j}=(ki'-ik')\inner{[p],[1]}_{D_j} ,
\end{equation}
where $\inner{[p],[1]}_{D_j}=\mathrm{Index}(pD^+_jp)$
is the index computed in Prop.~\ref{prop:indexj}.
\end{prop}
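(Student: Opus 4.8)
The plan is to reduce the statement to a $2\times 2$ linear-algebra computation. First I would establish that the pairing $\inner{\,,\,}_{D_j}$ is $\Z$-bilinear: for a direct sum of projections the operator $\big((P\oplus P')\otimes J_jQJ_j^*\big)D_j^+\big((P\oplus P')\otimes J_jQJ_j^*\big)$ is block-diagonal and the Fredholm index is additive over blocks, and the same holds in the second slot; together with the homotopy-invariance of the index this shows that $\inner{\,,\,}_{D_j}$ descends to a well-defined $\Z$-bilinear form on $K_0(\OO(\CP^1_q))\simeq\Z^2$. Consequently it is completely determined by the four integers $\inner{[1],[1]}_{D_j}$, $\inner{[1],[p]}_{D_j}$, $\inner{[p],[1]}_{D_j}$, $\inner{[p],[p]}_{D_j}$, i.e. by its Gram matrix $M$ in the basis $\{[1],[p]\}$.

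The main point is that $M$ is antisymmetric, and here I would use that the spectral triple of Proposition~\ref{jest} has KO-dimension $2$, i.e. $\epsilon'=1$ and $\epsilon''=-1$ in \eqref{eq:yyy}. From $J_j\gamma_j=-\gamma_jJ_j$ the antilinear isometry $J_j$ exchanges $\HH_j^+$ and $\HH_j^-$, while $J_jD_j=D_jJ_j$ shows that it intertwines $D_j$ with itself; combined with $\big(D_j|_{\HH_j^+}\big)^*=D_j|_{\HH_j^-}$ this gives $D_j^+=J_j(D_j^+)^*J_j^{-1}$. Since conjugation by an antilinear isometry preserves the dimensions of kernel and cokernel while $\mathrm{Index}(T^*)=-\mathrm{Index}(T)$, we obtain at once $\inner{[1],[1]}_{D_j}=\mathrm{Index}(D_j^+)=0$. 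The same manipulation applied to the twisted operator: conjugating $(P\otimes J_jQJ_j^*)D_j^+(P\otimes J_jQJ_j^*)$ by $J_j$ (flipping the $\Z_2$-grading and the two auxiliary tensor factors), and using $J_j^2QJ_j^{-2}=Q$ together with the commutant and first-order conditions \eqref{eq:real} --- which put $\OO(\CP^1_q)$ and $J_j\OO(\CP^1_q)J_j^{-1}$ on a symmetric footing --- turns this operator into the adjoint of the one defining $\inner{[Q],[P]}_{D_j}$. Hence $\inner{[P],[Q]}_{D_j}=-\inner{[Q],[P]}_{D_j}$ for all projections, so $\inner{[1],[1]}_{D_j}=\inner{[p],[p]}_{D_j}=0$ and $\inner{[1],[p]}_{D_j}=-\inner{[p],[1]}_{D_j}$.

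To conclude, note that by definition $\inner{[p],[1]}_{D_j}=\mathrm{Index}\big((p\otimes 1)D_j^+(p\otimes 1)\big)=\mathrm{Index}(pD_j^+p)=:c_j$, which is the integer of Proposition~\ref{prop:indexj} and is never zero. Thus $M$ has $M_{11}=M_{22}=0$ and $M_{21}=-M_{12}=c_j$, so $\det M=c_j^2\neq 0$ and $\inner{\,,\,}_{D_j}$ is non-degenerate over $\Q$ --- over $\Z$ the form need not be unimodular (for instance $c_j=6$ when $j=7/2$), which is why one only claims \emph{rational} Poincar\'e duality. Finally, writing $(i,k)=(i-k)[1]+k[p]$ and $(i',k')=(i'-k')[1]+k'[p]$ and expanding bilinearly, the diagonal terms drop out and $\inner{(i,k),(i',k')}_{D_j}=\big(k(i'-k')-(i-k)k'\big)c_j=(ki'-ik')c_j$, which is exactly \eqref{eq:antilin}.

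The hard part will be the antisymmetry step of the second paragraph: one has to keep careful track of the antilinear operator $J_j$, of the two auxiliary $\C^{rs}$ factors, and of the commutant and first-order relations, in order to see that $J_j$-conjugation genuinely interchanges the roles of $P$ and $Q$ while replacing the index by its negative. The sign conventions for $\mathrm{Index}(T^*)$ and for antilinear conjugation must be tracked precisely, since it is exactly the KO-dimension-$2$ signs $(\epsilon,\epsilon',\epsilon'')=(-1,1,-1)$ of Proposition~\ref{jest} that force the intersection form to be skew-symmetric rather than symmetric.
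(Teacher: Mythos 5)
Your argument is correct and follows essentially the same route as the paper: skew-symmetry of the pairing plus bilinearity forces the form \eqref{eq:antilin}, and the non-vanishing of $\mathrm{Index}(pD_j^+p)$ from Prop.~\ref{prop:indexj} then gives rational non-degeneracy (your Gram-determinant argument $c_j^2\neq 0$ is equivalent to the paper's pairing of $(i,k)$ against $(k,-i)$). The only difference is that the paper outsources the skew-symmetry to the first part of \cite[Prop.~7.4]{Wag09}, whereas you prove it directly by conjugating the twisted operator with $J_j$ --- more precisely with $J_j$ tensored with a conjugation of the auxiliary $\C^{rs}$ factors, as you yourself flag --- which is exactly Wagner's mechanism, driven by the KO-dimension~$2$ signs $\epsilon'=1$, $\epsilon''=-1$ and the centrality of $J_j^2$.
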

\begin{proof}
One repeats the first part of the proof of \cite[Prop.~7.4]{Wag09} 
showing the antisymmetry of the pairing induced by the Dirac operator; 
hence by bilinearity it is always of the form \eqref{eq:antilin}, where $\inner{[p],[1]}_{D_j}=\mathrm{Index}(pD^+_jp)$
is the index computed in Prop.~\ref{prop:indexj}
and is different from zero for all values of $j$.
Since
$$
\inner{(i,k),(k,-i)}_{D_j}=(i^2+k^2)\inner{[p],[1]}_{D_j}
$$
is equal to zero only if $i=k=0$, the pairing is non-degenerate:
for any not zero element $(i,k)$ there is at least another not zero
element $(k,-i)$ such that the pairing of the two in not zero.
This concludes the proof. \cvd
\end{proof}

\section{A digression: calculi and connections}\label{sec:Cal}

\subsection{Covariant differential calculi}
A differential \mbox{$*$-calculus} over a \mbox{$*$-algebra} $\A$ is a differential
graded \mbox{$*$-algebra} $(\Omega^\bullet(\A),\dd)$
with $\Omega^0(\A)=\A$ and $\Omega^{k+1}(\A)=\mathrm{Span}\{a\hspace{1pt}\dd\hspace{1pt}\omega,\,a\in\A,\,\omega\in\Omega^k\}$, for all $k\geq 0$. Requesting a graded Leibniz rule, the differential
is uniquely determined by its restriction to $0$-forms.
The datum $(\mc{M},\delta)$ of an $\A$-bimodule and a real derivation
$\delta:\A\to\mc{M}$, i.e.~such that $\delta(a^*)=\delta(a)^*$,
is called a \emph{first order} \mbox{$*$-calculus}; 
generality is not lost by assuming that $\mc{M}=
\mathrm{Span}\{a\hspace{1pt}\dd\hspace{1pt}b\,,\,a,b\in\A\}$,
as if this is not the case one can replace $\mc{M}$ with the obvious
sub-bimodule. A canonical way to construct a 
differential \mbox{$*$-calculus} from $(\mc{M},\delta)$ is to define
$\Omega^0(\A)=\A$, $\Omega^1(\A)=\mc{M}$ and $\Omega^{k+1}(\A)=
\Omega^k(\A)\otimes_{\A}\Omega^1(\A)$, with product given by the
tensor product over $\A$; the derivation $\delta$ uniquely extends
to a differential $\dd$ giving a differential \mbox{$*$-calculus} on $\A$.
On the other hand, one can take any other graded \mbox{$*$-algebra}
$\Omega^\bullet(\A)$ having $\Omega^0(\A)=\A$ and $\Omega^1(\A)=\mc{M}$,
and (uniquely) extend the derivation using the graded Leibniz rule.

A derivation $\delta_u:\A\to\ker m$, with image the kernel of the
multiplication map $m:\A\otimes\A\to\A$ is given by
$\delta_ua:=a\otimes 1-1\otimes a\,,\,a\in\A$, and for any other
derivation $\delta:\A\to\mc{M}$ there exists a bimodule map
$\jmath:\ker m\to\mc{M}$ such that $\delta=\jmath\circ\delta_u$; in this
sense $\delta_u$ is universal (see e.g.~\cite{CQ95}).

If $\U$ is a Hopf \mbox{$*$-algebra},
one says that the calculus $(\Omega^\bullet(\A),\dd)$ is $\U$-covariant 
if $\Omega^\bullet(\A)$ is a graded
left $\U$-module \mbox{$*$-algebra} (i.e.~the action of $\U$ is a degree
zero map, thus respecting the grading) and $\dd$ commutes with the action of $\U$.
It follows that $\Omega^k(\A)$ is a left
$\A\rtimes\U$-module for any $k\geq 1$.

\subsection{Complex structures}

Suppose $(\Omega^\bullet(M),\dd)$ is the de Rham complex of a smooth manifold $M$.
Note that what here we denote by $\Omega^k$ are \emph{complex valued} $k$-forms.
From an algebraic point of view,
an almost complex structure is a decomposition $\Omega^1(M)=\Omega^{1,0}(M)\oplus
\Omega^{0,1}(M)$ of $1$-forms into a $(1,0)$ and a $(0,1)$ part, and this induces
a corresponding decomposition $\Omega^k(M)=\bigoplus_{r+s=k}\Omega^{r,s}(M)$.
The wedge product of forms
is a bi-graded product, i.e.~$\Omega^{p,q}(M)\wedge\Omega^{r,s}(M)\subset\Omega^{p+r,q+s}(M)$,
and the involution sends $\Omega^{r,s}(M)$ into $\Omega^{s,r}(M)$.
Denoting by $\pi_{r,s}$ the projection $\Omega^{r+s}(M)\to\Omega^{r,s}(M)$,
one can decompose the differential as
$$
\dd|_{\Omega^{p,q}(M)}=\sum_{r+s=p+q+1}\pi_{r,s}\circ\dd|_{\Omega^{p,q}(M)}
=\de+\deb+\ldots \;,
$$
where $\de=\pi_{p+1,q}\circ\dd|_{\Omega^{p,q}}$ has degree $(1,0)$ and
$\bar\de=\pi_{p,q+1}\circ\dd|_{\Omega^{p,q}}$ has degree $(0,1)$.
If $M$ is a \emph{complex manifold}, then $\dd=\de+\deb$ without the additional
terms (in general one may have terms of degree $(2,-1)$, $(-1,2)$, etc.).

From $\dd=\de+\deb$ and $\dd^2=0$ it follows that $\de^2=0$, $\deb^2=0$ and
$\de\deb+\deb\de=0$ (since $\dd^2$ is the sum of the three maps $\de^2$, $\deb^2$ and
$\de\deb+\deb\de$, and they have different degree). In fact, an almost complex manifold
is a complex manifold when one of the following equivalent conditions is satisfied
(\S1.3 of \cite{Wel80}):
\begin{itemize}
\item the Lie bracket of $(1,0)$ vector fields is of type $(1,0)$
(dually to the decomposition of $1$-forms one has the analogous decomposition of
vector fields);
\item $\dd=\de+\deb$;
\item $\deb^2=0$.
\end{itemize}
The second one is what we use to define complex noncommutative spaces.

\begin{df}\label{de:cs}
A complex structure on an algebra $\A$ equipped with a differential \mbox{$*$-calculus} $(\Omega^\bullet(\A),\dd)$ is a bi-graded 
\mbox{$*$-algebra} $\Omega^{\bullet,\bullet}(\A)$
with two linear maps $\de:\Omega^{\bullet,\bullet}(\A)\to\Omega^{\bullet+1,\bullet}(\A)$
and $\deb:\Omega^{\bullet,\bullet}(\A)\to\Omega^{\bullet,\bullet+1}(\A)$
such that $\Omega^k(\A)=\bigoplus_{p+q=k}\Omega^{p,q}(\A)$ and
$\dd=\de+\deb$.
\end{df}
The corresponding Dolbeault complex is the differential complex
\begin{equation}\label{eq:Dolbeault}
\A\stackrel{\deb}{\to}\Omega^{0,1}(\A)
\stackrel{\deb}{\to}\Omega^{0,2}(\A)
\stackrel{\deb}{\to}\ldots
\stackrel{\deb}{\to}\Omega^{0,n}(\A)
\stackrel{\deb}{\to}\ldots \:.
\end{equation}
Note that the condition that $\dd$ is a graded derivation is equivalent to both 
$\de$ and $\deb$ be graded derivations while 
$\dd(a^*)=\dd(a)^*$ is equivalent to $\deb a=\de(a^*)^*$.

The algebra of ``holomorphic elements'',
$\hO(\A) := \ker \left\{ \deb: \A \to \Omega^{(0,1)}(\A) \right\}$, is indeed an algebra over $\C$ by the Leibniz rule. Its elements  
will be referred to, if a bit loosely, as holomorphic functions. 

\subsection{Connections}\label{se:con}
Let $(\Omega^\bullet(\A),\dd)$ be a differential calculus over
a \mbox{$*$-algebra} $\A$ and $\E$ a right $\A$-module.
An ({\emph{affine}) \emph{connection} on $\E$
is a $\C$-linear map $\nabla:\E\to\E\otimes_{\A}\Omega^1(\A)$
satisfying the Leibniz rule, 
\begin{equation}
\nabla(\eta a)=\nabla(\eta)a+\eta\otimes_{\A}\dd a \;,\qquad\forall \; \eta\in\E \ a\in\A, \;.
\end{equation}
By the graded Leibniz rule, any connection is extended uniquely to a $\C$-linear
map $\nabla:\E\otimes_{\A}\Omega^\bullet(\A)\to\E\otimes_{\A}\Omega^{\bullet+1}(\A)$.
Due to the Leibniz rule, the \emph{curvature} $\nabla^2 : \E\to\E\otimes_{\A}\Omega^2(\A)$ 
is right $\A$-linear, $\nabla^2(\eta a)=\nabla^2(\eta)a$, i.e. it is
an element in $\mathrm{Hom}_\A(\E,\E\otimes_{\A}\Omega^2(\A))$.

Connections for to the universal differential calculus are called
themselves universal. Their importance is twofold: i) a universal connection
on a module $\E$ exists if and only if $\E$ is projective \cite{CQ95};
ii) given a universal connection $\nabla_u$ on $\E$ and a calculus $(\Omega^1(\A),\dd)$,
called $\jmath:\ker m\to\Omega^1(\A)$ the bimodule map intertwining the
differentials --- i.e.~such that $\dd=\jmath\circ\delta_u$ ---,
one constructs a connection $\nabla$ for the latter calculus
using the formula $\nabla:=(\id\otimes \jmath)\circ\nabla_u$.
For $\E=p\A^k$ a finitely generated projective module, this connection (also named the \emph{Grassmannian connection} of $\E$) 
is given by
$$
\nabla_p\eta=p\, \dd \eta \;,
$$
with $\dd$ acting diagonally on $\A^k$, and row-by-column multiplication is understood.
Being the space of all connection an affine space, any other connection differs from $\nabla_p$ by an element in
$\mathrm{Hom}_{\A}(\E,\E\otimes_{\A}\Omega^1(\A))$.  

Affine connections on left modules are defined in a similar manner.

If $\E$ is a bimodule, one defines a \emph{bimodule connection}
as a pair $(\nabla,\sigma)$ of a right module connection
$\nabla:\E\to\E\otimes_{\A}\Omega^1(\A)$ and a bimodule isomorphism
$\sigma:\Omega^1(\A)\otimes_{\A}\E\to\E\otimes_{\A}\Omega^1(\A)$ 
such that $\sigma^{-1}\circ\nabla$ is a left module connection. Explicitly,
this means (cf.~\cite[Sect.~8.5]{Lan02}) there is the \emph{left} Leibniz rule as well:
$$
\nabla(a\eta)=a\nabla(\eta)+\sigma(\dd a\otimes_{\A}\eta)
\;,\qquad\forall\;a\in\A,\eta\in\E \;.
$$

Given bimodule connections $(\nabla_i,\sigma_i)$ on $\A$-bimodules $\E_i$, $i=1,2$, 
a bimodule connection $(\nabla,\sigma)$ on $\E_1\otimes_{\A}\E_2$ can be defined (cf.~\cite[Prop.~2.12]{KLS10})
by $\nabla=(1\otimes\sigma_2)(\nabla_1\otimes 1)+1\otimes\nabla_2$ and $\sigma=(\id\otimes\sigma_2)(\sigma_1\otimes\id)$.

\subsection{Holomorphic structures on modules} \label{se:holcon}

Given a  complex structure on an algebra $\A$ as in Definition~\ref{de:cs}, 
a \emph{holomorphic connection} on a left $\A$-module $\E$ is simply a connection
$$
\nabla^{\deb}:\E\to\Omega^{0,1}(\A) \otimes_{\A}\E
$$
for the differential calculus $(\Omega^{0,\bullet}(\A),\deb)$.
The connection is called \emph{integrable} or
\emph{flat} if it curvature vanishes: $(\nabla^{\deb})^2=0$. 
In this case, the pair $(\E, \nabla^{\deb})$ is a \emph{holomorphic module} 
(cf.~\cite[Sect.~2]{KLS10}}). Similar definitions are available for right modules.

For an integrable connection, in analogy with \eqref{eq:Dolbeault},
one has a complex
$$
\E
\xrightarrow{ \; \nabla^{\deb}  \; } \Omega^{0,1}(\A) \otimes_{\A}\E 
\xrightarrow{ \; \nabla^{\deb}  \; } \ldots
\xrightarrow{ \; \nabla^{\deb}  \; } \Omega^{0,n}(\A) \otimes_{\A}\E 
\xrightarrow{ \; \nabla^{\deb}  \; } \dots \;.
$$
The zero-th cohomology group of this complex, $H^0(\E, \nabla^{\deb} )$,
will be called the ``space of holomorphic sections'' of $\E$. 
By the Leibniz rule it is, in fact, a (left) module over the algebra $\hO(\A)$ of holomorphic functions.
 
\section{The complex structure of $\CP^n_q$}\label{sec:5}

A first attempt to classify first order differential calculi
on $\OO(\CP^n_q)$ is in \cite{Wel00}, where it is proven that, for $n\geq 5$, there exists a
unique differential calculus if one requires some (pretty strong)
constraints. One of these is the requirement (cf.~\cite[Sect.~4.2]{Wel00}) 
that $\Omega^1(\CP^n_q)$ is a free left module of rank $n(n+2)$, quite a stong one, given that 
the cotangent bundle of $\CP^n$ is not parallelizable --- the module of sections is not free ---, 
and the rank is $n$ (as a complex vector bundle). 
In that paper there is also a discussion of first order calculi on $\OO(\CP^n_q)$
that are the restriction of calculi on $\OO(S^{2n+1}_q)$.
Few years later, it was proven
in \cite{HK04} that for $\CP^n_q$ there is only one covariant (irreducible, finite-dimensional) first order \mbox{$*$-calculus}.
Higher order differential calculi are studied in \cite{HK06b}. 
As already mentioned, this first order differential calculus
can be realized by commutators with a ``Dirac operator'' \cite{Kra04}. The calculus was \mbox{re-obtained} in
\cite{Bua11a} as the restriction of a distinguished quotient of the bicovariant
calculus on $\OO(SU_q(n+1))$. 
 
We then proceed to complex and related holomorphic structures on $\CP^n_q$. 
This started in \cite{KLS10} for $\CP^1_q$, later generalized to $\CP^2_q$ in \cite{KM11a} and $\CP^n_q$ 
in \cite{KM11b}.

\subsection{The Dolbeault complex}\label{sec:Dol}

For $\CP^n_q$, the differential Dolbeault complex as in \eqref{eq:Dolbeault}
has been constructed in \cite{DD09}. Roughtly speaking, forms
$\Omega^{0,k}(\CP^n_q)$ are given by the equivariant module associated to
the irreducible $*$-representation of $\Uq{su}{n}$ with highest weight
$$
(\,\stackrel{k-1\;\mathrm{times}}{\overbrace{0,\ldots,0}},1,\!\!\!\stackrel{n-k-1\;\mathrm{times}}{\overbrace{0,\ldots,0}}\!\!) \;,
$$
for any $1\leq k\leq n-1$, extended, the representation, to $\Uq{u}{n}$ in a way
that the element $\hat{K} := (K_1 K_2 ... K_n)^{\frac{2}{n+1}}$ (cf.~eq.~(3.1) in \cite{DD09}) is $q^k$  
times the identity.
The module $\Omega^{0,n}(\CP^n_q)$ is simply $\Gamma_{-n-1}$.
A $(0,k)$-form is a vector $\omega=(\omega_{i_1,i_2,\ldots,i_k})$
having components $\omega_{i_1,i_2,\ldots,i_k}\in\OO(SU_q(n+1))$,
with labels satisfying the contraints $1\leq i_1<i_2<\ldots<i_k\leq n$,
and transforming under the $\mc{L}$-action of $\Uq{u}{n}$ according
to the above-mentioned representation. The product of forms is denoted
by $\wprod$ and given by
$$
(\omega\wprod\omega')_{i_1,\ldots,i_{h+k}}=\sum\nolimits_{p\in S^{(h)}_{h+k}}(-q^{-1})^{||p||}\,
\omega_{i_{p(1)},\ldots,i_{p(h)}}\,\omega'_{i_{p(h+1)},\ldots,i_{p(h+k)}}
$$
for all $\omega\in\Omega^{0,h}(\CP^n_q)$ and $\omega'\in\Omega^{0,k}(\CP^n_q)$,
and for all $h,k=0,\ldots,n$ with $h+k\leq n$.
Moreover, we set $v\wprod w:=0$ if $h+k>n$.
Here $S^{(h)}_{h+k}$ are permutations whose inverse
is a $(h,k)$-shuffle, and $||p||$ is the length of the permutation $p$.
The details in \cite{DD09} show that the above is a well-defined associative product. 
Notice that, for any $a\in\OO(\CP^2_q)$ it holds that
$$
\omega\wprod a\, \omega'=\omega \, a\wprod\omega' ,
$$
meaning that the product is a quotient
of the free tensor product over $\OO(\CP^2_q)$.

Any product of $1$-forms, $\omega^i$, $i=1,\ldots,k$,
is also easy to describe:
$$
(\omega^1\wprod \omega^2\wprod\ldots\wprod \omega^k)_{i_1,\ldots,i_k}
=\sum\nolimits_{p\in S_k}(-q^{-1})^{||p||}\omega^1_{i_{p(1)}}
\omega^2_{i_{p(2)}}\ldots \omega^k_{i_{p(k)}} \;,
$$
with $S_k$ the group of permutations of $k$ objects. For $q=1$
this is the antisymmetric tensor product over the algebra.

A graded derivation $\deb: \Omega^{0,k}(\CP^n_q) \to \Omega^{0,k+1}(\CP^n_q)$, of the form 
\begin{equation}\label{ded}
\deb = \sum\nolimits_{1}^n \mL_{\hat{K} X_i} \;,
\end{equation}
for suitable elements $X_i \in \Uq{u}{n+1}$ \cite[eq.~(5.7)]{DD09},
squares to zero, $(\deb)^2=0$, thus giving a covariant (Dolbeault-like) differential calculus $(\Omega^{0,\bullet}(\OO(\CP^n_q)),\deb)$.
Moreover, the map $h \to  \mL_h $ being a $*$-representation, the Hermitian conjugate operator,
$$
\deb^\dag=\sum\nolimits_{1}^n \mL_{ X_i^* \hat{K} } \;,
$$ 
maps $\Omega^{0,k}(\CP^n_q)$ to $\Omega^{0,k-1}(\CP^n_q)$ and squares to zero as well: $(\deb^\dag)^2=0$.
One needs stressing that the above calculus is not a \mbox{$*$-calculus}. 
For later use, we mention that the element $X_i$ above are given by \cite[Lemma~3.13]{DD09}:
\begin{equation}\label{xnm}
X_i :=N_{i,n}M_{i,n}^*
\end{equation}
with $N_{i,n}$ the elements \cite[eq.~(3.7)]{DD09}:
$$
N_{i,n} :=(K_i K_{i+1} \cdots K_n)\, \hat K ^{-1} \;, \qquad i=1, \dots n \,, 
$$
whereas the elements $M_{i,n}$ are defined recursively \cite[eq.~(3.5)]{DD09}) by:
$$
M_{i,n} = E_i M_{i+1,n} - q^{-1} M_{i+1,n} E_i   \;, \qquad i=1, \dots n \,.
$$

\subsection{Hodge decomposition and Dolbeault cohomology}\label{sec:hodge}
We are ready to compute the  cohomology groups $H_{\deb}^\bullet(\CP^n_q)$ of
the complex $(\Omega^{0,\bullet}(\OO(\CP^n_q)),\deb)$
by generalizing the analogue of the Hodge decomposition theorem envisaged
in \cite{DDL08b} for the case $n=2$.
Let 
$$
\Delta_{\deb}=(\deb+\deb^\dag)^2 
$$
be the Hodge Laplacian. We call \emph{harmonic $(0,k)$-forms} the collection:
$$
\mathfrak{H}^{0,k}(\CP^n_q)=\big\{\omega\in\Omega^{0,k}(\CP^n_q)\,\big|\,\Delta_{\deb}
\,\omega=0\big\} \;.
$$
Thus $\omega\in\Omega^{0,k}(\CP^n_q)$ is harmonic if and only if it is in the
kernel of $\deb+\deb^\dag$ ($\ker L=\ker L^2$ for any linear operator $L=L^*$).
Being $(\deb+\deb^\dag)(\omega)$ the sum of two pieces of different degree, both must vanish for
$(\deb+\deb^\dag)(\omega)$ to be zero: hence, $\omega$ is harmonic if and only if $\deb\omega=\deb^\dag\omega=0$. 

\begin{prop}\label{prop:harm}
For all $k$, there is an orthogonal decomposition
\begin{equation}\label{eq:ort}
\Omega^{0,k}(\CP^n_q)=\mathfrak{H}^{0,k}(\CP^n_q)\oplus\deb\Omega^{0,k-1}(\CP^n_q) \oplus\deb^\dag\!\Omega^{0,k+1}(\CP^n_q) \;.
\end{equation}
In particular, there is
exactly one harmonic form for each cohomology class:
$$
H_{\deb}^k(\CP^n_q)\simeq\mathfrak{H}^{0,k}(\CP^n_q) \;.
$$
\end{prop}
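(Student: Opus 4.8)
The plan is to run the classical Hodge-theoretic argument, which in this setting becomes finite-dimensional linear algebra once the $\Uq{su}{n+1}$-symmetry is exploited. First I would fix on each $\Omega^{0,k}(\CP^n_q)$ the pre-Hilbert inner product $\inner{\cdot,\cdot}$ obtained by composing its Hermitian structure with the faithful Haar state, as in \eqref{eq:inform}. Since $h\mapsto\mL_h$ is a unitary $*$-representation and $\hat K=(K_1\cdots K_n)^{\frac{2}{n+1}}$ is positive and self-adjoint --- so that $(\hat K X_i)^{*}=X_i^{*}\hat K$ --- the operator $\deb^\dag=\sum_i\mL_{X_i^{*}\hat K}$ is honestly the adjoint of $\deb=\sum_i\mL_{\hat K X_i}$ for these inner products. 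I then assemble the total complex $\Omega^{0,\bullet}:=\bigoplus_{k=0}^{n}\Omega^{0,k}(\CP^n_q)$ and set $D:=\deb+\deb^\dag$, a symmetric operator with $D^{2}=\deb\deb^\dag+\deb^\dag\deb=\Delta_{\deb}$, which preserves each degree.

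The crucial step is the reduction to finite dimensions. Being built from the $\mL$-action of $\Uq{u}{n+1}$, the maps $\deb$ and $\deb^\dag$ --- hence also $D$ and $\Delta_{\deb}$ --- commute with the canonical left $\Uq{su}{n+1}$-action $\az$, which preserves the grading and is unitary for $\inner{\cdot,\cdot}$; therefore $\Omega^{0,\bullet}$ is an \emph{orthogonal} direct sum of its $\az$-isotypical components, each of them preserved by $\deb$, $\deb^\dag$, $D$ and $\Delta_{\deb}$. Each such component is moreover \emph{finite-dimensional}: the module $\Omega^{0,k}(\CP^n_q)$ is the equivariant module $\E(\sigma_k)$ of a finite-dimensional representation $\sigma_k$ of $\Uq{u}{n}$ (with $\Omega^{0,0}(\CP^n_q)=\Gamma_0$ and $\Omega^{0,n}(\CP^n_q)=\Gamma_{-n-1}$), so by the Peter--Weyl decomposition of $\OO(SU_q(n+1))$ each irreducible of $\Uq{su}{n+1}$ appears in it with multiplicity at most $\dim\sigma_k<\infty$. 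On a fixed finite-dimensional component the spectral theorem for the self-adjoint operator $D$ gives the orthogonal splitting into $\ker D\oplus\mathrm{ran}\,D$ with $\ker D=\ker D^{2}=\ker\Delta_{\deb}$; summing over components, $\Omega^{0,\bullet}=\ker D\oplus\mathrm{ran}\,D$ orthogonally.

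It then remains to read this off degree by degree and refine it. In degree $k$ one has $\ker D\cap\Omega^{0,k}(\CP^n_q)=\mathfrak{H}^{0,k}(\CP^n_q)$, since $D\omega=\deb\omega+\deb^\dag\omega$ has components in the two distinct degrees $k+1$ and $k-1$ and both must vanish (as already observed before the statement). For the image, $\mathrm{ran}\,D\cap\Omega^{0,k}(\CP^n_q)=\deb\,\Omega^{0,k-1}(\CP^n_q)+\deb^\dag\Omega^{0,k+1}(\CP^n_q)$: ``$\supseteq$'' is immediate, while for ``$\subseteq$'' one writes $\eta=\sum_j\eta_j$ with $\eta_j\in\Omega^{0,j}(\CP^n_q)$ and extracts the degree-$k$ part of $D\eta$, namely $\deb\eta_{k-1}+\deb^\dag\eta_{k+1}$; the two summands are mutually orthogonal because $\inner{\deb\alpha,\deb^\dag\beta}=\inner{\deb^{2}\alpha,\beta}=0$. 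This is precisely the decomposition \eqref{eq:ort}. Finally, $\deb$ annihilates both $\mathfrak{H}^{0,k}(\CP^n_q)$ and $\deb\,\Omega^{0,k-1}(\CP^n_q)$ (the latter by $\deb^{2}=0$), while if $\omega=\xi+\deb\alpha+\deb^\dag\beta$ lies in $\ker\deb$ then $\deb\deb^\dag\beta=0$, so $\|\deb^\dag\beta\|^{2}=\inner{\beta,\deb\deb^\dag\beta}=0$ and $\deb^\dag\beta=0$; hence $\ker\bigl(\deb|_{\Omega^{0,k}(\CP^n_q)}\bigr)=\mathfrak{H}^{0,k}(\CP^n_q)\oplus\deb\,\Omega^{0,k-1}(\CP^n_q)$, and quotienting by $\deb\,\Omega^{0,k-1}(\CP^n_q)$ gives $H^{k}_{\deb}(\CP^n_q)\simeq\mathfrak{H}^{0,k}(\CP^n_q)$.

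The only non-formal ingredients sit in the finite-dimensional reduction: (i) that $\deb^\dag$ is genuinely the adjoint of $\deb$ for the Haar-state inner products on the $\Omega^{0,k}(\CP^n_q)$, which comes down to unitarity of $\mL$ and $\hat K=\hat K^{*}>0$ together with the bookkeeping of the Hermitian structures; and (ii) the finite multiplicity of each $\Uq{su}{n+1}$-irreducible in the Dolbeault complex, which follows from the equivariant description of the modules $\Omega^{0,k}(\CP^n_q)$ in \cite{DD09}. I expect (ii) to be the main point to nail down; once both are in hand the rest is the textbook Hodge argument, generalizing the $n=2$ case of \cite{DDL08b}.
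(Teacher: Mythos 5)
Your argument is correct and follows the same Hodge-theoretic template as the paper: the same Haar-state inner product, the same orthogonality of the two images via $\deb^2=0$ and adjointness of $\deb^\dag$, the same characterization of harmonic forms as those killed by both $\deb$ and $\deb^\dag$, and the same final identification of $H^k_{\deb}$ with the harmonics. The genuine difference lies in how the spanning part of \eqref{eq:ort} is secured. The paper stays entirely at the level of the pre-Hilbert inner product: it proves that a $(0,k)$-form is orthogonal to both $\deb\Omega^{0,k-1}(\CP^n_q)$ and $\deb^\dag\Omega^{0,k+1}(\CP^n_q)$ precisely when it is harmonic (by non-degeneracy, i.e.\ faithfulness of the Haar state) and reads \eqref{eq:ort} off from that; since the spaces involved are not complete, the passage from ``the orthocomplement of the sum of the two images is $\mathfrak{H}^{0,k}(\CP^n_q)$'' to ``the three subspaces span $\Omega^{0,k}(\CP^n_q)$'' is left implicit there. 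Your reduction to finite-dimensional $\Uq{su}{n+1}$-isotypical components (finite multiplicities by Peter--Weyl, each $\Omega^{0,k}(\CP^n_q)$ being the equivariant module of a finite-dimensional $\Uq{u}{n}$-representation), followed by the spectral theorem for $D=\deb+\deb^\dag$ on each block, is exactly what makes that step rigorous; what it costs is the (easy) verification that $\deb,\deb^\dag$ commute with the unitary left action, which holds since they are built from the commuting right ($\mL$) action. Two micro-points to tidy: the inclusion $\deb\Omega^{0,k-1}(\CP^n_q)+\deb^\dag\Omega^{0,k+1}(\CP^n_q)\subseteq\mathrm{ran}\,D$ is not quite ``immediate'' as stated, but follows on each finite-dimensional block because both images are orthogonal to $\ker D$ and hence lie in $(\ker D)^\perp=\mathrm{ran}\,D$; and when you extract the degree-$k$ component of $\omega=\xi+D\eta$ you are using that $\ker D=\ker\Delta_{\deb}$ is a graded subspace, which holds because $\Delta_{\deb}$ preserves degree. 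Neither affects correctness.
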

\begin{proof}
With the inner product in \eqref{eq:inform}, given two forms $\omega_{1}, \omega_{2}$ of degree $k-1$ and $k+1$ respectively, we have that
$$
\inner{\deb\omega_1,\smash[t]{\deb^\dag\!\omega_2}}=\inner{\smash[t]{\deb^2\omega_1},\omega_2}=0 \;.
$$
Thus, the spaces $\deb\Omega^{0,k-1}(\CP^n_q)$ and $\smash[t]{\deb^\dag\!\Omega^{0,k+1}(\CP^n_q)}$
are orthogonal subspaces of $\Omega^{0,k}(\CP^n_q)$.
It remains to show that a $(0,k)$-form $\eta$ is orthogonal to both
$\deb\Omega^{0,k-1}(\CP^n_q)$ and \smash[t]{$\deb^\dag\!\Omega^{0,k+1}(\CP^n_q)$} if and only if
it is harmonic. This follows from non-degeneracy of the inner product
(i.e.~from the faithfulness of the Haar state).
We have:
$$
\inner{\eta,\deb\omega_1}=\inner{\smash[t]{\deb^\dag}\!\eta,\omega_1}=0 \;, \qquad
\inner{\eta,\smash[t]{\deb^\dag}\!\omega_2}=\inner{\deb\eta,\omega_2}=0 \;,
$$
for all $\omega_1\in\Omega^{0,k-1}(\CP^n_q)$ and $\omega_2\in\Omega^{0,k+1}(\CP^n_q)$
if and only if \smash[t]{$\deb\eta=\deb^\dag\!\eta=0$}, that is
if and only if $\eta$ is harmonic. This establishes the decomposition in \eqref{eq:ort}.

Forms in the subspace $\mathfrak{H}^{0,k}(\CP^n_q)\oplus\deb\Omega^{0,k-1}(\CP^n_q)$
are $\deb$-closed by construction. 
On the other hand,
a $\deb$-closed form \rule{0pt}{10pt}\smash[t]{$\omega\in\deb^\dag\!\Omega^{0,k+1}(\CP^n_q)$} must be harmonic (since
\smash[t]{$(\deb^\dag)^2=0$}), and by orthogonality of the decomposition it must be zero.
It follows that
$$
H_{\deb}^k(\CP^n_q)=\big\{\mathfrak{H}^{0,k}(\CP^n_q)\oplus\deb\Omega^{0,k-1}(\CP^n_q)\big\}
 \big/ \deb\Omega^{0,k-1}(\CP^n_q)=\mathfrak{H}^{0,k}(\CP^n_q) \;,
$$
and this concludes the proof.\cvd
\end{proof}

We now compute $H_{\deb}^k(\CP^n_q)
\simeq\mathfrak{H}^{0,k}(\CP^n_q)=\smash[b]{\ker\Delta_{\deb}\big|_{\Omega^{0,k}(\CP^n_q)}}$. 

\begin{prop}
The Dolbeault cohomology groups of $\CP^n_q$ are given by:
$$
H_{\deb}^0(\CP^n_q)=\C \;,\qquad
H_{\deb}^k(\CP^n_q)=0\quad\forall\;1\leq k\leq n \;.
$$
\end{prop}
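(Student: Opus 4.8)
The plan is to deduce everything from Proposition~\ref{prop:harm}, which reduces the computation of $H_{\deb}^k(\CP^n_q)$ to the determination of the harmonic forms $\mathfrak{H}^{0,k}(\CP^n_q)=\{\omega\in\Omega^{0,k}(\CP^n_q):\deb\omega=\deb^\dag\omega=0\}=\ker\Delta_{\deb}\big|_{\Omega^{0,k}(\CP^n_q)}$. The key structural input is that, by \eqref{ded}, both $\deb$ and $\deb^\dag$ are implemented by the $\mL$-action of $\Uq{su}{n+1}$, hence commute with the left $\az$-action; so the whole Dolbeault complex, and in particular $\Delta_{\deb}$, is a complex of $\Uq{su}{n+1}$-modules for the $\az$-action.

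First I would decompose each form module under the $\az$-action: by the Peter--Weyl decomposition of $\OO(SU_q(n+1))$ together with Frobenius reciprocity for the quantum homogeneous space (the $n=1$ instance being \eqref{deco}), one has $\Omega^{0,k}(\CP^n_q)\simeq\bigoplus_\lambda V_\lambda\otimes\mathrm{Hom}_{\Uq{u}{n}}(\tau_k,V_\lambda)$, where $\tau_k$ is the $\Uq{u}{n}$-representation used in \S\ref{sec:Dol} to build $(0,k)$-forms (highest weight $(0,\ldots,0,1,0,\ldots,0)$ with the $1$ in the $k$-th slot, and $\hat K$ acting by $q^k$). Only those irreducibles $V_\lambda$ occur whose restriction to $\Uq{u}{n}$ contains $\tau_k$. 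Since $\tau_0$ is the trivial representation while $\tau_k$ is non-trivial for $1\leq k\leq n$, the trivial representation $V_0$ appears in $\Omega^{0,0}(\CP^n_q)=\OO(\CP^n_q)$ with multiplicity one (the constants), and does not appear in $\Omega^{0,k}(\CP^n_q)$ for $k\geq 1$.

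Next I would identify $\Delta_{\deb}=(\deb+\deb^\dag)^2$ with the square of the Dolbeault--Dirac operator $D_0$ of \S\ref{sec:equivST} and compute its spectrum exactly as its spectrum was computed in \cite{DD09}, namely by relating $(\deb+\deb^\dag)^2$ to the quadratic Casimir $\mc{C}_q$ of $\Uq{su}{n+1}$: reordering the double sum coming from \eqref{ded} and \eqref{xnm} with the commutation relations of $\Uq{su}{n+1}$, and using that $\hat K$ acts by $q^k$ on $\Omega^{0,k}(\CP^n_q)$, yields an identity of the form $\Delta_{\deb}\big|_{\Omega^{0,k}(\CP^n_q)}=\mL_{\mc{C}_q}-c_k\,\id$ for an explicit constant $c_k$ (a $q$-number depending only on $k$ and $n$, reflecting the curvature of the Grassmannian connection). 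As $\mc{C}_q$ is central, $\mL_{\mc{C}_q}$ acts as a scalar $\mc{C}_q(\lambda)$ on the whole $V_\lambda$-isotypic component, so $\Delta_{\deb}$ is the scalar $\mc{C}_q(\lambda)-c_k$ there, independently of the multiplicity. One then checks that $\mc{C}_q(\lambda)-c_k>0$ for every non-trivial $V_\lambda$ occurring in $\Omega^{0,k}(\CP^n_q)$, the value vanishing only for $(k,\lambda)=(0,\mathrm{trivial})$: for $0<q\leq 1$ this quantity is a $q$-deformation (built from strictly positive $q$-numbers $[m]$, $m\geq1$) of the corresponding Fubini--Study eigenvalue on $(0,\bullet)$-forms of $\CP^n$, which classically is $\geq0$ and vanishes only on constants. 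Granting this, $\mathfrak{H}^{0,k}(\CP^n_q)=0$ for $1\leq k\leq n$ and $\mathfrak{H}^{0,0}(\CP^n_q)=V_0=\C$; by Proposition~\ref{prop:harm} this gives $H_{\deb}^0(\CP^n_q)=\C$ and $H_{\deb}^k(\CP^n_q)=0$ for $1\leq k\leq n$ (equivalently, in degree zero one reads off $\hO(\CP^n_q)=\ker\deb=\C$).

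The main obstacle is the third step: pinning down the identity $\Delta_{\deb}=\mL_{\mc{C}_q}-c_k\,\id$ with the correct constant, which requires a careful reordering of $\sum_{i,j}\bigl(\hat K X_iX_j^*\hat K+X_j^*\hat K^2X_i\bigr)$ using the commutation relations of the $E_i,F_i,K_i$ hidden inside $M_{i,n},N_{i,n}$; and then verifying the strict positivity of the eigenvalues $\mc{C}_q(\lambda)-c_k$ on the weights $\lambda$ that actually occur, a finite combinatorial check on $q$-numbers mirroring the classical Bochner--Kodaira computation on $\CP^n$. Everything else (Proposition~\ref{prop:harm}, the isotypic decomposition, the bookkeeping of which $V_\lambda$ carry a trivial $\Uq{u}{n}$-type) is routine.
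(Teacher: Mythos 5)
Your overall strategy --- reduce to harmonic forms via Prop.~\ref{prop:harm}, then control $\Delta_{\deb}$ through the right-action elements $X_i$ and the Casimir --- is the same as the paper's, and in degree zero your argument is essentially identical to it: the paper also uses the decomposition $\Omega^{0,0}(\CP^n_q)\simeq\bigoplus_{m\in\N}V_{(m,0,\ldots,0,m)}$ (Prop.~5.5 of \cite{DD09}) and the Casimir eigenvalue $[m][m+n]$, which vanishes only at $m=0$, to get $H^0_{\deb}(\CP^n_q)=\C$. Where you diverge is in degrees $1\leq k\leq n$: you propose a Parthasarathy-type identity $\Delta_{\deb}\big|_{\Omega^{0,k}}=\mL_{\mc{C}_q}-c_k$ together with a verification that $\mc{C}_q(\lambda)-c_k>0$ on every isotypic component that actually occurs, and you explicitly ``grant'' both the identity and the inequality rather than prove them; since that is the entire content of the vanishing statement, as written your argument has a gap exactly there. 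The paper sidesteps this: Lemma~6.3 of \cite{DD09} gives $\Delta_{\deb}\,\omega=\omega\za\bigl(\sum_{i=1}^n q^{-2i}X_iX_i^*+q^{-n-k}[k][n+1]\bigr)$ on $\Omega^{0,k}(\CP^n_q)$, a positive operator plus the constant $q^{-n-k}[k][n+1]$ which is strictly positive for $k\geq 1$, so harmonic forms vanish in positive degree with no branching data and no Casimir eigenvalue estimates at all; the Casimir relation (Lemma~6.5 of \cite{DD09}) enters only at $k=0$, where the constant is zero. Your route would very likely succeed --- it mirrors the spectral computation of \cite{DD09}, and irreducibility of the isotropy representation $\tau_k$ does make a scalar $c_k$ plausible --- but completing it means redoing the reordering of $\sum_{i,j}\bigl(\hat K X_iX_j^*\hat K+X_j^*\hat K^2X_i\bigr)$ and then checking the inequality over the infinitely many highest weights occurring in each $\Omega^{0,k}$ (a uniform estimate, not the ``finite combinatorial check'' you describe), i.e.\ reproving what the cited lemmas already encapsulate. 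Note also that the identity should carry an overall positive factor (in degree zero the paper finds $q^{n+1}\Delta_{\deb}\,\omega=\mc{C}_q\az\omega$), harmless for kernels but worth stating correctly.
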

\begin{proof}
Lemma 6.3 of \cite{DD09}, for $N=0$ and with $\ell$ replaced by $n$, gives:
$$
\Delta_{\deb}\,\omega=\omega\za\left(\textstyle{\sum_{i=1}^n}q^{-2i}X_iX_i^*+q^{-n-k}[k][n+1]\right) \;,
$$
for any $\omega\in\Omega^{0,k}(\CP^n_q)$, and with $X_i$ the elements making up the operator $\deb$ as in \eqref{ded}.
Since the right hand side is a sum of two positive operators,
$\Delta_{\deb}\,\omega=0$ if and only if one has both $\sum_{i=1}^nq^{-2i}\omega\za X_iX_i^*=0$
and $[k][n+1]=0$. The latter condition implies $H_{\deb}^k(\CP^n_q)=\ker\Delta_{\deb}\big|_{\Omega^{0,k}(\CP^n_q)}=0$
for any $k\neq 0$. 

For the remaining $k=0$ case, Lemma 6.5 of \cite{DD09}, for $N=k=0$, gives:
$$
\za\left(\mathcal{C}_q-\textstyle{\sum_{i=1}^n}q^{n+1-2i}X_iX_i^*\right)\!\big|_{\Omega^{0,0}(\CP^n_q)}
=\frac{q^{-n}+q[n]-[n+1]}{(q-q^{-1})^2}=0
 \;.
$$
Therefore, for any $\omega\in\Omega^{0,0}(\CP^n_q)$ it holds that 
$$
q^{n+1}\Delta_{\deb}\,\omega=\omega\za\mathcal{C}_q=\mathcal{C}_q\az\omega \;,
$$
the second equality following from the fact that
for central elements the left and right canonical actions coincide (cf.~the proof of Lemma 3.1 of \cite{DDL08b}).

Now, as left $\Uq{su}{n+1}$-modules, \cite[Prop.~5.5]{DD09} yields the equivalence:
$$
\Omega^{0,0}(\CP^n_q)\simeq
\bigoplus\nolimits_{m\in\N}V_{(m,0,\ldots,0,m)} \;,
$$
where $V_{(m,0,\ldots,0,m)}$ is the vector space carrying the irreducible representation of highest weight
$(m,0,\ldots,0,m)$.
Finally, from  \cite[Prop.~3.3]{DD09} the restriction of $\mathcal{C}_q$
to this representation is $[m][m+n]$ times the identity operator, and vanishes if and only if $m=0$.
Thus, $H_{\deb}^0(\CP^n_q)=V_{(0,0,\ldots,0)}\simeq\C$ coincides with the
trival representation. This concludes the proof.\cvd
\end{proof}

\subsection{Holomorphic modules}\label{se:cpnhs}
As in \S\ref{se:holcon}, a holomorphic connection on a $\A(\CP^n_q)$-module $\E$ is a connection
associated to the calculus $(\Omega^{0,\bullet}(\CP^n_q),\deb)$.
For the modules $\E=\Gamma_N$ such a connection, 
that here we denote by $\nabla^{\deb}_{\!N}$, was given in \cite[Sect.~6]{DD09}.
Indeed, as discussed in \cite[Sect.~5]{KM11b} these are bimodule connections,
and, using their isomorphism 
$\lambda_N:\Gamma_N\otimes_{\OO(\CP^n_q)}\Omega^{0,1}\to \Omega^{0,1}\otimes_{\OO(\CP^n_q)}\Gamma_N$, 
one passes from the left to the right version.
We need a preliminary lemma.

\begin{lem}\label{lemma:5.3}
For any $\eta\in\Gamma_N$, $\nabla^{\deb}_{\!N}\eta$ is the vector with components
\begin{equation}\label{eq:expconn}
(\nabla^{\deb}_{\!N}\eta)_i=q^{\frac{N}{2}-1}\eta\za F_nF_{n-1}\ldots F_i \;,
\qquad i=1,\ldots,n \;.
\end{equation}
\end{lem}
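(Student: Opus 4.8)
The plan is to start from the form of the connection $\nabla^{\deb}_{\!N}$ given in \cite[Sect.~6]{DD09}: exactly as $\deb$ acts on $\Omega^{0,0}(\CP^n_q)=\OO(\CP^n_q)$ in \eqref{ded}, the connection acts on $\eta\in\Gamma_N$ componentwise by the $\mL$-action of the elements $\hat{K}X_i$, that is $(\nabla^{\deb}_{\!N}\eta)_i=\mL_{\hat{K}X_i}(\eta)$ for $i=1,\dots,n$. Using $N_{i,n}=(K_iK_{i+1}\cdots K_n)\hat{K}^{-1}$ from \eqref{xnm} and the fact that $\hat{K}$ commutes with the $K_j$, one rewrites $\hat{K}X_i=(K_iK_{i+1}\cdots K_n)M_{i,n}^*$, so that $(\nabla^{\deb}_{\!N}\eta)_i=\mL_{K_i\cdots K_n}\bigl(\mL_{M_{i,n}^*}\eta\bigr)$. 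Converting the left action into the right one through $\mL_xa=a\za S^{-1}(x)$ then reduces everything to computing $\eta\za S^{-1}(M_{i,n}^*)$ and afterwards applying the group-like operator $\mL_{K_i\cdots K_n}$.

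The heart of the argument is the identity
$$
\eta\za S^{-1}(M_{i,n}^*)=c\,\bigl(\eta\za F_nF_{n-1}\cdots F_i\bigr)\;,\qquad i=1,\dots,n\;,
$$
with $c$ a nonzero constant which the recursion below carries through unchanged, so it is independent of $i$; I would prove it by downward induction on $i$. The base case $i=n$ is immediate since $M_{n,n}^*=F_n$. For the inductive step, taking $*$ of the recursion $M_{i,n}=E_iM_{i+1,n}-q^{-1}M_{i+1,n}E_i$ gives $M_{i,n}^*=M_{i+1,n}^*F_i-q^{-1}F_iM_{i+1,n}^*$, and applying the anti-algebra map $S^{-1}$ (with $S^{-1}(F_i)=-qF_i$) yields
$$
S^{-1}(M_{i,n}^*)=-q\,F_i\,S^{-1}(M_{i+1,n}^*)+S^{-1}(M_{i+1,n}^*)\,F_i\;.
$$
Since $\eta\in\Gamma_N\subset\OO(SU_q(n+1))$ is fixed by the $\mL$-action of $\Uq{su}{n}$, one has $\mL_{F_i}\eta=0$, hence $\eta\za F_i=0$, for $1\le i\le n-1$; applying $\eta\za(-)$ to the displayed expression, the first term therefore gives $-q(\eta\za F_i)\za S^{-1}(M_{i+1,n}^*)=0$, while the second gives $\bigl(\eta\za S^{-1}(M_{i+1,n}^*)\bigr)\za F_i$, which by the inductive hypothesis equals $c\,(\eta\za F_n\cdots F_{i+1})\za F_i=c\,(\eta\za F_n\cdots F_i)$, closing the induction. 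It is essential to keep the recursion in the two-term form above: expanding $M_{i,n}$ into a linear combination of all orderings of $F_i,\dots,F_n$ obscures the cancellations.

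To conclude, I would note that $\eta\za(F_nF_{n-1}\cdots F_i)$ is a joint $\mL_{K_j}$-eigenvector, of weight equal to the weight of $\eta$ shifted by $-(\alpha_i+\cdots+\alpha_n)$. As $\eta$ is $\Uq{su}{n}$-invariant and carries $\mathfrak{u}(1)$-weight $N$, its weight is supported on the last node only; hence the power of $q$ produced by $\mL_{K_i\cdots K_n}$ is independent of $i$ — the $i$-dependence of the weight cancels that of the operator, the sum of the entries of the corresponding consecutive block of the Cartan matrix being constant. Collecting this power together with the constant $c$ and the scalar coming from $S^{-1}(K_i\cdots K_n)$ produces exactly $q^{\frac{N}{2}-1}$, uniformly in $i$, which is \eqref{eq:expconn}.

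The main obstacle is precisely this inductive collapse of the nested $q$-commutator $M_{i,n}$ onto the single ordered product $F_nF_{n-1}\cdots F_i$ of lowering operators, which goes through only thanks to the invariance $\mL_{F_j}\eta=0$ for $j<n$. The remaining steps — keeping track of the $^*$- and antipode-conventions and of the $K$-power prefactors so that they recombine into the single constant $q^{\frac{N}{2}-1}$, and verifying that the resulting $n$-tuple represents an element of $\Omega^{0,1}(\CP^n_q)\otimes_{\OO(\CP^n_q)}\Gamma_N$ — are routine given the formulas recalled from \cite{DD09}.
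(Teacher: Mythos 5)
Your proposal follows essentially the same route as the paper's proof: identify $\nabla^{\deb}_{\!N}$ with $\deb$ acting componentwise on $\Gamma_N$, write $\hat{K}X_i=(K_iK_{i+1}\cdots K_n)M_{i,n}^*$ via \eqref{xnm}, convert to the right action through $\mL_x a=a\za S^{-1}(x)$, and collapse $S^{-1}(M_{i,n}^*)$ onto the ordered product $F_nF_{n-1}\cdots F_i$ by downward induction using $\eta\za F_i=0$ for $i<n$; your inductive identity $S^{-1}(M_{i,n}^*)=-qF_iS^{-1}(M_{i+1,n}^*)+S^{-1}(M_{i+1,n}^*)F_i$ is exactly the one in the paper, and your argument that the factor produced by $\mL_{K_i\cdots K_n}$ is independent of $i$ (the relevant block of the Cartan matrix always sums to $2$, and $\eta\za K_i^{-1}\cdots K_n^{-1}=q^{N/2}\eta$) is sound.

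There is, however, a genuine gap exactly where the content of the lemma lies, namely the value of the overall constant. You never compute it: you assert that ``collecting'' $c$, the $K$-power and the scalar from $S^{-1}(K_i\cdots K_n)$ yields $q^{\frac{N}{2}-1}$. With the ingredients you yourself state this does not hold: the base case $M_{n,n}^*=F_n$ together with $S^{-1}(F_n)=-qF_n$ gives $c=-q$, and since the $K$-part contributes exactly $q^{\frac{N}{2}-1}$ (as your own uniformity argument shows), the total would be $-q^{\frac{N}{2}}$, not $q^{\frac{N}{2}-1}$. So either the bottom of the recursion for $M_{i,n}$ in \cite{DD09} is normalized differently from what you assume (so that $\eta\za S^{-1}(M_{n,n}^*)=\eta\za F_n$ with no extra constant — note the recursion as quoted needs an external base case, which must be checked in \cite{DD09}), or your final assertion fails; in either case the coefficient bookkeeping is not ``routine'': it is precisely the point of the lemma, as the paper stresses by remarking that eq.~(13) of \cite{KM11b} differs from \eqref{eq:expconn} by just such a factor. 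The paper pins the prefactor down explicitly \emph{before} the collapse: it commutes $K_i\cdots K_n$ past $M_{i,n}^*$ (a factor $q^{-1}$), applies the anti-automorphism $S^{-1}$, and uses $\eta\za K_i^{-1}\cdots K_n^{-1}=q^{N/2}\eta$ to obtain $(\deb\eta)_i=q^{\frac{N}{2}-1}\eta\za S^{-1}(M_{i,n}^*)$, after which the induction introduces no further constants. To complete your write-up you must either verify the normalization of $M_{n,n}$ in \cite{DD09} or carry out this explicit computation of the constant.
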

\begin{proof}
By \cite[Lemma 6.1]{DD09},
the connection \smash[t]{$\nabla^{\deb}_{\!N}$} coincides with the operator $\deb$ on $\Gamma_N$.
In turn, since $\eta\in\Gamma_N$ is by definition in the kernel of the right action of $\Uq{su}{n}$
while $\eta\za K_n^{-1}=q^{N/2}\eta$, the vector $\deb\eta$ has components
\begin{align*}
(\deb\eta)_i &=
\eta\za S^{-1}(\hat{K}X_i)
=\eta\za S^{-1}(\hat{K}N_{i,n}M_{i,n}^*) \\
&=\eta\za S^{-1}(K_iK_{i+1}\ldots K_nM_{i,n}^*)
=q^{-1}\eta\za S^{-1}(M_{i,n}^*K_iK_{i+1}\ldots K_n) \\
&=q^{-1}\eta\za K_i^{-1}K_{i+1}^{-1}\ldots K_n^{-1}S^{-1}(M_{i,n}^*)
=q^{\frac{N}{2}-1}\eta\za S^{-1}(M_{i,n}^*) \;,
\end{align*}
with $X_i=N_{i,n}M_{i,n}^*$ as in \eqref{xnm}. Being $x\mapsto S^{-1}(x^*)$ an algebra morphism,
\begin{align*}
S^{-1}(M_{i,n}^*) &=[S^{-1}(E_i^*),S^{-1}(M_{i+1,n}^*)]_q
=-q[F_i,S^{-1}(M_{i+1,n}^*)]_q \\[2pt] &=
-qF_iS^{-1}(M_{i+1,n}^*)+S^{-1}(M_{i+1,n}^*)F_i \;.
\end{align*}
Since $\eta\za F_i=0$ for all $i\neq n$,
we get: 
\begin{align*}
(\deb\eta)_i &=q^{\frac{N}{2}-1}\eta\za S^{-1}(M_{i,n}^*)
=q^{\frac{N}{2}-1}\eta\za S^{-1}(M_{i+1,n}^*)F_i \\[2pt]
&=q^{\frac{N}{2}-1}\eta\za S^{-1}(M_{i+2,n}^*)F_{i+1}F_i=\ldots
=q^{\frac{N}{2}-1}\eta\za F_nF_{n-1}\ldots F_i \;.
\end{align*}
This concludes the proof. \cvd
\end{proof}
(For $N=0$, eq.~(13) of \cite{KM11b} has an extra minus sign and misses a factor $q^{-1}$.)

It follows from Lemma 6.1 and Prop.~5.6 of \cite{DD09} that the connection is flat, $(\nabla^{\deb}_{\!N})^2=0$. 
For the corresponding space of holomorphic sections $H^0(\Gamma_N, \nabla^{\deb}_{\!N})$
we have next proposition. This is essentially Cor.~4.2 of \cite{KM11b}, of which we give an 
easier proof (also filling few gaps).
With notations $k_{N,n}={\binom{|N|+n}{n}}$;  
\begin{prop}
The cohomology groups are: $H^0(\Gamma_N, \nabla^{\deb}_{\!N})=0$ if $N>0$, and
$H^0(\Gamma_N, \nabla^{\deb}_{\!N})\simeq\C^{k_{N,n}}$ if $N\leq 0$.
Explicitly, for any $N\leq 0$, $H^0(\Gamma_N, \nabla^{\deb}_{\!N})$ is the $\C$-space of degree
$|N|$ polynomials in the $z_i$'s.
\end{prop}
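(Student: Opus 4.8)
The plan is to reduce the statement, via Lemma~\ref{lemma:5.3}, to a question about the right action of $F_n$, and then answer it by a Peter--Weyl decomposition. Note first that by the previous proposition $\hO(\OO(\CP^n_q))=H^0_{\deb}(\CP^n_q)=\C$, so the $\hO$-module $H^0(\Gamma_N,\nabla^{\deb}_{\!N})$ is just a $\C$-vector space and it suffices to find its dimension and a basis. By Lemma~\ref{lemma:5.3}, $(\nabla^{\deb}_{\!N}\eta)_i=q^{\frac N2-1}\,\eta\za(F_nF_{n-1}\cdots F_i)$ for $\eta\in\Gamma_N$, and since $\za$ is a right action, $\eta\za(F_nF_{n-1}\cdots F_i)=(\eta\za F_n)\za(F_{n-1}\cdots F_i)$; thus all $n$ components of $\nabla^{\deb}_{\!N}\eta$ vanish as soon as $\eta\za F_n=0$, while the component $i=n$ is precisely $q^{\frac N2-1}\eta\za F_n$. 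Hence
$$
H^0(\Gamma_N,\nabla^{\deb}_{\!N})=\bigl\{\eta\in\Gamma_N:\eta\za F_n=0\bigr\}.
$$

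Next, since the left action $\az$ of $\Uq{su}{n+1}$ commutes with $\za$, the kernel of $\za F_n$ is $\az$-invariant, so $H^0(\Gamma_N,\nabla^{\deb}_{\!N})$ is a $\Uq{su}{n+1}$-submodule of $\Gamma_N$. I would then invoke the $\az$-isotypic decomposition of $\Gamma_N$ generalizing \cite[Prop.~5.5]{DD09} (the case $N=0$): $\Gamma_N\simeq\bigoplus_{m\ge0}V_{m\omega_1+(m+|N|)\omega_n}$ for $N\le0$ and $\Gamma_N\simeq\bigoplus_{m\ge0}V_{(m+N)\omega_1+m\omega_n}$ for $N>0$, where $\omega_1,\omega_n$ are the extreme fundamental weights; the bottom summand for $N\le0$ is $V_{|N|\omega_n}=V_{(0,\ldots,0,|N|)}$, which by Prop.~\ref{prop:psigma} is the span of the components of $\Psi_N$, equivalently the $\C$-span of the degree-$|N|$ monomials in the $z_i$'s. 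By Peter--Weyl each such summand occurs in $\Gamma_N$ as $V_\lambda\otimes\C f_\lambda$ with $f_\lambda\in V_\lambda^*$ the unique-up-to-scalar vector that is invariant under the right action of $\Uq{su}{n}$ (the sub-Hopf-algebra on nodes $1,\ldots,n-1$) and has the $\za\hat K$-eigenvalue fixing $\Gamma_N$; uniqueness is the multiplicity-freeness of the branching $SU(n+1)\downarrow U(n)$, which holds verbatim for the quantized enveloping algebras. On such a summand $\za F_n$ acts only through its action on $f_\lambda$, so it vanishes there if and only if $f_\lambda\za F_n=0$.

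Finally the crucial step. Since $f_\lambda$ is annihilated by the right action of $F_1,\ldots,F_{n-1}$ (being $\Uq{su}{n}$-invariant), $f_\lambda\za F_n=0$ would make $f_\lambda$ annihilated by all the simple lowering operators, i.e.\ a lowest-weight vector of $V_\lambda^*$; writing $\lambda^*=a\omega_1+b\omega_n$ with $a,b\ge0$, a short computation of the action of $E_1,\ldots,E_{n-1}$ on the lowest-weight line of $V_\lambda^*$ shows that this line is $\Uq{su}{n}$-invariant exactly when $b=0$, so $f_\lambda$ is a lowest-weight vector precisely then. In the decompositions above, $b=0$ holds for one value of $m$ when $N\le0$ (namely $m=0$) and for no value of $m$ when $N>0$. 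Therefore $H^0(\Gamma_N,\nabla^{\deb}_{\!N})=0$ for $N>0$, while for $N\le0$ it equals the bottom summand $V_{(0,\ldots,0,|N|)}\simeq\C^{k_{N,n}}$, namely the $\C$-span of the monomials $z_0^{j_0}\cdots z_n^{j_n}$ with $j_0+\cdots+j_n=|N|$, as claimed.

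The main obstacle is precisely that last step: showing $\za F_n$ kills no $\az$-isotypic summand of $\Gamma_N$ other than the bottom one (and none at all when $N>0$). It comes down to locating the distinguished $U(n)$-fixed vector $f_\lambda$ in the weight diagram of $V_\lambda^*$ and checking that it is a lowest-weight vector only for the minimal $\lambda$; multiplicity-freeness of the branching is what makes ``the distinguished vector'' well defined, and no genuinely $q$-dependent phenomenon is involved. An alternative, avoiding the weight combinatorics, is to use the explicit left-module presentation $\Gamma_N\simeq\OO(\CP^n_q)^{k_{N,n}}P_N$ together with formula \eqref{eq:expconn}, reducing the claim to an induction on the $(z,z^*)$-bidegree of a putative holomorphic section.
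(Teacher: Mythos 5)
Your reduction to $H^0(\Gamma_N,\nabla^{\deb}_{\!N})=\{\eta\in\Gamma_N:\eta\za F_n=0\}$ via Lemma~\ref{lemma:5.3}, followed by the observation that this kernel is a left $\Uq{su}{n+1}$-submodule and hence a sum of full isotypic components of $\Gamma_N$, is exactly the skeleton of the paper's proof; the divergence is in how you decide which components are killed. The paper does this by a direct computation: each summand $V_{(m+N,0,\ldots,0,m)}$ has the explicit highest weight vector $v_{m,N}=z_0^m(z_n^*)^{m+N}$, and the right-action formulas give $v_{m,N}\za F_n=-q^{-m-\frac12(N+1)}[m+N]\,z_0^m(z_n^*)^{m+N-1}(u^n_1)^*$, which vanishes iff $m+N=0$; irreducibility then settles each summand, and the dimension is read off from the Weyl formula. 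You instead argue structurally on the right Peter--Weyl leg: multiplicity-freeness of the quantum branching $\Uq{su}{n+1}\downarrow\Uq{u}{n}$ gives a unique invariant vector $f_\lambda$, and $f_\lambda\za F_n=0$ iff $f_\lambda$ spans the extreme-weight line of that leg, which your weight computation permits only for the bottom summand when $N\le0$ and never when $N>0$. This is a legitimate and more conceptual route (it explains a priori why at most one summand can survive, and avoids the $q$-combinatorics), but, as you acknowledge, the crux is only sketched and it is convention-sensitive: whether the relevant line is the lowest- or the highest-weight line of the right factor depends on how the right action $\za$ is converted into a module structure, and a slip there flips your criterion from $b=0$ to $a=0$ and hence the conclusion. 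The paper anchors this with the elementary fact $z_i\za F_j=0$, which makes the degree-$|N|$ monomials $\psi^N_J$ manifestly lie in the kernel for $N\le0$; you should include that one-line check, both to pin down the conventions in your branching argument and to make self-contained the identification of the surviving summand with the span of the $\psi^N_J$ (whose irreducibility and dimension $k_{N,n}$ you correctly take from Prop.~\ref{prop:psigma}, in place of the paper's appeal to the dimension formula). With that addition your argument is complete and yields the same result.
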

\begin{proof}
By definition, and using \eqref{eq:expconn}:
$$
H^0(\Gamma_N, \nabla^{\deb}_{\!N})=\big\{\eta\in\Gamma_N:\nabla^{\deb}_{\!N}\eta=0\big\}
=\big\{\eta\in\Gamma_N:\eta\za F_n=0\big\} \;.
$$
Recall that the relations between `coordinates' on $S^{2n+1}_q$ and 
generators of $\OO(SU_q(n+1))$ is $z_i=u^{n+1}_{n+1-i}$, while from \cite[eq.~(4.1)]{DD09}, 
$\pi^{n+1}_i(F_j)=\pi^i_{n+1}(E_j)^*=0$ for all $j=1,\ldots,n$. Hence $z_i\za F_j=
\sum_k\pi^{n+1}_i(F_j)u^j_i=0$ for all indices $i,j$.
Recall also that the elements $\psi^N_{j_0,\ldots,j_n}$ in \eqref{psiN} are a generating family of $\Gamma_N$. For
$N\leq 0$ they are degree $|N|$ monomials in the $z_i$'s, hence
they are in the kernel of $F_n$. Thus,
$$
\psi^N_{j_0,\ldots,j_n}\in H^0(\Gamma_N, \nabla^{\deb}_{\!N}) \qquad \forall\;N\leq 0 \;.
$$
Since for a fixed $N$ the elements
$\psi^N_{j_0,\ldots,j_n}$ are independent over $\C$, and their number is $k_{N,n}=\binom{|N|+n}{n}$,
we also have
$\dim H^0(\Gamma_N, \nabla^{\deb}_{\!N})\geq k_{N,n}$ for all $N\leq 0$.
We show next that $\dim H^0(\Gamma_N, \nabla^{\deb}_{\!N})=k_{N,n}$ if $N\leq 0$ and that 
$\dim H^0(\Gamma_N, \nabla^{\deb}_{\!N})=0$ if $N>0$, thus concluding the proof.

From \cite[ Prop.~5.5]{DD09} we read the decomposition of $\Gamma_N$, 
$$
\Gamma_N=
\Omega^0_N\simeq
\begin{cases}
\bigoplus\nolimits_{m\geq 0}V_{(m+N,0,\ldots,0,m)} \;, &\mathrm{if}\;N> 0\,,\\
\bigoplus\nolimits_{m\geq -N}V_{(m+N,0,\ldots,0,m)} \;, &\mathrm{if}\;N\leq 0\,
\end{cases}
$$
into irreducible representations, with the highest weight vector $v_{m,N}$ of the representation
$V_{(m+N,0,\ldots,0,m)}$ explicitly given by
$$
v_{m,N}:=z_0^m(z_n^*)^{m+N}=
\begin{cases}
(p_{0n})^m\psi^N_{0,\ldots,0,N} &\mathrm{if}\;N>0 \,,\\[2pt]
\psi^N_{-N,0,\ldots,0}(p_{0n})^{m+N} &\mathrm{if}\;N\leq 0 \,.
\end{cases}
$$
Indeed $v_{m,N}\in\Gamma_N$, and using the formul{\ae} for the left action in \eqref{lact} (remembering that 
$z_i=z'_{n+1-i}$), one checks that $E_i\az v_{m,N}=0$, for all $i=1,\ldots,n$, i.e.~$v_{m,N}$ is
a highest weight vector, and $K_i\az v_{m,N}=q^{\frac{1}{2}(m+N)\delta_{i,1}+\frac{1}{2}m\delta_{i,n}}v_{m,N}$,
i.e.~its weight is $(m+N,0,\ldots,0,m)$ as claimed.

Let $T_{m,N}$ be the restriction of $\nabla^{\deb}_{\!N}$ to the subspace $V_{(m+N,0,\ldots,0,m)}$
of $\Gamma_N$. Since left and right canonical actions commute, the image of $T_{m,N}$ is a copy of the same
representation $V_{(m+N,0,\ldots,0,m)}$ inside $\Omega^{0,1}(\CP^n_q)\otimes_{\OO(\CP^n_q)}\Gamma_N$.
For the same reason, $\ker T_{m,N}$ carries a representation of $\Uq{su}{n+1}$.
For fixed $N$, each $T_{m,N}$ has distinct domain and image, hence
$\nabla^{\deb}_{\!N}=\bigoplus_mT_{m,N}$ and
$$
H^0(\Gamma_N, \nabla^{\deb}_{\!N})=\bigoplus\nolimits_m\ker T_{m,N} \;.
$$
Being $\ker T_{m,N}$ a representation of $\Uq{su}{n+1}$, it is either the whole $V_{(m+N,0,\ldots,0,m)}$ or it is $\{0\}$, since the representation $V_{(m+N,0,\ldots,0,m)}$
is irreducible. To discern among the two possibilities, it is enough to check whether or not $v_{m,N}$ is
in the kernel of $T_{m,N}$.
Using $z_i\za F_n=0$ and $z_i\za K_n=q^{\frac{1}{2}}z_i$ one finds
$$
v_{m,N}\za F_n=q^{-\frac{m}{2}}z_0^m\big\{(z_n^*)^{m+N}\za F_n\big\} \;.
$$
Using $z_i\za E_n=u^n_{n+1-i}$, one finds
\begin{align*}
(z_n)^{m+N}\za E_n &=\sum_{k=0}^{m+N-1}q^{\frac{1}{2}(m+N-2k-1)} (z_n)^k u^n_1 (z_n)^{m+N-k-1} \\
&=u^n_1(z_n)^{m+N-1}q^{\frac{1}{2}(m+N-1)}\sum_{k=0}^{m+N-1}q^{-2k} \\
&=u^n_1(z_n)^{m+N-1}q^{-\frac{1}{2}(m+N-1)}[m+N] \;.
\end{align*}
Since $a^*\za F_n=-q^{-1}(a\za E_n)^*$, this finally results into
$$
v_{m,N}\za F_n=-q^{-m-\frac{1}{2}(N+1)}[m+N]
z_0^m(z_n^*)^{m+N-1}(u^n_1)^* \;,
$$
and this is zero if and only if $m+N=0$.

Thus $\ker T_{m,N}\neq\{0\}$ if and only if $m=-N$, admissible only if $N\leq 0$.
If $N\leq 0$, then $H^0(\Gamma_N, \nabla^{\deb}_{\!N})=\ker T_{-N,N}=V_{(0,\ldots,0,-N)}$.
By (3.15) of \cite{DD09} its dimension is given by
\begin{multline*}
\frac{\prod_{1\leq r\leq s<n}(s-r+1)}{\prod_{r=1}^nr!}
\prod_{1\leq r\leq s=n}(s-r+1-N)
\\
=\frac{\prod_{s=1}^{n-1}s!}{\prod_{r=1}^nr!}\frac{(-N+n)!}{(-N)!}
=\binom{-N+n}{n} \;,
\end{multline*}
and this concludes the proof.\cvd
\end{proof}

From last proposition there is a vector space isomorphism
\begin{equation}\label{eq:qhcr}
\bigoplus_{N\leq 0}H^0(\Gamma_N, \nabla^{\deb}_{\!N})\simeq
\frac{\C\inner{z_0,\ldots,z_n}}{\inner{z_iz_j-q^{-1}z_jz_i\,,\,0\leq i<j\leq n}} \;.
\end{equation}
The right hand side is also a ring (in fact, it is a complex unital algebra, although
not a \mbox{$*$-algebra}), called the ``quantum homogeneous coordinate ring''  starting with the paper 
\cite{KLS10} for $n=1$. The isomorphism \eqref{eq:qhcr} becomes
an isomorphism of graded unital algebras if we endow the left hand side
with the product induced by tensor product of bimodules.
From \cite[Prop.~5.2]{KM11b} the product of holomorphic sections is a holomorphic section, a fact  
also inferable from the explicit expression of the isomorphism
$\Gamma_N\otimes_{\Gamma_0}\Gamma_M\to\Gamma_{N+M}$. This we show now, for the sake of completeness.
Recall that $\Gamma_0=\OO(\CP^n_q)$.

\begin{lem}
For any $N,M\in\Z$, it holds that $\Gamma_N\otimes_{\Gamma_0} \Gamma_M\simeq \Gamma_{M+N}$.
\end{lem}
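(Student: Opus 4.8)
The plan is to realise the claimed isomorphism as the map induced by the multiplication of $\OO(S^{2n+1}_q)$. Recall that $\OO(S^{2n+1}_q)=\bigoplus_{K\in\Z}\Gamma_K$ is $\Z$-graded by the $U(1)$-charge $z_i\mapsto\lambda z_i$, with $\Gamma_0=\OO(\CP^n_q)$, with $\Gamma_K\Gamma_L\subseteq\Gamma_{K+L}$ since $U(1)$ acts by algebra automorphisms, and with $\Gamma_K^*=\Gamma_{-K}$ since that action is by $*$-automorphisms. Thus $a\otimes b\mapsto ab$ is a well-defined $\Gamma_0$-bimodule map
\[
\mu:\Gamma_N\otimes_{\Gamma_0}\Gamma_M\longrightarrow\Gamma_{N+M} \;,
\]
the balanced tensor product making sense because $\Gamma_0=\OO(\CP^n_q)$ acts on both factors by multiplication and multiplication is associative. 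Since $\az$ acts on $\OO(S^{2n+1}_q)$ by algebra automorphisms and the tensor product carries the Hopf tensor product action, $\mu$ is moreover $\Uq{su}{n+1}$-equivariant; it therefore suffices to prove that $\mu$ is a bijection.

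To this end I would exhibit an explicit inverse. Let $\psi^M_J$, with $J=(j_0,\ldots,j_n)$ and $j_0+\ldots+j_n=|M|$, be the components of the vector $\Psi_M$ of \S\ref{sec:3.2}; they lie in $\Gamma_M$, so $(\psi^M_J)^*\in\Gamma_{-M}$ and $c\,(\psi^M_J)^*\in\Gamma_{N+M}\Gamma_{-M}\subseteq\Gamma_N$ for every $c\in\Gamma_{N+M}$. Define the linear map
\[
\phi:\Gamma_{N+M}\longrightarrow\Gamma_N\otimes_{\Gamma_0}\Gamma_M \;,\qquad
\phi(c):=\sum\nolimits_J c\,(\psi^M_J)^*\otimes\psi^M_J \;,
\]
which is manifestly well defined. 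The key input is the normalisation $\Psi_M^\dag\Psi_M=\sum_J(\psi^M_J)^*\psi^M_J=1$ recorded in \S\ref{sec:3.2} — note that it is $\Psi_M^\dag\Psi_M$, and not the projection $P_M=\Psi_M\Psi_M^\dag$, that equals the unit. From it, $\mu(\phi(c))=\sum_J c\,(\psi^M_J)^*\psi^M_J=c$, so $\mu\circ\phi=\id_{\Gamma_{N+M}}$ and in particular $\mu$ is onto. Conversely, for $a\in\Gamma_N$ and $b\in\Gamma_M$ the element $b\,(\psi^M_J)^*$ lies in $\Gamma_M\Gamma_{-M}\subseteq\Gamma_0$, so it may be pushed across the $\Gamma_0$-balanced tensor product:
\[
\phi(\mu(a\otimes b))
=\sum\nolimits_J a\,\big(b\,(\psi^M_J)^*\big)\otimes\psi^M_J
=\sum\nolimits_J a\otimes b\,(\psi^M_J)^*\psi^M_J
=a\otimes b \;,
\]
using associativity and again $\sum_J(\psi^M_J)^*\psi^M_J=1$. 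Hence $\phi\circ\mu=\id$ as well, and $\mu$ is the desired isomorphism of $\Uq{su}{n+1}$-equivariant $\Gamma_0$-bimodules.

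The argument is essentially formal, and the only point needing care is the identity $\phi\circ\mu=\id$: it works precisely because $b\,(\psi^M_J)^*$ falls into $\Gamma_0$, i.e.\ because one has the ``partition of unity'' $1\in\Gamma_{-M}\Gamma_M$ (equivalently, the $\Z$-grading of $\OO(S^{2n+1}_q)$ is \emph{strong}), which is what upgrades $\phi$ from a mere section of $\mu$ to a genuine inverse. Once this is in place the statement is an instance of the standard fact that a strongly $\Z$-graded algebra $R=\bigoplus_K R_K$ satisfies $R_N\otimes_{R_0}R_M\simeq R_{N+M}$; I would nonetheless present the argument in the self-contained form above, so as not to invoke that machinery and to exhibit the isomorphism explicitly.
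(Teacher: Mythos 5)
Your proof is correct, and it takes a slightly different (more uniform) route than the paper. The paper also starts from the multiplication map, but it first reduces the statement to the two special cases $\Gamma_N\otimes_{\Gamma_0}\Gamma_1\simeq\Gamma_{N+1}$ and $\Gamma_1\otimes_{\Gamma_0}\Gamma_{-1}\simeq\Gamma_0$, exhibits inverses $\eta\mapsto\sum_k\eta z_k\otimes z_k^*$ and $a\mapsto\sum_kq^{2k}az_k^*\otimes z_k$ built directly from the sphere relations $\sum_kz_kz_k^*=1$ and $\sum_kq^{2k}z_k^*z_k=1$, and then obtains general $M$ by induction. Your map $\phi(c)=\sum_Jc\,(\psi^M_J)^*\otimes\psi^M_J$ is the natural common generalization — for $M=\pm1$ the components of $\Psi_{\pm1}$ are exactly $z_i^*$ and $q^iz_i$, so your $\phi$ specializes to the paper's two maps — and by invoking the normalization $\Psi_M^\dag\Psi_M=1$ recorded in \S\ref{sec:3.2} (a nontrivial identity proved in the references, which the paper is entitled to use and so are you) you dispose of all $M$ at once, with no induction; both arguments ultimately rest on the same mechanism of pushing degree-zero elements such as $b(\psi^M_J)^*\in\Gamma_M\Gamma_{-M}\subseteq\Gamma_0$ across the balanced tensor product, i.e.\ on the strong $\Z$-grading of $\OO(S^{2n+1}_q)$, which you rightly identify as the crux. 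Two cosmetic remarks: the action $\az$ of $\Uq{su}{n+1}$ is a module-algebra action, not an action by algebra automorphisms (only group-like elements act as automorphisms), though the equivariance of $\mu$ you deduce from it is not needed for the statement; and, unlike the paper, you do not verify directly that your $\phi$ is a bimodule map — which is fine, since bijectivity of the bimodule map $\mu$ already makes it a bimodule isomorphism (the paper checks centrality of $\sum_kz_k\otimes z_k^*$ and $\sum_kq^{2k}z_k^*\otimes z_k$ for this purpose, a step your arrangement renders unnecessary).
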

\begin{proof}
It is enough to prove that a) $\Gamma_N\otimes_{\Gamma_0} \Gamma_1\simeq \Gamma_{N+1}$ for all $N\in\mathbb{Z}$,
b) $\Gamma_1\otimes_{\Gamma_0} \Gamma_{-1}\simeq \Gamma_0$. Indeed, from a) and b) it follows that
$\Gamma_{N+1}\otimes_{\Gamma_0} \Gamma_{-1}\simeq \Gamma_N$, and with this one proves that $\Gamma_N\otimes_{\Gamma_0} \Gamma_M\simeq \Gamma_{M+N}$
by induction on $M$. A bimodule map $m:\Gamma_N\otimes_{\Gamma_0} \Gamma_M\to \Gamma_{M+N}$ is given by the multiplication.
We now define bimodule maps $\Gamma_{M+N}\to \Gamma_N\otimes_{\Gamma_0} \Gamma_M$ in the cases a) and b), and prove they are inverse maps to $m$. Define:
\begin{align*}
\phi &: \Gamma_{N+1}\to \Gamma_N\otimes_{\Gamma_0} \Gamma_1 \;,
& \phi(\eta) &=\sum\nolimits_{k=0}^n\eta\, z_k\otimes z_k^* \;,
\\
\chi &: \Gamma_0\to \Gamma_1\otimes_{\Gamma_0} \Gamma_{-1} \;,
& \chi(a) &=\sum\nolimits_{k=0}^nq^{2k} a\, z_k^*\otimes z_k \;.
\end{align*}
A straightforward computation shows that $\sum\nolimits_kz_k\otimes z_k^*$ and $\sum\nolimits_kq^{2k}z_k^*\otimes z_k$
commute with the generators of $\Gamma_0$, so that $\phi$ and $\chi$ are bimodule maps. Moreover:
\begin{align*}
(m\circ\phi)(\eta) &=m\circ\left(\sum\nolimits_k\eta z_k\otimes z_k^*\right)=\eta \sum\nolimits_kz_kz_k^*=\eta 
\;,
\\
(m\circ\chi)(a) &=m\circ\left(\sum\nolimits_kq^{2k}az_k^*\otimes z_k\right)=a\sum\nolimits_kq^{2k}z_k^*z_k=a
\;,
\end{align*}
so $m$ is a left inverse of both $\phi$ and $\chi$ (remember that $\sum\nolimits_kq^{2k}z_k^*z_k=1$).

Being the elements $z_i^*$ a generating family of $\Gamma_1$, and the
elements $z_i$ a generating family of $\Gamma_{-1}$, any $\eta\in \Gamma_N\otimes_{\Gamma_0} \Gamma_1$
can be written as $\eta=\sum_i\eta_i\otimes z_i^*$ with $\eta_i\in \Gamma_N$, and any
$\xi\in \Gamma_1\otimes_{\Gamma_0} \Gamma_{-1}$ can be written as $\xi=\sum_i\xi_i\otimes z_i$
with $\xi_i\in \Gamma_1$. A simple computation yields
\begin{align*}
(\phi\circ m)(\eta) &=\phi\left(  \sum\nolimits_i\eta_iz_i^* \right)=\sum\nolimits_{i,k}\eta_iz_i^*z_k\otimes z_k^*
=\sum\nolimits_{i,k}\eta_i\otimes z_i^*z_kz_k^*\\[5pt] &=\sum\nolimits_i\eta_i\otimes z_i^*=\eta
\;,
\\[5pt]
(\chi\circ m)(\xi) &=\chi\left(  \sum\nolimits_i\xi_iz_i \right)=\sum\nolimits_{i,k}q^{2k}\xi_iz_iz_k^*\otimes z_k
=\sum\nolimits_{i,k}\xi_i\otimes q^{2k}z_iz_k^*z_k\\[5pt] &=\sum\nolimits_i\xi_i\otimes z_i=\xi
\;,
\end{align*}
having used the fact that $z_i^*z_k$ and $z_iz_k^*$ belong to $\Gamma_0$ to move them to the right hand
side of the tensor product.
Thus $m$ is also a right inverse of both $\phi$ and $\chi$. It is then an isomorphism of bimodules.\cvd
\end{proof}


\subsection{Existence of a twisted positive Hochschild cocycle}\label{sec:exist}
For a closed oriented Riemannian manifold $M$ of real dimension $2n$,
one defines a Hochschild $2n$-cocycle $\tau$,  
the ``fundamental class'' of $M$ \cite[Sect.~VI.2]{Con94}, as 
\begin{equation}\label{eq:posone}
\tau(a_0,\ldots,a_{2n}):=\int_M a_0\dd a_1\wedge\dd a_2\wedge\ldots\wedge\dd a_{2n} \;.
\end{equation}
If $M$ is a complex manifold,
a representative of the class $[\tau]$ is:
\begin{equation}\label{eq:postwo}
\int_M a_0\de a_1\wedge\ldots\wedge\de a_n\wedge\deb a_{n+1}\wedge\ldots\wedge\deb a_{2n} \;,
\end{equation}
modulo a proportionality constant that here we neglect. The latter is
a \emph{positive} cocycle in the sense of \cite{CC88}.
It is worth stressing that \eqref{eq:posone} is also cyclic while \eqref{eq:postwo} is not.
Positive representatives of $[\tau]$ form a convex space and,
for $n=1$, there is a bijection between its extreme points and
complex structures on $M$ (cf.~\S{}VI.2 of \cite{Con94}).
Another way to construct positive representatives
of $[\tau]$ uses the Clifford representation of differential
forms, cf.~\cite[Sect.~IV, Example 3]{CC88}, leading to
a cocycle that depends only on the conformal class of the
Riemannian metric (in complex dimension $1$,
conformal and complex structures are equivalent).

For the case of $\CP^n_q$, having the Dolbeault complex, the next step would be to construct a full differential \mbox{$*$-calculus}, reducing to the de Rham complex for $q=1$.
For $n=2$ this was done explicitly in \cite{DL09b}. In \cite[Sect.~6]{KM11b},
a positive representative of their fundamental twisted Hochschild
cocycle was given under the (hidden) assumption that there is a product of forms with the
property that $\Omega^{n,n}(\CP^n_q)\simeq\OO(\CP^n_q)$
is a free bimodule of rank $1$, i.e.~there exists a basis element of $\Omega^{n,n}(\CP^n_q)$
that one would take as a ``volume form''. At an algebraic level (i.e.~without using
operators on Hilbert spaces) such a calculus was given in \cite{HK06b},
where the existence of a volume form was also established. 
One should stress that it is not clear 
whether or not the \mbox{$*$-calculus} in \cite{HK06b} is related to 
the Dolbeault complex discussed here, although it is reasonable to guess that, 
modulo isomorphisms, the former is
an extension of the latter.

We now show that in fact the Dolbeault complex is enough
to define a positive twisted Hochschild $2n$-cocycle (although one would need
a full \mbox{$*$-calculus} in order to have an analogue
of \eqref{eq:posone}). We will give a positive $2k$-cocycle
for any $0\leq k\leq n$. Let us recall some basic facts and definitions.

Let $\A$ be a \mbox{$*$-algebra} and $\eta$ an automorphism of $\A$ (not a $*$-automorphism:
we do not assume $\eta(a^*)=\eta(a)^*$). 
We denote by ${_\eta}\A$ the $\A$-bimodule that is $\A$ itself as a vector
space and as a right module, but has a left module structure `twisted' with $\eta$:
$(a,b)\mapsto\eta(a)b$ for $a,b\in\A$.
The Hochschild cohomology $H\!H_\eta^\bullet(\A)=H^\bullet(\A,{_\eta}\A)$
is the cohomology of the
complex $\big(\mathrm{Hom}_{\C}(\A^\bullet,\C),b_\bullet\big)$, where
the coboundary operators $b$ is \cite{Lod97}:
\begin{multline*}
\hspace{5mm}
b\varphi(a_0,a_1,\ldots,a_k)=
\sum\nolimits_{i=0}^{k-1}(-1)^i
\varphi(a_0,\ldots,a_ia_{i+1},\ldots,a_k) \\[2pt]
+(-1)^k\varphi\bigl(\eta(a_k)a_0,a_1,\ldots,a_{k-1}\bigr) \;.
\hspace{5mm}
\end{multline*}
The cocycle $\varphi$ is called \emph{positive}
if the sesquilinear form on $\A^{n+1}$ given by
$$
\inner{a_0\otimes a_1\otimes\ldots\otimes a_n,
b_0\otimes b_1\otimes\ldots\otimes b_n}_\varphi:=
\varphi\bigl(\eta(b_0^*)a_0,a_1,\ldots,a_n,b_n^*,\ldots,b_1^*\bigr)
$$
is positive semidefinite.

\begin{rem}
\textup{
In the original definition one
assumes that $\inner{\,,\,}_\varphi$ is positive definite. But
already for \eqref{eq:postwo}, for $n\geq 2$, this is not true
since for example elements
$a\otimes a\otimes \ldots\otimes a$
are not zero in $\A^{n+1}$, but $\de a\wedge\de a=0$. Even for $n=1$,
$\inner{\,,\,}_\varphi$ is only positive definite on $\A\otimes (\A/\C)$.
Similarly, looking at the proof of \cite[Thm.~6.1]{KM11b},
it is clear that the cocycle $\varphi$ there is only positive semidefinite,
since the Haar state is faithful, but the map
$\A^{n+1}\to\Omega^{0,n}$,
$a_0\otimes a_1\otimes\ldots\otimes a_n\mapsto a_0\de a_1\ldots\de a_n$,
is not injective.
}\end{rem}

For $\CP^n_q$, one defines non-trivial positive 
twisted Hochschild cocycles with $\eta$ the inverse of the modular automorphism, that is:
$$
\eta(a)=K_{2\rho}^{-1}\az a \;,\qquad\forall\;a\in\OO(\CP^n_q),
$$
with $K_{2\rho}$ the element, implementing the square of the antipode, given in \eqref{eia}.
From \cite[eq.~(3.4)]{DD09} and right invariance of elements of $\OO(\CP^n_q)$,
it follows that the Haar state $h$ is the representative of an element in
$H\!H^0_\eta(\OO(\CP^n_q))$, that is
\begin{equation}\label{eq:modular}
h(ab)=h\big(\eta(b)a\big) \;,\qquad\forall\;a,b\in\OO(\CP^n_q) \;.
\end{equation}
Define $\tau_0:=h$ and, for any $1\leq k\leq n$,
define a $2k$-cochain $\tau_k$ on $\CP^n_q$ by
\begin{align}\label{eq:qcc}
\tau_k(a_0,a_1,\ldots,a_{2k}) &=
\inner{
(\deb a_k^*\wprod\ldots\wprod\deb a_1^*)a_0^*,
\deb a_{k+1}\wprod\ldots\wprod\deb a_{2k}
}_{\Omega^{0,k}(\CP^n_q)}
\notag \\
&\hspace{-1cm}=h\Big(
a_0\big(\deb a_k^*\wprod\ldots\wprod\deb a_1^*\big)^*
\cdot\big(\deb a_{k+1}\wprod\ldots\wprod\deb a_{2k}\big)
\Big) \;.
\end{align}
In the first line we have the canonical inner product of
$\Omega^{0,k}(\CP^n_q)$, as given in \S\ref{sec:3.0}.
Elments $\omega_1,\omega_2\in\Omega^{0,k}(\CP^n_q)$ are
column vectors with $\binom{n}{k}$ components and
with entries in $\OO(SU_q(n+1))$; $\omega_1^*$
is the transposed conjugated row vector, and
the product $\omega_1^*\cdot\omega_2$ in \eqref{eq:qcc}
is the row-by-column product composed with the multiplication
in $\OO(SU_q(n+1))$.

\begin{rem}
\textup{
For $q=1$, $\tau_n$ in \eqref{eq:qcc} coincides with \eqref{eq:postwo} modulo a sign:
the property $(\deb a^*)^*=\de a$ yields
$$\tau_n(a_0,\ldots,a_{2n})=(-1)^{\frac{n(n-1)}{2}}\int_{\CP^n}
a_0\eta_1\eta_2,
$$
where
$\eta_1=\de a_1\wedge\ldots\wedge\de a_n\in\Omega^{n,0}({\CP^n})\simeq\Gamma_{n+1}$
and
$\eta_2=\deb a_{n+1}\wedge \ldots\wedge \deb a_{2n}\in\Omega^{0,n}({\CP^n})\simeq\Gamma_{-n-1}$
are scalar function on $SU(n+1)$, and the product of $(n,0)$ forms with $(0,n)$ forms
is simply the product of the corrisponding functions. The integral is normalized
so that $\int_{\CP^n}1=1$.
}
\end{rem}

\begin{prop}
The map $\tau_k$ in \eqref{eq:qcc} is a positive representative
of an element $[\tau_k]\in H\!H^{2k}_\eta(\OO(\CP^n_q))$.
\end{prop}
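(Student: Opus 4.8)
The plan is to verify separately that (i) $\tau_k$ is a cocycle for the twisted coboundary $b$, and (ii) the sesquilinear form $\inner{\,\cdot\,,\,\cdot\,}_{\tau_k}$ on $\OO(\CP^n_q)^{\otimes(k+1)}$ associated with $\tau_k$ is positive semidefinite.

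I would begin with positivity, the quicker half. Writing $\omega_a:=\deb a_k^*\wprod\cdots\wprod\deb a_1^*$ and $\omega_b:=\deb b_k^*\wprod\cdots\wprod\deb b_1^*$ in $\Omega^{0,k}(\CP^n_q)$, the second line of \eqref{eq:qcc} gives $\inner{a_0\otimes\cdots\otimes a_k,\,b_0\otimes\cdots\otimes b_k}_{\tau_k}=\tau_k(\eta(b_0^*)a_0,a_1,\ldots,a_k,b_k^*,\ldots,b_1^*)=h\big(\eta(b_0^*)\cdot a_0\,\omega_a^*\,\omega_b\big)$. The key move is the modular identity \eqref{eq:modular} (or its general form for $\OO(SU_q(n+1))$, since only the $\OO(\CP^n_q)$-factor $b_0^*$ is displaced), namely $h(\eta(b_0^*)X)=h(Xb_0^*)$, which turns the above into $h\big(a_0\,\omega_a^*\,(\omega_b b_0^*)\big)$. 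Using antimultiplicativity of $*$ and the Hermitian structure of $\Omega^{0,k}(\CP^n_q)$ recalled in \S\ref{sec:3.0}, this is exactly $\inner{\omega_a a_0^*,\ \omega_b b_0^*}_{\Omega^{0,k}(\CP^n_q)}$. Hence, setting $\widetilde\Phi(a_0\otimes\cdots\otimes a_k):=(\deb a_k^*\wprod\cdots\wprod\deb a_1^*)\,a_0^*\in\Omega^{0,k}(\CP^n_q)$ — a conjugate-multilinear expression, so that $\widetilde\Phi$ extends (conjugate-linearly) to $\OO(\CP^n_q)^{\otimes(k+1)}$ — we get $\inner{\xi,\xi}_{\tau_k}=\|\widetilde\Phi(\xi)\|_{\Omega^{0,k}(\CP^n_q)}^2\ge 0$ for all $\xi$, since the inner product of $\Omega^{0,k}(\CP^n_q)$ is built from the (positive) Haar state. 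This also makes transparent why only semidefiniteness, not definiteness, can hold: $\widetilde\Phi$ has a nontrivial kernel, as already remarked above.

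For the cocycle property I would show $b\tau_k(a_0,\ldots,a_{2k+1})=0$ directly. Expand $b\tau_k$ into the $2k+1$ ``merge'' terms $(-1)^i\tau_k(\ldots,a_ia_{i+1},\ldots)$, $i=0,\ldots,2k$, plus the wraparound term $-\tau_k(\eta(a_{2k+1})a_0,a_1,\ldots,a_{2k})$. Apply the graded Leibniz rule for $\deb$ on $(\Omega^{0,\bullet}(\CP^n_q),\wprod)$ — $\deb$ being a graded derivation — to the factor $\deb((a_ia_{i+1})^{(*)})$ in each merge term with $1\le i\le 2k$, and use the $\OO(\CP^n_q)$-bilinearity of $\wprod$ (the relation $\omega\wprod a\omega'=\omega a\wprod\omega'$ from \S\ref{sec:Dol}) to slide the functions produced by Leibniz to the relevant side. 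One then checks that all resulting sub-terms cancel in consecutive pairs: the interior telescoping inside the left block $\deb a_k^*\wprod\cdots\wprod\deb a_1^*$ and inside the right block $\deb a_{k+1}\wprod\cdots\wprod\deb a_{2k}$; the scalar/left-block seam ($i=0$ against one half of $i=1$); and the left/right-block seam (one half of $i=k$ against one half of $i=k+1$). The only sub-term left uncancelled by another merge term is one half of the $i=2k$ merge, namely $h\big(a_0\,\omega_a^*\,(\deb a_{k+1}\wprod\cdots\wprod\deb a_{2k})\,a_{2k+1}\big)$; it is cancelled precisely by the wraparound term once \eqref{eq:modular} is used to move $\eta(a_{2k+1})$ past everything to the far right. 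This is the structural reason the twist $\eta$ has to be the inverse of the modular automorphism of $h$.

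The main obstacle is the bookkeeping in this last step: tracking signs and, more importantly, the non-commutative reorderings forced by inserting the Leibniz-generated functions into the $\wprod$-products, particularly at the three seams and in checking that the modular twist generated at the wraparound matches the leftover $i=2k$ sub-term on the nose. Everything else — the interior telescoping and the entire positivity argument — is routine once the identification $\inner{\xi,\xi}_{\tau_k}=\|\widetilde\Phi(\xi)\|^2$ and the modular identity \eqref{eq:modular} are in place. Conceptually, $\tau_k$ is, up to conjugation and reversal of its arguments, the cup product of the Hochschild $0$-cocycle $h$ with the $2k$-fold iterate of the Hochschild $1$-cocycle $a\mapsto\deb a$ valued in $\Omega^{0,1}(\CP^n_q)$, which explains a priori why it is a cocycle; but the $*$'s and the twist make the direct telescoping check the cleaner thing to write out.
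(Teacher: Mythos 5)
Your proposal is correct and follows essentially the same route as the paper's proof: positivity by rewriting the sesquilinear form, via the modular property \eqref{eq:modular}, as the canonical inner product of $\Omega^{0,k}(\CP^n_q)$ applied to $(\deb a_k^*\wprod\cdots\wprod\deb a_1^*)a_0^*$, and the cocycle identity by the graded Leibniz rule, telescoping inside the two blocks (the paper's \eqref{eq:secline} and \eqref{eq:tirline}), cancellation at the seams, and the modular property killing the leftover boundary term against the twisted wraparound term. The only difference is presentational: the paper writes the telescoping identities out explicitly, while you indicate them as a check to be performed.
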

\begin{proof}
From \eqref{eq:modular} it follows that
\begin{multline*}
\inner{a_0\otimes a_1\otimes\ldots\otimes a_k,
b_0\otimes b_1\otimes\ldots\otimes b_k}_{\tau_k} \\[4pt]
=\inner{
(\deb a_k^*\wprod\ldots\wprod\deb a_1^*)a_0^*,
(\deb b_k^*\wprod\ldots\wprod\deb b_1^*)b_0^*
}_{\Omega^{0,k}(\CP^n_q)} \;.
\end{multline*}
This is positive semidefinite, since
$\inner{\omega,\omega}_{\Omega^{0,k}(\CP^n_q)}\geq 0$
for all $\omega\in\Omega^{0,k}(\CP^n_q)$.
Let us write
$$
\tau_k(a_0,a_1,\ldots,a_{2k}) =h\Big(
a_0\phi(a_1,\ldots,a_k)\cdot\phi'(a_{k+1},\ldots,a_{2k})
\Big) \;,
$$
with 
$$
\phi(a_1,\ldots,a_k)  :=(\deb a_k^*\wprod\ldots\wprod\deb a_1^*)^* \;, \quad
\phi'(b_{1},\ldots, b_{k})  :=\deb b_{1}\wprod\ldots\wprod\deb b_{k} \;,
$$
and recall that the product between $\phi$ and $\phi'$ is the row-by-column product
between two vectors with entries in $\OO(SU_q(n+1))$.
Using the Leibniz rule and the rule for the involution, that is
$\deb(a_ia_{i+1})^*=a_{i+1}^*(\deb a_i^*)+(\deb a_{i+1}^*)a_i^*$, we compute
\begin{multline}
\sum_{i=1}^k(-1)^i\phi( a_1,\ldots,a_ia_{i+1},\ldots,a_{k+1}) \\ =
(-1)^{1}a_1\phi(a_2,\ldots,a_{k+1})+(-1)^k\phi(a_1,\ldots,a_k)a_{k+1} \;,
\label{eq:secline}
\end{multline}
and
\begin{multline}
\sum_{i=k+1}^{2k}(-1)^i\phi'(a_{k+1},\ldots,a_ia_{i+1},\ldots,a_{2k+1}) \\  =
(-1)^{k+1}a_{k+1}\phi'(a_{k+2},\ldots,a_{2k+1})
+(-1)^{2k}\phi'(a_{k+1},\ldots,a_{2k})a_{2k+1} \;.
\hspace{-3mm}
\label{eq:tirline}
\end{multline}
Therefore,
\begin{align*}
b\tau_k(a_0, \ldots,a_{2k+1}) &= h\Big(a_0a_1\, \phi(\ldots)\phi'(\ldots) \\  
& \phantom{=} -a_0a_1\, \phi(\ldots)\phi'(\ldots)
+(-1)^ka_0\phi(\ldots)\, a_{k+1}\, \phi'(\ldots)
\\ & \phantom{=} +(-1)^{k+1}a_0\phi(\ldots)\, a_{k+1}\, \phi'(\ldots)
+ a_0\phi(\ldots)\phi'(\ldots)\, a_{2k+1}
\\  & \phantom{=} - \eta(a_{2k+1})a_0\, \phi(\ldots)\phi'(\ldots)
\Big) \;.
\end{align*}
having used \eqref{eq:secline} for the second line and \eqref{eq:tirline} for the third line. We can simplify
the first four terms and get:
$$
b\tau_k(a_0,\ldots,a_{2k+1}) =
h\Big(
a_0\phi(\ldots)\phi'(\ldots)a_{2k+1}
-\eta(a_{2k+1})a_0\phi(\ldots)\phi'(\ldots)\Big) \;,
$$
that is zero by \eqref{eq:modular}.\cvd
\end{proof}


\subsection{Quantum characteristic classes}\label{sec:5.4}
A natural map from equivariant K-theory to equivariant cyclic homology is given in \cite{NT03} (among others), 
and adapted to the present situation in \cite{DL09b}. As explained in Sect.~7.1 of the latter, 
equivariant cyclic homology is paired with twisted Hochschild homology, inducing a pairing, 
$$
H\!H^\bullet_\eta(\OO(\CP^n_q))\times K_0^{\Uq{su}{n+1}}(\OO(\CP^n_q))\to\C \;,
$$
of which here we just give the formula.
For a representation $\sigma:\Uq{su}{n+1}\to M_k(\C)$,
an idempotent $p\in M_k(\OO(\CP^n_q))$ satisfying
\eqref{eq:cov}, and a twisted cycle $\tau\in H\!H^m_\eta(\OO(\CP^n_q))$, one has,
$$
\inner{[\tau],[(p,\sigma)]}=
\tau\Big(
\tr_{\C^k}\bigl(\,
\stackrel{m+1\;\mathrm{times}}{\overbrace{
p\dotimes p\dotimes\ldots\dotimes p }}
\sigma(K_{2\rho}^{-1})^t\bigr)
\Big) \;, 
$$
with $\dotimes$ composition of tensor product over $\C$ with matrix multiplication.

\begin{prop}
For any $N\in\Z$ and any $0\leq k\leq n$, with
$\tau_k$ the cocycle in \eqref{eq:qcc} and $(P'_N,\sigma^N)$ the element in \S\ref{sec:3.2}
one has:
$$
\inner{[\tau_k],[(P'_N,\sigma^N)]}=\begin{cases}
1 & \mathrm{if}\;k=0\;,\\
q^{-n-3}[n][N] & \mathrm{if}\;k=1\;,\\
0 & \mathrm{if}\;k\geq 2\;.
\end{cases}
$$
\end{prop}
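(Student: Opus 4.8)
The plan is to reduce everything to the self-adjoint projections $P_N$, and then to exploit the fact that the generating family $\{\psi^N_J\}$ of $\Gamma_N$ is \emph{holomorphic} (for $N\le 0$), so that $\deb$ applied to an entry of $P_N$ factorizes; this kills almost all of the combinatorics. First, since $P'_N=R_NP_NR_N^{-1}$ with $R_N=\sigma^N(K_{2\rho}^{1/2})^t$ a diagonal matrix of powers of $q$, the scalar entries of $R_N$ telescope along the chain $P'_N\dotimes\cdots\dotimes P'_N$ and cancel against one another and against the closing trace; as $\sigma^N(K_{2\rho}^{-1})^t$ is left unchanged, $\inner{[\tau_k],[(P'_N,\sigma^N)]}=\inner{[\tau_k],[(P_N,\sigma^N)]}$, and I work with $P_N$ from now on, giving the details for $N\le 0$ (the case $N>0$ being treated by the parallel argument with $(\psi^N_J)^*$ in the role of the holomorphic factor). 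For $k=0$ the pairing is simply $h\big(\tr(P_N\,\sigma^N(K_{2\rho}^{-1})^t)\big)$; using $P_N=\Psi_N\Psi_N^\dag$, the eigenvalue $K_{2\rho}\az\Psi_N=q^{\sum_i i(n+1-i)(j_{i-1}-j_i)}\Psi_N$ from the proof of Prop.~\ref{prop:psigma}, and $\Psi_N^\dag\Psi_N=1$, the twisted trace collapses to $\tr_q P_N=1$ by \eqref{q-tr}, so the pairing equals $h(1)=1$.

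\textbf{The factorization lemma.} For $k\ge 1$ I would expand $\inner{[\tau_k],[(P_N,\sigma^N)]}$ as a sum over multi-indices $J_0,\dots,J_{2k}$ of $\tau_k$ evaluated on $a_0=\mu_{J_0}p_{J_0J_1}$ and $a_i=p_{J_iJ_{i+1}}$ for $1\le i\le 2k$ (cyclically $J_{2k+1}=J_0$), with $\mu_{J_0}$ the relevant power of $q$ and $p_{JJ'}=\psi^N_J(\psi^N_{J'})^*$. The key observation is that, for $N\le 0$, $\psi^N_J\in H^0(\Gamma_N,\nabla^{\deb}_{\!N})$ by the proposition of \S\ref{se:cpnhs}, i.e.\ $\nabla^{\deb}_{\!N}\psi^N_J=0$; since $\deb$ on $\OO(\CP^n_q)$ is, under the identification $\Gamma_N\otimes_{\Gamma_0}\Gamma_{-N}\simeq\OO(\CP^n_q)$, the tensor product of the Dolbeault bimodule connections on $\Gamma_N$ and $\Gamma_{-N}$, the Leibniz rule gives $\deb p_{JJ'}=\psi^N_J\cdot\xi_{J'}$, where $\xi_{J'}\in\Omega^{0,1}(\CP^n_q)$ depends on $J'$ only, with $(\xi_{J'})_i$ proportional to $(\psi^N_{J'})^*\za F_nF_{n-1}\cdots F_i$ (cf.\ Lemma~\ref{lemma:5.3}). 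Differentiating the completeness relation $\sum_J(\psi^N_J)^*\psi^N_J=1$ and using $\nabla^{\deb}_{\!N}\psi^N_J=0$ once more yields the contraction identity $\sum_J\xi_J\,\psi^N_J=0$.

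\textbf{Vanishing for $k\ge 2$ and the residue for $k=1$.} Substituting $\deb p_{J_iJ_{i+1}}=\psi^N_{J_i}\xi_{J_{i+1}}$ (so that $\deb(a_i^*)=\deb(p_{J_{i+1}J_i})=\psi^N_{J_{i+1}}\xi_{J_i}$) into the two $\wprod$-products appearing in $\tau_k$ and using the $\OO(\CP^n_q)$-bilinearity of $\wprod$ to bring each holomorphic factor next to the $\xi$ carrying the matching index, every index occurring only inside a single $\wprod$-product produces a factor $\sum_{J_i}\xi_{J_i}\psi^N_{J_i}=0$. For $k\ge 2$ at least one such index lies inside $\deb a_{k+1}\wprod\cdots\wprod\deb a_{2k}$, so $\tau_k\equiv 0$ on $P_N$ and the pairing is $0$. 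For $k=1$ the only internal index $J_2$ is shared between $\deb(a_1^*)$ (as $(\psi^N_{J_2})^*$) and $\deb a_2$ (as $\psi^N_{J_2}$), so summing it uses $\sum_{J_2}(\psi^N_{J_2})^*\psi^N_{J_2}=1$ and collapses the pairing to
\[
\inner{[\tau_1],[(P_N,\sigma^N)]}
=h\Big(\sum_{J_0,J_1}\mu_{J_0}\,p_{J_0J_1}\sum_{i=1}^n(\xi_{J_1})_i^*(\xi_{J_0})_i\Big).
\]
Evaluating $\sum_i(\xi_{J_1})_i^*(\xi_{J_0})_i$ by rewriting it through the right action of a combination $\sum_i q^{c_i}X_iX_i^*$ of the elements $X_i=N_{i,n}M_{i,n}^*$ from \eqref{xnm} (as in \S\ref{sec:hodge}), then using the Casimir eigenvalue $[m][m+n]$ on the summands $V_{(m+N,0,\dots,0,m)}$ of the decomposition of $\Gamma_N$ from \cite[Prop.~5.5,~3.3]{DD09}, the modular property \eqref{eq:modular} of $h$, and the remaining $q$-geometric summation over $m$ (and over $J_0,J_1$), should produce exactly $q^{-n-3}[n][N]$.

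\textbf{Main obstacle.} The conceptual part — the $R_N$-telescoping, the factorization $\deb p_{JJ'}=\psi^N_J\xi_{J'}$, and the contraction identity $\sum_J\xi_J\psi^N_J=0$ — is clean; the work is elsewhere. First, one must justify the factorization rigorously inside the twisted exterior algebra $(\Omega^{0,\bullet}(\CP^n_q),\wprod)$, keeping track of the bimodule isomorphisms intertwining the left and right Dolbeault connections on $\Gamma_{\pm N}$ and of the $q$-powers they introduce each time a holomorphic factor is commuted through $\wprod$ (this is also where the precise constant $q^{-n-3}$ is born). Second, the explicit evaluation of $\sum_i(\xi_{J_1})_i^*(\xi_{J_0})_i$ together with the double sum over $J_0,J_1$ weighted by the modular factor $\mu_{J_0}$ must be organized carefully through the $X_iX_i^*$-Casimir identities of \cite{DD09}; this is the main computational hurdle, and the likely source of any sign or normalization subtleties.
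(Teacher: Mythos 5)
Your reduction from $P'_N$ to $P_N$ and your treatment of $k=0$ are essentially right (though for general $N$ the identity you need is not \eqref{q-tr} itself but its $N$-analogue, which in the paper comes out of the modular property of the Haar state together with $K_{2\rho}\az\Psi_N=\sigma^N(K_{2\rho})^t\Psi_N$ and $\Psi_N^\dag\Psi_N=1$). Your vanishing mechanism for $k\ge 2$ is morally the same as the paper's, but the paper packages it without ever opening indices: from the Leibniz rule applied to $P_N^2=P_N$ one gets $(\deb P_N)P_N=(1-P_N)\deb P_N$, hence $(\deb P_N\dwprod\deb P_N)\Psi_N=P_N(\deb P_N\dwprod\deb P_N)\Psi_N=\Psi_N(\nabla^{\deb}_N)^2=0$ by flatness of $\nabla^{\deb}_N$. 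This avoids your factorization lemma and contraction identity altogether, and those two statements are not free: the entries $\psi^N_J$ are not in $\OO(\CP^n_q)$, and $\deb=\sum_i\mL_{\hat K X_i}$ obeys only a twisted Leibniz rule on $\OO(SU_q(n+1))$, so $\deb p_{JJ'}$ factorizes only up to $K$-twists (in the paper this shows up as $\deb(\Psi_N\Psi_N^\dag)=q^{-1}(\Psi_N\za F_n\cdots F_i)(\Psi_N^\dag\za K_n\cdots K_i)$, the $K$-factors being absorbed into a power of $q$ only because the components of $\Psi_N^\dag$ are weight vectors). You flag this, but as written your two key lemmas are assertions, not proofs.

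The genuine gap is $k=1$: the value $q^{-n-3}[n][N]$ --- the entire quantitative content of the proposition, and what makes the classes $[(P'_N,\sigma^N)]$ pairwise distinct --- is nowhere derived; you only say the evaluation of $\sum_i(\xi_{J_1})_i^*(\xi_{J_0})_i$ ``should'' produce it. Moreover the route you sketch (Casimir eigenvalues $[m][m+n]$ on all the summands $V_{(m+N,0,\ldots,0,m)}$ of $\Gamma_N$, followed by a $q$-geometric summation over $m$) does not match the structure of the computation. In the paper, after using \eqref{eq:expconn}, unitarity of the $\mL$-action and $F_i^*=E_i$, everything reduces to the single identity $\Psi_N\za F_n\cdots F_iE_i\cdots E_n=[N]\,\Psi_N$, proved by the commutation relations together with $\Psi_N\za E_i=0$, $\Psi_N\za K_i=\Psi_N$ for $i<n$, and $\Psi_N\za K_n$ being a fixed power of $q$ times $\Psi_N$ --- a finite weight computation on $\Psi_N$ itself, with no decomposition of $\Gamma_N$ and no sum over $m$; the constant then comes from the elementary sum $q^{-2}\sum_{i=1}^nq^{-2(n-i+1)}=q^{-n-3}[n]$. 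So your proposal, modulo the twisted-Leibniz bookkeeping, covers $k=0$ and $k\ge 2$, but the $k=1$ formula remains unproven and your plan for it would need to be reorganized along these lines.
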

\begin{proof}
Recall that $P'_N=R_NP_NR_N^{-1}$, with $R_N=\sigma^N(K_{2\rho})^t$
and $P_N=\Psi_N\Psi_N^\dag$ the projections in \S\ref{sec:3.2}.
Since $R_N$ is a constant matrix and the trace is cyclic:
$$
\inner{[\tau_k],[(P'_N,\sigma^N)]}=
\tau_k\Big(
\tr\bigl(P_N\dotimes P_N\dotimes\ldots\dotimes P_N\sigma(K_{2\rho}^{-1})^t\bigr)
\Big) \;,
$$
so that there is no difference in using $P_N$ or $P_N'$.
Since $P_N^*=P_N$:
\begin{multline*}
\inner{[\tau_k],[(P'_N,\sigma^N)]} \\
=h\Big(
\tr\bigl(P_N\big(\deb P_N\dwprod\ldots\dwprod\deb P_N\big)^*
\big(\deb P_N\dwprod\ldots\dwprod\deb P_N\big)
\sigma(K_{2\rho}^{-1})^t\big)\Big) \;,
\end{multline*}
with $\dwprod$ the composition of $\wprod$ with
matrix multiplication. Using $P_N=\Psi_N\Psi_N^\dag$,
of \cite[eq.~(3.4)]{DD09}, cyclicity of the trace,
$\Psi_N\za K_{2\rho}=\Psi_N\za K_n^{2n}=q^{-nN}\Psi_N$
and $K_{2\rho}\az\Psi_N=\sigma(K_{2\rho})^t\Psi_N$
--- cf.~\eqref{eq:sigmaN} ---
we get
\begin{multline*}
\inner{[\tau_k],[(P'_N,\sigma^N)]} \\
=q^{-nN}h\big(
\Psi_N^\dag
\big(\deb P_N\dwprod\ldots\dwprod\deb P_N\big)^*
\big(\deb P_N\dwprod\ldots\dwprod\deb P_N\big)
\Psi_N
\big)
 \;.
\end{multline*}
From the Leibniz rule and
$\Psi_N^\dag=\Psi_N^\dag P_N$ it follows
$(\deb P_N)P_N=(1-P_N)\deb P_N$.
From this and $\Psi_N=P_N\Psi_N$ it follows
$$
(\deb P_N\dwprod\deb P_N)\Psi_N
=P_N(\deb P_N\dwprod\deb P_N)\Psi_N
=\Psi_N(\nabla^{\deb}_N)^2=0 \;,
$$
since $\nabla^{\deb}_N$ is flat.
Thus, if $k\geq 2$ the pairing
$\inner{[\tau_k],[(P'_N,\sigma^N)]}$
is zero.

\noindent
If $k=0$, $\inner{[\tau_0],[(P'_N,\sigma^N)]}=
h(\Psi_N^\dag\Psi_N)=h(1)=1$.

\noindent
The remaining case is $k=1$. Now,
$$
\inner{[\tau_1],[(P'_N,\sigma^N)]}
=q^{-N}\sum_{i=1}^nh\big(
\Psi_N^\dag(\deb P_N)^*_i(\deb P_N)_i\Psi_N
\big)
 \;,
$$
where $\deb P_N$ is a matrix with entries $(0,1)$-forms,
i.e.~vectors with $n$ components labelled by $i=1,\ldots,n$ .
From \eqref{eq:expconn} (recall that $\deb=\nabla^{\deb}_{\!0}$)
we get:
$$
(\deb P_N)_i=q^{-1}P_N\za F_nF_{n-1}\ldots F_i \;.
$$
The case $N<0$ being similar, let us take $N\geq 0$. Then $\Psi_n^\dag\za F_n=0$, since 
$z_i\za F_n=0$, and $\eta\za F_i=\eta\za E_i=0$ for $\eta\in\Gamma_N$ and $i=1,\ldots,n-1$. Therefore
\begin{align*}
\deb(\Psi_N\Psi_N^\dag) &=
q^{-1}(\Psi_N\za F_nF_{n-1}\ldots F_i)(\Psi_N^\dag\za K_nK_{n-1}\ldots K_i)
\\
&=q^{\frac{N}{2}-1}(\Psi_N\za F_nF_{n-1}\ldots F_i)\Psi_N^\dag \;.
\end{align*}
In turn, all of this yields:
\begin{align*}
\inner{[\tau_1],[(P'_N,\sigma^N)]}
&=q^{-2}\sum_{i=1}^nh\big((\Psi_N\za F_nF_{n-1}\ldots F_i)^*(\Psi_N\za F_nF_{n-1}\ldots F_i)\big) \\
&=q^{-2}\sum_{i=1}^nq^{-2(n-i+1)}h\big((\mL_{F_i\ldots F_n}\Psi_N)^*(\mL_{F_i\ldots F_n}\Psi_N)\big) \;.
\end{align*}
From unitarity of the $\mL$ action:
\begin{align*}
\inner{[\tau_1],[(P'_N,\sigma^N)]}
&=q^{-2}\sum_{i=1}^nq^{-2(n-i+1)}h\big(\Psi_N^\dag(\mL_{F_i\ldots F_n}^*\mL_{F_i\ldots F_n}\Psi_N)\big) \\
&=q^{-2}\sum_{i=1}^nq^{-2(n-i+1)}h\big(\Psi_N^\dag(\mL_{E_n\ldots E_iF_i\ldots F_n}\Psi_N)\big) \\
&=q^{-2}\sum_{i=1}^nq^{-2(n-i+1)}h\big(\Psi_N^\dag(\Psi_N\za F_n\ldots F_iE_i\ldots E_n)\big) \;.
\end{align*}
Since $\Psi_N\za E_i=0$ for $i=1,\ldots,n$ and $[E_i,F_j]=0$ when $i\neq j$, we have:
\begin{align*}
\Psi_N\za F_n\ldots F_i E_i\ldots E_n
&=\Psi_N\za F_n\ldots F_{i+1}[F_i,E_i]E_{i+1}\ldots E_n \\
&=\Psi_N\za F_n\ldots F_{i+1}\frac{K_i^{-2}-K_i^2}{q-q^{-1}}E_{i+1}\ldots E_n \\
&=\Psi_N\za \frac{qK_i^{-2}-q^{-1}K_i^2}{q-q^{-1}}F_n\ldots F_{i+1}E_{i+1}\ldots E_n \;,
\end{align*}
having used in the last equality $K_iF_{i+1}K_i^{-1}=q^{\frac{1}{2}}F_{i+1}$
and $K_iF_jK_i^{-1}=F_j$ for $j>i+1$ (the defining relations of $\Uq{su}{n+1}$). 
From $\Psi_N\za K_i=\Psi_N$, if $i<n$ we get:
$$
\Psi_N\za F_n\ldots F_iE_i\ldots E_n=\Psi_N\za F_n\ldots F_{i+1}E_{i+1}\ldots E_n \;,
$$
and by induction on $i$:
\begin{align*}
\Psi_N\za F_n\ldots F_i E_i\ldots E_n&=\Psi_N\za F_nE_n=\Psi_N\za [F_n,E_n] \\
&=\Psi_N\za \frac{K_n^{-2}-K_n^2}{q-q^{-1}}=[N]\Psi_N \;.
\end{align*}
Therefore,
\begin{align*}
\inner{[\tau_1],[(P'_N,\sigma^N)]}
&=q^{-2}[N]\sum_{i=1}^nq^{-2(n-i+1)}h(\Psi_N^\dag\Psi_N) \\
& 
=q^{-2}[N]\sum_{i=1}^nq^{-2(n-i+1)}
=q^{-n-3}[n][N]
 \;.
\end{align*}
This concludes the proof.\cvd
\end{proof}

As a consequence of previous proposition, the idempotents $P'_N$ represent
distinct elements in equivariant K-theory, since $\inner{[\tau_1],[(P'_N,\sigma^N)]}
=\inner{[\tau_1],[(P'_{M},\sigma^{M})]}$ if and only if $N=M$.
This is consistent with \cite[Prop.~3.8]{Wag09}, where it is shown that these
idempotents generates \smash[t]{$K_0^{\Uq{su}{2}}(\OO(\CP^1_q))$}, that
is an infinite-dimensional free abelian group.

\section{Monopoles and instantons on $\CP^2_q$}\label{sec:6}

A review of the geometry of $\CP^2_q$ is in \cite{DL08}.
The full \mbox{$*$-calculus} was given in \cite{DL09b}. We now review some results
from \cite{DL09b} and \cite{DL11} on monopoles and instantons as solutions
of anti-self-duality equations.

\subsection{The Hodge star operator on $\CP^2_q$}
On a orientable Riemannian manifold $M$ of (real) dimension $n$, there is
a bimodule isomorphism $\Omega^k(M)\to\Omega^{n-k}(M)$ called the Hodge star operator:
this is an isometry and has square $\pm 1$. It is usually defined
in local coordinates, using the completely antisymmetric tensor
and the determinant of the metric. With the Hodge star, one defines an
inner product on the space of forms. 

In the noncommutative case (lacking local coordinated), we proceed
in the opposite way: we have a canonical Hermitian structure on forms,
and we use this to define a map $*_H$ that we call ``Hodge star operator''.
We then show that on $\CP^2_q$ this has the correct properties and
the correct $q\to 1$ limit.

The starting point to define $*_H$ is a differential \mbox{$*$-calculus} $(\Omega^\bullet(\A),\dd)$
over a \mbox{$*$-algebra} $\A$.  To have a bimodule isomorphism
$\Omega^k(\A)\to\Omega^{n-k}(\A)$, for some $n$ that we call ``dimension'' of the
calculus, a necessary condition is that $\Omega^n(\A)$ is a free $\A$-bimodule of rank $1$, whose base
element we denote by $\vol$. This is analogue to the condition that the
space is orientable.

We also assume that each $\Omega^k(\A)$, as a right module, has an Hermitian
structure $(\,\cdot, \,\cdot):\Omega^k(\A)\times\Omega^k(\A)\to\A$ 
and is self-dual\footnote{Amongst the many uses of this term, here we mean that the Hermitian structure yields also all homomorphisms of $\Omega^k(\A)$, i.e.~given any right $\A$-module homomorphism $\phi: \Omega^k(\A) \to \A$ there is 
$\eta\in\Omega^k(\A)$ so that $\phi(\cdot)=(\eta, \,\cdot\,)$. }.
Under this assumption, it is possible to prove
there exists a right $\A$-module map $*_H:\Omega^k(\A)\to\Omega^{n-k}(\A)$
uniquely defined by
$$
(*_H\omega_1,\omega_2)\vol=\omega_1^*\, \omega_2
$$
for all $\omega_1\in\Omega^k(\A)$ and $\omega_2\in\Omega^{n-k}(\A)$ (the product
on the right hand side is the product in $\Omega^\bullet(\A)$).
In particular, finitely generated projective modules with the canonical
Hermitian structure are self-dual; in addition, for them it is possible to
prove that the map $*_H$ is also a left $\A$-module map.  
More details on this topic will be reported in \cite{DL11}.

If $n=4$, $\Omega^2(\A)$ is the direct sum of the eigenspaces of $*_H$ corresponding
to the eigenvalues $+1$ and $-1$, called spaces of selfdual, respespectively anti-selfdual
(SD or ASD, for short) $2$-forms.

On $\Omega^2(\CP^2_q)$ the Hodge star operator is given explicitly in \cite{DL09b,DL11} in a way that
we briefly describe. Similarly to the $q=1$ case,
$$
\Omega^{1,1}(\CP^2_q)=\Omega_0^{1,1}(\CP^2_q)\oplus (\Omega_0^{1,1}(\CP^2_q))^\perp
$$
is the (orthogonal) direct sum of a rank $1$ free $\OO(\CP^2_q)$-bimodule
$\Omega_0^{1,1}(\CP^2_q)$ and its orthogonal complement.
A basis element for $\Omega_0^{1,1}(\CP^2_q)$ is given by the
$\Uq{su}{3}$-invariant $2$-form:
$$
\kahl:=\sum\nolimits_{ijk}q^{2i}p_{ij}\dd p_{jk}\wprod\dd p_{ki}=
\sum\nolimits_{ij}q^{2i}\de p_{ij}\wprod\deb p_{ji} \;,
$$
where $p_{ij}=z_i^*z_j$ are the generators of $\OO(\CP^2_q)$
(and we recall that one passes to the notations of \cite{DL09b,DL11}
with the replacement $z_i\to z_{3-i}$). For $q=1$, modulo a
proportionality constant, this is just the K{\"a}hler form
associated to the Fubini-Study metric \cite{DL11}.

There are two possible choices of orientation for $\CP^2_q$, and the corresponding Hodge star operators
differ by a sign. On $\CP^2_q$ with \emph{standard orientation}, a $2$-form is ASD if and only if
it belongs to $(\Omega_0^{1,1}(\CP^2_q))^\perp$
(compare with the classical situation in \cite{Don84}).
On $\CP^2_q$ with \emph{reversed orientation},
that we denote by $\overline{\CP^2_q}$,
a $2$-form is ASD if and only if it belongs to 
$\Omega^{0,2}(\CP^2_q)\oplus\Omega_0^{1,1}(\CP^2_q)\oplus\Omega^{2,0}(\CP^2_q)$;
in particular, the K{\"a}hler form is ASD
(for the classical situation compare with \cite{DK90}).

\subsection{ASD connections and Laplacians}

Using the isomorphism $\Gamma_{-N}\simeq P_N \OO(\CP^2_q)^{k_{N,2}}$ (with $k_{N,2}=\binom{|N|+2}{2}$)
discussed in \S\ref{sec:3.2}, one moves the Grassmannian connection of 
$\E=P_N\OO(\CP^2_q)^{k_{N,2}}$ to $\Gamma_{-N}$.
This yields a connection $\nabla_{\!N}$ given on $\eta\in\Gamma_{-N}$
by
\begin{equation}\label{grconn}
\nabla_{\!N}\eta=\Psi_N^\dag\dd(\Psi_N\eta) \;.
\end{equation}
Its curvature is the operator of left multiplication by the
$2$-form $\nabla_{\!N}^2$ in $\Omega^2(\CP^2_q)$ given by
\begin{equation}\label{grcurv}
\nabla_{\!N}^2=\Psi_N^\dag(\dd P_N)^2\Psi_N \;.
\end{equation}

In \S\ref{sec:3.1} we
saw that Fredholm modules are a good replacement of Chern characters,
as they are used to construct maps $K_0\to\Z$ that are the analogue
of characteristic classes (also called Chern-Connes characters
in K-homology). 

On the other hand on $\CP^2_q$ one can also mimic
the construction of the usual Chern characters by associating to
finitely generated projective modules (sections of noncommutative
vector bundles) suitable integrals of powers of the Grassmannian
connection (in fact of any connection). 
It appears that the correct framework for this is 
equivariant K-theory, as these integrals give numbers (that are not integer valued) 
depending only on the K-theory class of equivariant projective modules.
These maps $K_0^{\U}\to\R$ are described in \cite{DL09b}.

A connection on a bimodule will be called ASD if its curvature
is a right-module endomorphism with coefficients in \emph{anti-selfdual}
$2$-forms.

In \cite{DL09b} we studied $U(1)$-monopoles on $\CP^2_q$, i.e.~ASD connections on
the (line bundle) modules $\Gamma_{-N}$.
The connection $\nabla_{\!N}$ on $\Gamma_{-N}$ is the one 
in \eqref{grconn}.  The corresponding curvature 
$\nabla_{\!N}^2$, as in \eqref{grcurv}, is a
scalar $2$-form since right-module endomorphisms are given by $\mathrm{End}_{\OO(\CP^2_q)}(\Gamma_{-N})\simeq\OO(\CP^2_q)$.
We showed that $\nabla_{\!N}$ is left $\Uq{su}{3}$-invariant, i.e.~it
commutes with the left action of $\Uq{su}{3}$. From this, it follows
that the curvature is an invariant $2$-form, and then it is ASD
on $\overline{\CP^2_q}$. Explicitly, one has
$$
\nabla_{\!N}^2=q^{N-1}[N]\,\kahl \;.
$$
In \cite{DL11} we are continuing the project and describe $SU_q(2)$ one-instantons on $\CP^2_q$, i.e.~ASD connections on a `rank $2$ homogeneous
vector bundle' with first Chern number equal to $0$ and second Chern number equal to $1$. Following Donaldson \cite[Example 4.1.2]{DK90} we choose the reverse orientation on $\CP_q^2$. The ASD condition can be reformulated as a system of
finite-difference equations (differential equations for $q=1$, while 
derivatives are replaced by $q$-derivatives  when $q\neq 1$), and provide a family of solutions
`parametrized' by a non-commutative space that is a cone over $\CP^2_q$. 

Given the monopole connection $\nabla_{\!N}$ on $\Gamma_{-N}$, one can also 
define the associated Laplacian $\Delta_N:=(\nabla_{\!N})^*\nabla_{\!N}$,
where $(\nabla_{\!N})^*$ 
is the adjoint of $\nabla_{\!N}$.
The eigenvalues $\{\lambda_{k,N}\}_{k\in\N}$ of $\Delta_N$, explicitly computed in \cite{DL09b}, are given by:
\begin{align*}
\lambda_{k,N}&=
(1+q^{-3})[k][k+N+2]+[2][N] &&\mathrm{if}\;N\geq 0\;,\\
\lambda_{k,N}&=
(1+q^{-3})[k+2][k-N]+[2][N] &&\mathrm{if}\;N<0\;.
\end{align*}
We point out that for $q=1$,
$\lambda_{k,N}=2(k^2+kN+2k+N)=\lambda_{k,-N}$ for any $N\geq 0$.
On the other hand for $q\neq 1$,
the spectrum of $\Delta_N$ is not symmetric under
the exchange $N \leftrightarrow -N$;
the quantization removes some degeneracies.
A similar phenomenon was observed in \cite{LRZ07}
for $\CP^1_q$. There is a simple relation, 
$$
\lambda_{k,N}-\lambda_{k,-N}=(1-q^{-3})[2][N] \;, \qquad \textup{for all} \quad N\geq 0 \;.
$$

\appendix

\section{On Chern characters and Fredholm modules}\label{app}

In Prop.~\eqref{q-ind} we gave maps
$$
\varphi_k:=\inner{[F_k],\,.\,}:\;K_0(\OO(\CP^n_q))\to\Z \;,
$$
that, when $K_0(\OO(\CP^n_q))$ is identified with $\Z^{n+1}$ using the generators
$[P_0]$, $[P_{-1}],..., [P_{-n}]$, are morphisms of abelian groups $\Z^{n+1}\to\Z$.

For $q=1$, using the embeddings $\imath:\CP^k\to\CP^n$ one has has maps
$$
\mathrm{Ch}_k:K^0(\CP^n)\to \Q \;,\qquad
\mathrm{Ch}_k(\mathcal{V})=\int_{\CP^k}\imath^*\mathrm{ch}_k(\mathcal{V}) \;,
$$
where $\mathcal{V}\to\CP^n$ is a vector bundle, and $\mathrm{ch}_k(\mathcal{V})$
its $k$-th Chern character.
Similarly to above, one can identify $K^0(\CP^n)$
with $\Z^{n+1}$ using corresponding line bundles $L_0,L_{-1},\ldots,L_{-n}$,
where $L_0=\CP^n\times\C$ is the trivial line bundle, $L_{-1}\to\CP^n$ is the dual of the tautological bundle
and $L_{-N}
=(L_{-1})^{\otimes n}$. We compare the maps $\varphi_k$
and $\mathrm{Ch}_k$ as morphisms of abelian groups $\Z^{n+1}\to\Q$.
From Prop.~\eqref{q-ind} we know $\varphi_k(P_{-N})=\tbinom{N}{k}$. 
We need to compute $\mathrm{Ch}_k(L_{-N})$. 

For a line bundle $L$, the total Chern character is $\mathrm{ch}(L)=e^{c_1(L)}$,
being the first Chern class $c_1(L)$ the only non-zero such a class for a line bundle.
Since $\mathrm{ch}(L\otimes L')=\mathrm{ch}(L)\mathrm{ch}(L')$, we have
$\mathrm{ch}(L_{-N})=\mathrm{ch}(L_{-1})^N=e^{Nc_1(L_{-1})}$
and $\mathrm{ch}_k(L_{-N})=\frac{N^k}{k!}c_1(L_{-1})^k$.
By \cite[Lemma 2.3.1]{Gil95}, $x:=\imath^*c_1(L_{-1})$ is exactly the
first Chern number of the analogous bundle $L_{-1}$ on $\CP^k$, and
the integral is normalized such that
$\int_{\CP^k}x^k=1$. Therefore:
$$
\mathrm{Ch}_k(L_{-N})=\frac{1}{k!}N^k=\frac{1}{k!}\sum_{j=0}^k\,j!\,{k\brace j}
\binom{N}{j}
 \;,
$$
where ${k\brace j}$ are the Stirling numbers of the second kind \cite{GKP94}. Hence
$$
\mathrm{Ch}_k=\frac{1}{k!}\sum_{j=0}^k{k\brace j}j!\,\varphi_j \;,
$$
as maps $\Z^{n+1}\to \Q$. In particular,
$$
\mathrm{Ch}_0=\varphi_0 \;,\qquad
\mathrm{Ch}_1=\varphi_1 \;,\qquad
\mathrm{Ch}_2=\varphi_2+\tfrac{1}{2}\varphi_1 \;,
$$
with their inverses: $\varphi_0=\mathrm{Ch}_0$, $\varphi_1=\mathrm{Ch}_1$ 
and $\varphi_2=\mathrm{Ch}_2-\frac{1}{2}\mathrm{Ch}_1$, the latter combination always being integer valued. These could be named the `rank', `monopole number' and `instanton number' of the bundle, respectively.


\end{document}